\newtheorem{theorem}{Theorem}[section]
\newtheorem{lemma}[theorem]{Lemma}
\newtheorem{proposition}[theorem]{Proposition}
\newtheorem{remark}[theorem]{Remark}
\def\H{\mathcal H}
\def\R{\mathbb R}
\def\N{\mathbb N}
\def\dist{\hbox{dist}}
\def\e{\varepsilon}
\def\s{\sigma}
\def\S{\Sigma}
\def\vphi{\varphi}
\def\Div{{\rm div}\,}
\def\l{\lambda}
\def\g{\gamma}
\def\k{\kappa}
\def\Om{\Omega}
\def\de{\delta}
\def\Id{{\rm Id}}
\def\spt{{\rm spt}}
\def\pa{\partial}
\def\trace{{\rm tr}}
\def\E{\mathcal{E}}
\def\F{\mathcal{F}}
\newcommand{\hd}{\mathrm{hd}}
\renewcommand{\>}{\rangle}
\renewcommand{\a}{\alpha}
\renewcommand{\b}{\beta}
\newcommand{\Lip}{{\rm Lip}}
\renewcommand{\Div}{{\rm div \,}}
\newcommand{\ov}{\overline}
\newcommand{\diam}{\mathrm{diam}}
\newcommand{\cc}{\subset\subset}
\newcommand{\C}{{\mathscr{C}}}
\def\H{\mathcal H}
\def\de{\delta}
\def\l{{\lambda}}
\newcommand{\vertiii}[1]{{\left\vert\kern-0.25ex\left\vert\kern-0.25ex\left\vert #1
    \right\vert\kern-0.25ex\right\vert\kern-0.25ex\right\vert}}
\def\S{\mathbb{S}}
\def\trace{{\rm tr}}
\newcommand{\DIVV}{{\rm div}}
\newcommand{\na}{{\nabla}}
\def\>{{\rangle}}
\numberwithin{equation}{section}
\title[$L^2$-bubbling into Wulff shapes]
{Bubbling
with $L^2$-almost constant mean curvature
\\and an Alexandrov-type theorem for crystals}
\author[Delgadino]{M. G. Delgadino}
\address[Matias Delgadino]{
\newline \indent Abdus Salam International Center for Theoretical Physics,
\newline \indent Strada Costiera 11, I-34151, Trieste, Italy.}
\email{mdelgadi@ictp.it}
\author[Maggi]{F. Maggi}
\address[Francesco Maggi]{
\newline \indent Abdus Salam International Center for Theoretical Physics,
\newline \indent Strada Costiera 11, I-34151, Trieste, Italy.
\newline \indent On leave from the University of Texas at Austin}
\email{fmaggi@ictp.it}
\author[Mihaila]{C. Mihaila}
\address[Cornelia Mihaila]{ \newline
\indent  Department of Mathematics, The University of Texas at Austin,  \newline
\indent 2515 Speedway Stop C1200, Austin, TX 78712, USA}
\email{cmihaila@math.utexas.edu}
\author[Neumayer]{R. Neumayer}
\address[Robin Neumayer]{ \newline
\indent  Department of Mathematics, The University of Texas at Austin,  \newline
\indent 2515 Speedway Stop C1200, Austin, TX 78712, USA}
\email{rneumayer@math.utexas.edu}
\begin{document}

\maketitle

\begin{abstract}
  {\rm A compactness theorem for volume-constrained almost-critical points of elliptic integrands is proven. The result is new even for the area functional, as almost-criticality is measured in an integral rather than in a uniform sense. Two main applications of the compactness theorem are discussed. First, we obtain a description of critical points/local minimizers of elliptic energies interacting with a confinement potential. Second, we prove an Alexandrov-type theorem for crystalline isoperimetric problems.}
\end{abstract}

\section{Introduction} \subsection{Overview}\label{section overview} The study of critical points in geometric variational problems often calls for the understanding of bubbling/concentration phenomena. Classical examples are discussed in the seminal papers of Brezis-Coron \cite{breziscoron84} and Struwe \cite{struwe84}, where the authors investigate immersed disks with almost-constant mean curvature and conformally flat metrics with almost-constant scalar curvature. As illustrated by the monographs \cite{struweBOOK,hebeyBOOK}, this kind of result plays an important role in various contexts.

Here we are interested in sets with almost-constant mean curvature, that is to say, in sets that are close to being critical in isoperimetric problems. Such sets arise in various contexts of physical and geometric importance, like capillarity theory and mean curvature flows. Depending on the application one has in mind, different ways of measuring almost-criticality are appropriate. For example, in the study of capillarity problems, one is naturally led to consider surfaces whose mean curvature is uniformly close to a constant. In geometric applications, we have a more complicated situation, as uniform proximity to constant mean curvature should be replaced by $L^2$-proximity.

Our starting point is the paper \cite{ciraolomaggi2017}, where, as illustrated in more detail below (Section \ref{section ciraolomaggi}), Ciraolo and the second author obtained a compactness result for boundaries whose mean curvature is uniformly close to a constant. In our main result, Theorem \ref{thm main1} below, we obtain two critical improvements of the compactness theorem from \cite{ciraolomaggi2017}, which require a substantial rethinking of many technical aspects of the original argument.

A first improvement consists of replacing uniform proximity with $L^2$-proximity. The main difficulty here is of course that uniform proximity to constant mean curvature, unlike $L^2$-proximity, carries information on the size of the mean curvature oscillation at {\it every} boundary point, and thus allows one to exploit powerful sliding/maximum principle arguments.

A second major improvement consists of replacing the area functional with a surface energy for a generic elliptic integrand. Elliptic integrands model anisotropic surface tensions and are thus of importance in numerous applications. From the mathematical viewpoint, the area functional is quite exceptional among elliptic integrands, and there are various steps in the argument of \cite{ciraolomaggi2017} where this fact was exploited.

Referring to Section \ref{section elliptic compactness} for more comments on the proof of Theorem \ref{thm main1}, we now discuss some applications of physical and geometric interest.

We start with Theorem \ref{thm locmin}, where we obtain a description of critical points and local minimizers in elliptic capillarity problems. This theorem is stated in Section \ref{section local min}, where we also provide additional context on this type of problem.

In Section \ref{section weak alexandrov}, we state Theorem \ref{thm main}, an extension of Theorem \ref{thm main1} to sequences of almost-critical points corresponding to elliptic integrands with degenerating ellipticity. The latter property allows these elliptic integrands to converge to an arbitrary (i.e., possibly non-smooth and non-elliptic) convex integrand, and our result proves convergence of almost-critical points (with sufficiently fast convergence of the first variation to a constant) to (possibly multiple copies of) the Wulff shape of the limit integrand. We propose an interpretation of this result as a suitable formulation of Alexandrov's theorem for generic anisotropic energies. Let us recall that for smooth, elliptic anisotropic energies one has a pointwise notion of mean curvature for which an exact analog of the classical Alexandrov's theorem holds \cite{HeLiMaGe09}. The situation is quite different for generic anisotropic energies, like crystalline energies, as in those cases the first variation of the energy does not even define a linear functional on the space of variations. In the proposed interpretation, we circumvent these difficulties by defining critical points of generic anisotropic problems as the accumulation points of almost-critical points of smooth elliptic anisotropic problems.

\subsection{Compactness in the Euclidean case}\label{section ciraolomaggi} We start by recalling the situation in the basic Euclidean case. The starting point is {\it Alexandrov's theorem}: if $\Om$ is a smooth bounded connected open set with constant mean curvature, then $\Om$ is a ball of radius $(n+1)|\Om|/P(\Om)=n/H_\Om$. Here, $|\Om|$ and $P(\Om)=\H^n(\pa\Om)$ are the volume and the perimeter of $\Om$, while $H_\Om$ is the scalar mean curvature of $\Om$ with respect to the outer unit normal $\nu_\Om$ to $\Om$, with the convention that $H_{B_r(x)}=n/r$ if $B_r(x)$ is the ball of radius $r$ in $\R^{n+1}$ centered at a point $x$. In \cite{ciraolomaggi2017} the {\it Alexandrov's deficit} $\de^*(\Om)$ of $\Om$
\begin{equation}
  \label{delta omega}
\de^*(\Om)=\Big\|\frac{H_\Om}{H_\Om^0}-1\Big\|_{C^0(\pa\Om)}\qquad H^0_\Om=\frac{n\,P(\Om)}{(n+1)|\Om|}\,.
\end{equation}
is introduced as a measure of how far $\Om$ is from being a critical point in the Euclidean isoperimetric problem. It is then proven that, if $\{\Om_h\}_{h\in\N}$ is a sequence of smooth bounded open sets in $\R^{n+1}$ normalized to have $H_\Om^0 =n$ and satisfying, for some $L\in\N$ and $\s\in(0,1)$,
\[
P(\Om_h)\le (L+\s)\, P(B_1)\qquad\forall h\in\N\,
\]
and if
\begin{equation}
  \label{small def intro}
  \lim_{h\to\infty}\de^*(\Om_h)=0\,,
\end{equation}
then there exists an open set $\Om$ consisting of the union of at most $L$ disjoint balls of radius one such that\begin{equation}
  \label{due}
  \lim_{h\to\infty}\big|P(\Om_h)-P(\Om)\big|+|\Om_h\Delta\Om|=0\,.
\end{equation}
Allard's monotonicity formula \cite[Section 17]{SimonLN} can then be exploited to deduce Hausdorff convergence (therefore, if the sets $\Om_h$ are connected, then the balls in $\Om$ are mutually tangent). Then, by exploiting Allard's regularity theorem \cite[Section 23]{SimonLN} and a calibration-type argument, one obtains the $C^{1,\a}$-convergence of $\pa\Om_h$ to $\pa\Om$ away from the tangency points of the limit balls. A quantitative analysis is also possible, both in the bubbling case (with non-sharp decay rates, see \cite{ciraolomaggi2017}) and in the one-bubble case $L=1$ (with sharp decay rates, see \cite{krummelmaggi}).

This type of compactness result is crucial for addressing the shape of critical points in capillarity problems. To illustrate this point, consider the capillarity energy
\begin{equation}
  \label{capillarity energy}
  P(\Om)+\int_\Om g(x)\,dx\,,\qquad |\Om|=v\,,
\end{equation}
of a liquid droplet occupying a region $\Om\subset\R^{n+1}$ of fixed volume $v$ under the action of a confinement potential $g$. If $v$ is suitably small with respect to $g$, then the surface energy $P(\Om)={\rm O}(v^{n/(n+1)})$ dominates over the potential energy term (of order ${\rm O}(v)$). By direct comparison with balls and by quantitative isoperimetry \cite{fuscomaggipratelli,FigalliMaggiPratelliINVENTIONES,CicaleseLeonardi} global minimizers are seen to be $L^1$-close to balls (quantitatively in terms of the size of $v$), and then a variational analysis proves they are actually $C^2$-close and thus convex \cite{FigalliMaggiARMA}. But direct comparison with balls is not available for addressing the shape of critical points, or even of local minimizers, and this is why a compactness theorem like the one proved in \cite{ciraolomaggi2017} is needed to get this analysis started. And indeed, in \cite[Corollary 1.4]{ciraolomaggi2017} it is shown that critical points of \eqref{capillarity energy} are quantitatively close close to compounds of mutually tangent balls with same radii, and that local minimizers are close to single balls, for $m$ small.

\subsection{The anisotropic setting and the elliptic compactness theorem}\label{section elliptic compactness}
In various situations of physical and geometric interest (see, e.g., the survey paper \cite{taylor78}) one is led to consider energies like \eqref{capillarity energy} with $P(\Om)$ replaced by an anisotropic surface energy of the form
\[
\F(\Om)=\int_{\pa\Om}F(\nu_\Om)\,d\H^n\,.
\]
Here $F:\S^n\to(0,\infty)$ is a {\it convex integrand}: namely, the one-homogenous extension of $F$ is convex on $\R^{n+1}$. As was proven in  \cite{taylor_roma_73,taylorstanford75,fonseca_wulff_rev,fonsecamuller_wulff,brothersmorgan}, the isoperimetric problem for $\F$ is uniquely solved by translations and scalings of the {\it Wulff shape $K_F$ of $F$}.
\begin{equation}
  \label{wulff shape}
  K_F=\bigcap_{\nu\in\S^n}\Big\{x\in\R^{n+1}:x\cdot\nu<F(\nu)\Big\}\,.
\end{equation}
This translates into the {\it Wulff inequality},
\begin{equation}
  \label{wulff inequality}
  \F(\Om)\ge (n+1)\,|K_F|^{1/(n+1)}\,|\Om|^{n/(n+1)}\qquad 0<|\Om|<\infty\,,
\end{equation}
where the right-hand side equals $\F(r\,K_F)$ for $r=(|\Om|/|K_F|)^{1/(n+1)}$ and where equality holds if and only if $\Om=x+r\,K_F$ for some $x\in\R^{n+1}$. The Wulff shape is always a bounded open convex set containing the origin; and, conversely, every bounded open convex set containing the origin is the Wulff shape of some $F$. Of particular interest is the case when $F$ is a {\it smooth elliptic integrand}, that is, $F\in C^\infty(\S^n;(0,\infty))$, and there exist constants $0<\l\le\Lambda<\infty$ such that, for every $\nu\in\S^n$,
\begin{equation}
  \label{elliptic integrand}
  \l\,\Id\le \nabla^2F(\nu)\le\Lambda\,\Id\qquad\mbox{on $\nu^\perp=T_\nu\S^n$}\,.
\end{equation}
In this setting one has a natural anisotropic extension of the notion of mean curvature. More precisely, the anisotropic mean curvature $H_\Om^F:\pa\Om\to\R$ of a set $\Om \subset \R^{n+1}$ with boundary of class $C^2$ is defined by
\begin{equation}
  \label{anisotropic mean curvature}
  H_\Om^F= \DIVV^{\pa\Om}\big(\na F\circ \nu_\Om\big)=\trace(\nabla^2F(\nu_\Om)\,\nabla\nu_\Om)\,,
\end{equation}
where $\Div^{\pa\Om}$ denotes the tangential divergence along $\pa\Om$, and $\nabla\nu_\Om$ is the second fundamental form of $\pa\Om$ with respect to $\nu_\Om$. For the Wulff shape, $K_F$ one has
\[
H^F_{K_F}=n\,.
\]
An anisotropic version of Alexandrov's theorem was shown in \cite{HeLiMaGe09}: if $\Om$ is a bounded smooth connected open set in $\R^{n+1}$ with constant anisotropic mean curvature, then $\Om=x+r\,K_F$ for some $x\in\R^{n+1}$ and $r>0$. In order to formulate a compactness theorem in this setting we introduce the scale invariant quantity
\begin{equation}
  \label{delta omega F star}
  \delta_F(\Om)=\Big(\frac1{\F(\Om)}\,\int_{\pa\Om}\Big|\frac{H_{\Om}^F}{H_{\Om}^{F,0}}-1\Big|^2\,F(\nu_{\Om})\,d\H^n\Big)^{1/2}
  \qquad H^{F,0}_\Om=\frac{n\,\F(\Om)}{(n+1)|\Om|}
\end{equation}
as an anisotropic generalization of \eqref{delta omega}, and then state the following theorem.

\begin{theorem}
  \label{thm main1} Let $F$ be a smooth elliptic integrand; see \eqref{elliptic integrand}, and let $\{\Om_h\}_{h\in\N}$ be a sequence of bounded open sets with smooth boundary normalized to have
  \[
  H_{\Om_h}^{F,0}=n\,.
  \]
  If, for some $L\in\N$, $\s\in(0,1)$, and $\k\in(0,1)$
  \[
   \sup_{h\in\N}\diam(\Om_h)<\infty\,,\qquad \sup_{h\in\N}\F(\Om_h)\le (L+\s)\,\F(K_{F})\,,
   \qquad\mbox{$H^F_{\Om_h}\ge \k\,n$ on $\pa\Om_h$}\,,
  \]
  and if
  \begin{equation}
    \label{small def main theorem}
     \lim_{h\to \infty} \de_{F}(\Om_h)=0\,,
  \end{equation}
  then there exists an open set $\Om$ consisting of the union of at most $L$-many disjoint translations of $K_F$, such that, up to translations and up to extracting subsequences,
  \begin{equation}
    \label{convergence main theorem}
     \lim_{h\to\infty}\big|\F(\Om_h)-\F(\Om)\big|+|\Om_h\Delta\Om|=0\,.
  \end{equation}
\end{theorem}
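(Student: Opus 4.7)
The plan is to combine BV-compactness with the anisotropic Heintze--Karcher inequality and a weak Alexandrov-type rigidity in order to identify the subsequential limit of $\Om_h$ as a disjoint union of translates of $K_F$. Because $F$ is elliptic, $\F$ is comparable to perimeter, so the bounds on $\diam(\Om_h)$ and $\F(\Om_h)$ give uniform BV bounds. After a translation placing each $\Om_h$ in a fixed ball, a subsequence satisfies $\chi_{\Om_h}\to\chi_\Om$ in $L^1(\R^{n+1})$ for some bounded set of finite perimeter $\Om$, and the reduced boundaries converge as integer rectifiable $n$-varifolds.

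To extract information from the $L^2$ almost-criticality, I would exploit the anisotropic Heintze--Karcher inequality of He--Li--Ma--Ge: since $H^F_{\Om_h}\ge \k\,n>0$ and $\pa\Om_h$ is smooth,
\[
\int_{\pa\Om_h}\frac{F(\nu_{\Om_h})}{H^F_{\Om_h}}\,d\H^n \;\ge\; \frac{(n+1)\,|\Om_h|}{n}\;=\;\frac{\F(\Om_h)}{n},
\]
the last equality using the normalization $H^{F,0}_{\Om_h}=n$. Rewriting the excess on the left as
\[
\int_{\pa\Om_h} F(\nu_{\Om_h})\,\frac{n-H^F_{\Om_h}}{n\,H^F_{\Om_h}}\,d\H^n,
\]
applying Cauchy--Schwarz against the measure $F(\nu_{\Om_h})\,d\H^n$, and using $H^F_{\Om_h}\ge \k\,n$, one bounds this excess by $C\,\k^{-1}\,\de_F(\Om_h)\,\F(\Om_h)$. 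Hence the Heintze--Karcher deficit vanishes along the sequence.

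Passing to the limit, the vanishing deficit together with varifold convergence should force $\Om$ to carry a suitable notion of constant distributional anisotropic mean curvature equal to $n$. An anisotropic weak Alexandrov-type theorem for sets of finite perimeter, generalizing the finite-perimeter isotropic rigidity to elliptic integrands, then identifies $\Om$ as a disjoint union of translates of $K_F$, and the bound $\F(\Om_h)\le (L+\s)\F(K_F)$ with $\s\in(0,1)$ forces at most $L$ components. To upgrade $L^1$-convergence to $|\F(\Om_h)-\F(\Om)|\to 0$, I would rule out loss of boundary mass via uniform density estimates extracted from the Heintze--Karcher deficit bound combined with $H^F\ge\k\,n$.

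The main obstacle is the rigidity/identification step. With only $L^2$-control on $H^F_{\Om_h}$, Allard-type regularity does not produce pointwise or $C^{1,\a}$ convergence, and the smooth He--Li--Ma--Ge argument cannot be applied directly to the possibly non-smooth limit $\Om$. The technical heart of the proof is therefore to formulate a weak notion of constant anisotropic mean curvature in the class of sets of finite perimeter and prove the corresponding rigidity for elliptic integrands, together with a careful accounting that prevents the limit from collapsing into a chain of tangent Wulff shapes counted only once. This anisotropic, $L^2$-level generalization of the sliding and maximum-principle tools available in the isotropic $L^\infty$ setting of \cite{ciraolomaggi2017} is what requires the substantial rethinking alluded to in the introduction.
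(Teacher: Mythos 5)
Your opening moves (BV-compactness, uniform perimeter and volume bounds from ellipticity, and the estimate $\eta_F(\Om_h)\le C\,\k^{-1}\,\de_F(\Om_h)$ showing the Heintze--Karcher deficit vanishes) are correct and parallel the paper's Lemma~\ref{lemma magenta}. But from that point on, the proposal is a plan rather than a proof, and the plan relies on a tool that does not exist and that the paper deliberately avoids having to construct.

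Concretely, the step ``vanishing deficit plus varifold convergence should force $\Om$ to carry a suitable notion of constant distributional anisotropic mean curvature equal to $n$, and an anisotropic weak Alexandrov-type theorem for sets of finite perimeter then identifies $\Om$'' is precisely what is not available. First, varifold convergence of $\pa\Om_h$ does not automatically yield convergence of the (anisotropic) first variation: mass can concentrate, the limiting varifold can have higher multiplicity, and it need not coincide with $\pa^*\Om$. Controlling this would itself require density estimates, which you propose to extract from the Heintze--Karcher bound but never actually derive. Second, and more fundamentally, no weak Alexandrov theorem for sets of finite perimeter with constant distributional anisotropic mean curvature is proved in the paper or, to my knowledge, in the literature; the cited He--Li--Ma--Ge result \cite{HeLiMaGe09} requires smoothness. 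You acknowledge this is ``the technical heart of the proof,'' but acknowledging a gap does not fill it: you have reduced the theorem to an unproved (and plausibly harder) statement.

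The paper's actual route is genuinely different and sidesteps the rigidity question entirely. It works with the anisotropic torsion potential $u_h$ solving $\Delta_F u_h=1$ in $\Om_h$, $u_h=0$ on $\pa\Om_h$. Three ingredients do the work: an anisotropic Reilly identity (Proposition~\ref{proposition reilly}); a universal Lipschitz bound $\Lip(u_h)\le 1/(m_F\,H^F_{\Om_h,\inf})$ independent of ellipticity constants (Proposition~\ref{proposition lipschitz estimate}); and the quantitative estimates \eqref{estimate 1}, \eqref{estimate 1b}, \eqref{estimate 2} of Proposition~\ref{proposition hessian bound}, which control $\nabla(\nabla_F u_h)-\Id/(n+1)$ and boundary values of $F(\nabla u_h)$ in terms of $\eta_F$ and $\de_F$. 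The Lipschitz bound gives uniform convergence $u_h\to u$, and the Hessian estimate identifies $u$ on each connected component of $\{u<0\}$ as a Wulff quadratic $(F_*(\cdot-x_j)^2-s_j^2)/2(n+1)$, so that $\{u<0\}$ is a disjoint union of Wulff shapes $x_j+s_jK_F$ of a priori varying radii $s_j$. The equality of all the $s_j$ is \emph{not} obtained from any maximum-principle or rigidity argument but from a family of Pohozaev-type identities (step six of the proof of Theorem~\ref{thm main}): integrating $\Div\big(F_h(\nabla u_h)^{\a+1}\nabla_{F_h}u_h\big)$ and using \eqref{estimate 1b recall} and \eqref{estimate 2 x} yields $\sum_j s_j^{n+1+\a}=\sum_j s_j^{n+2+\a}$ for all $\a>0$, forcing $s_j\equiv 1$. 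This is the mechanism replacing the sliding argument of \cite{ciraolomaggi2017}, and it has no counterpart in your proposal. So while your compactness and deficit estimate are sound, the identification of the limit as a union of unit Wulff shapes is missing, and the suggested alternative route would require building a weak Alexandrov theory whose feasibility is itself unclear.
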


\begin{remark}
  {\rm Setting $\de(\Om)=\de_F(\Om)$ for $F\equiv 1$, and recalling the definition \eqref{delta omega} of $\de^*(\Om)$, we obviously have that $\de(\Om)\le\de^*(\Om)$, with $H_\Om\ge \k\,H^0_\Om$ provided $\de^*(\Om)\le1-\k$. In particular, Theorem \ref{thm main1} contains the fact that \eqref{small def intro} implies \eqref{due}, i.e., the key conclusion of \cite[Theorem 2.4]{ciraolomaggi2017}. As explained in Section \ref{section overview}, passing from the $C^0$ to the $L^2$ deficit is non-trivial because a key argument in the proof of \cite[Theorem 2.4]{ciraolomaggi2017} is a sliding argument based on the maximum principle for the mean curvature operator (see the argument right after \cite[Equation (2.48)]{ciraolomaggi2017}). For this kind of argument to work, it is crucial that whichever the contact point produced in the sliding argument is, the constant mean curvature deficit contains information at that point. This works naturally when using the $C^0$-deficit $\de^*(\Om)$, but it is clearly more delicate for the $L^2$-deficit $\de(\Om)$. We bypass this problem by exploiting the vanishing deficit assumption for obtaining a family of Pohozaev's-type identities of different homogeneities; see in particular step six in the proof of Theorem \ref{thm main}, Section \ref{section proof final}. We also notice that considering the $L^2$-deficit in this kind of problem is not merely done for the sake of generality, but is particularly significant in view of possible applications to the analysis of mean curvature flows.}
\end{remark}

We now describe some additional aspects of the proof of Theorem \ref{thm main1}. As we are going to see, a key tool will be a new potential theoretic proof of the anisotropic Heintze-Karcher inequality, done in the spirit of \cite{Ros}.

Let us first recall that the classical Heintze-Karcher inequality states that if $\Om$ is an open bounded connected set with smooth boundary and positive mean curvature, then
\begin{equation}
  \label{hk inq}
  \int_{\pa\Om}\frac{n}{H_\Om}\,d\H^n\ge(n+1)|\Om|\,,
\end{equation}
with equality if and only if $\Om$ is a ball. This inequality has been exploited by many authors \cite{Ros,montielros,brendlealex} as an effective starting point for proving the classical Alexandrov's theorem and its generalizations to higher order curvatures and to non-Euclidean ambient spaces. The relation is seen, also in the context compactness problems, if one considers the simple inequality
\begin{equation}
  \label{etadeltastar}
  \eta(\Om)\le\de^*(\Om)
\end{equation}
between the Alexandrov's deficit \eqref{delta omega} and the {\it Heintze-Karcher deficit}
\[
\eta(\Om)=1-\frac{(n+1)|\Om|}{\int_{\pa\Om}(n/H_\Om)\,d\H^n}\,.
\]
In particular, \eqref{etadeltastar} says that if $\Om$ has constant mean curvature, then $\Om$ is an equality case in \eqref{hk inq} (and thus a ball).

The compactness result obtained in \cite{ciraolomaggi2017} starts from Ros' proof \cite{Ros} of \eqref{hk inq}, which exploits the celebrated Reilly's identity \cite{reilly} in order to relate the Heintze-Karcher deficit and the torsion potential $u$ of $\Om$, i.e., the unique solution of
\[
\left\{\begin{split}
  \Delta u=1\qquad\mbox{in $\Om$}\,,
  \\
  u=0\qquad\mbox{on $\pa\Om$}\,.
\end{split}\right .
\]
For instance, a key estimate on $u$ in terms of $\eta(\Om)$ implied by Ros' argument takes the form
\begin{eqnarray}\label{ros bound}
  C(n)\,|\Om|\,\eta(\Om)\ge \int_{\Om}\Big|\nabla^2 u-\frac{\Id}{n+1}\Big|^2
\end{eqnarray}
where $|T|=(\sum_{ij}T_{ij}^2)^{1/2}$ for a matrix $T$. This inequality  expresses, in a rather direct way, the proximity of the torsion potential of $\Om$ to the torsion potential $(|x-x_0|^2-r^2)/2(n+1)$ of a suitable ball $B_r(x_0)$. The other known proofs of \eqref{hk inq}, namely \cite{montielros,brendlealex}, provide control of the almost-umbilicality of $\pa\Om$ in terms of $\eta(\Om)$. As explained in detail in \cite[Appendix]{ciraolomaggi2017}, almost-umbilicality is more difficult to exploit than direct information on the torsion potential to get compactness results in this setting. This is reflected in the fact that the current results concerning almost-umbilicality do not describe bubbling phenomena; see \cite{delellismuller1,delellismuller2,perez}.

The proof of Alexandrov's theorem for elliptic integrands in \cite{HeLiMaGe09} is based on an anisotropic version of \eqref{hk inq}, which states that if $F$ is smooth and elliptic and $\Om$ is an open bounded connected set with smooth boundary and positive $H^F_\Om$, then
\begin{equation}
  \label{hk inq anisotropic}
  \int_{\pa\Om}\frac{n}{H^F_\Om}\,F(\nu_\Om)\,d\H^n\ge(n+1)|\Om|\,,
\end{equation}
with equality if and only if $\Om=x+r\,K_F$ for some $x\in\R^{n+1}$ and $r>0$. Alternative proofs of \eqref{hk inq anisotropic} are obtained in \cite{MaXi13} and \cite{xia2016abp}. These arguments provide a control on the anisotropic almost-umbilicality of $\pa\Om$ in terms of an anisotropic Heintze-Karcher deficit. Anisotropic almost-umbilicality has been recently addressed in \cite{derosagiuff} under a convexity assumption, which of course prevents the possibility of bubbling into multiple Wulff shapes.

Our approach to Theorem \ref{thm main} will pass through a new proof of \eqref{hk inq anisotropic}, based on an adaptation of Ros' argument that allows us to control the suitable anisotropic variant of the torsion potential in terms of the {\it anisotropic Heintze-Karcher deficit $\eta_F$} of $\Om$,
\[
\eta_F(\Om)=1-\frac{(n+1)|\Om|}{\int_{\pa\Om}(n\,F(\nu_\Om)/H^F_\Om)}\,.
\]
(We consider  $\eta_F(\Om)$ only for sets with $H^F_\Om>0$ on $\pa\Om$, and we also notice that $\eta_F(\Om)\le\de_F(\Om)/\kappa$ provided $H_\Om^F\geq \kappa H_\Om^{F,0}$; see Lemma \ref{lemma magenta} below.) The right notion of anisotropic torsion potential is found by solving
\[
\left\{\begin{split}
  \Delta_Fu=1\qquad\mbox{in $\Om$}\,,
  \\
  u=0\qquad\mbox{on $\pa\Om$}\,.
\end{split}\right .
\]
where $\Delta_F$ denotes the Finslerian Laplace operator, defined by
\[
\Delta_Fu=\Div(\nabla(F^2/2)(\nabla u))\,.
\]
The operator is non-smooth at $\{\nabla u=0\}$, and particular attention must be paid in many parts of the argument in managing the critical set of $u$.

We now make some more technical comments on the proof, which should also be interesting in connection with our subsequent discussion of the role of ellipticity in the compactness theorem (see Section \ref{section weak alexandrov} below). As already mentioned, Ros' proof of \eqref{ros bound} crucially exploits Reilly's identity \cite{reilly}. Similarly, in Proposition \ref{proposition reilly} below, we prove the following anisotropic version of Reilly's identity (which, apparently, has not been previously stated in the literature):
\begin{equation}\label{aniso reilly introoo}
\int_\Omega (\Delta_F u)^2 -\trace\left((\nabla(\nabla_Fu))^2\right)= \int_{\pa \Om} H_\Om^F \, F(\nabla u)^2 F(\nu_\Om)\, d\H^n\,,
\end{equation}
where $\nabla_Fu=\nabla (F^2/2)(\nabla u)$. Armed with \eqref{aniso reilly introoo} and with a global Lipschitz estimate for $u$ {\it independent of the ellipticity constants of $F$} (see Proposition \ref{proposition lipschitz estimate}), we generalize \eqref{ros bound} to
\begin{eqnarray}\label{ros bound aniso}
  C(n)\,|\Om|\,\eta_F(\Om)\ge \int_{\Om}\Big\|\nabla(\nabla_Fu)-\frac{\Id}{n+1}\Big\|^2\,.
\end{eqnarray}
Here we set $\|A\|^2=\sum_{i=1}^{n+1} \mu_i^2$ where $\mu_i$ are the eigenvalues of and $A$. Unless we are in the isotropic case, the matrix $A=\nabla(\nabla_Fu)$ (which satisfies $A=\nabla^2F(\nabla u)\nabla^2 u$ on $\{\nabla u\ne0\}$) is not symmetric, but is real diagonalizable. In particular \eqref{ros bound aniso} does not provide direct control on the norm of $A$. However, such an estimate can be expressed if one allows the ellipticity constants of $F$ to appear in the estimate, which then takes the form
\begin{eqnarray}\label{ros bound aniso elliptic}
  C(n)\,|\Om|\,\frac{\Lambda}{\lambda}\,\eta_F(\Om)\ge \int_{\Om}\Big|\nabla(\nabla_Fu)-\frac{\Id}{n+1}\Big|^2\,.
\end{eqnarray}
This simple but delicate point is the only step of the proof of Theorem \ref{thm main1} where the ellipticity assumption is used.

\subsection{Critical points and local minimizers of anisotropic energies}\label{section local min} A natural and important application of Theorem \ref{thm main1} is the quantitative characterization of critical points and local minimizers of anisotropic surface energies under the action of a confining potential and with a small volume constraint. More precisely, the kind of energy we consider takes the form
\[
\E(\Om)=\F(\Om)+\int_{\Om} g\,,
\]
for a convex integrand $F$ and a smooth potential $g:\R^{n+1}\to\R$. When $g(x)\to\infty$ as $|x|\to\infty$ volume-constrained minimizers of $\E$ exist for every volume $v$. Of particular relevance in capillarity and phase transition models is the case when $v$ is small, and thus the surface energy $\F$ dominates the minimization.

We say that a bounded set of finite perimeter $\Om$ is a {\it volume-constrained critical point} of $\E$ if
\begin{equation}
\label{critical point of E 0}
\frac{d}{dt}\bigg|_{t=0}\E(f_t(\Om))=0
\end{equation}
whenever $f_t$ is a curve of diffeomorphisms such that $f_0=\Id$ and $|f_t(\Om)|=|\Om|$ for every $t$ in a neighborhood of $t=0$. We say that $\Om$ is a {\it volume-constrained $r_0$-local minimizer of $\E$} if there exists $r_0>0$ such that
\begin{equation}
  \label{locmin hp}
  \E(\Om)\le\E(\Om')\qquad
\mbox{whenever $|\Om'|=|\Om|$ and $\Om\Delta\Om'\cc I_{|\Om|^{1/(n+1)}\,r_0}(\pa\Om)$}\,.
\end{equation}
Here, $I_r(E)=\{x\in\R^{n+1}:\dist(x,E)<r\}$ denotes the $r$-neighborhood of a set $E\subset\R^{n+1}$. Obviously, local minimizers are critical points.

As an application of Theorem \ref{thm main1}, we analyze volume-constrained local minimizers and critical points of $\E$.

\begin{theorem}
  \label{thm locmin} Let $F$ be a smooth elliptic integrand, $g:\R^{n+1}\to\R$ be a smooth function, $M>0$, and let $\Om\subset B_M$ be an open connected set with smooth boundary such that
  \[
  \frac{\F(\Om)^{n+1}}{|\Om|^n}\le M\,,\qquad\frac{\diam(\Om)}{|\Om|^{1/(n+1)}}\le M\,.
  \]
  Then the following holds:

  \medskip

  \noindent (i) for every $\e>0$ there exists $v_\e=v_\e(\e,n,M,F,g)>0$ such that if $\Om$ is a volume-constrained critical point of $\E$ with
  \[
  |\Om|<v_\e\,,
  \]
  then
  \[
  \Big|\Om^*\Delta \bigcup_{i=1}^L(x_i+K_F)\Big|\le\e\qquad\mbox{where}\quad \Om^*=\frac{H^{F,0}_\Om}{n}\,\Om\,,
  \]
  for some $L\ge 1$ (which is a priori bounded from above in terms of $F$ and $M$) and $\{x_i\}_{i=1}^L\subset\R^{n+1}$ such that the sets $\{x_i+K_F\}_{i=1}^L$ are mutually disjoint.

    \medskip

  \noindent (ii) given $r_0>0$, there exists $v_0=v_0(r_0,n,M,F,g)>0$ such that, if $\Om$ is a volume-constrained $r_0$-local minimizer of $\E$ with
  \[
  |\Om|<v_0\,,
  \]
  then there exists $x_0\in\R^{n+1}$ such that, setting $K=x+s\,K_F$ with $s=(|\Om|/|K_F|)^{1/(n+1)}$, one has
  \begin{equation}
    \label{locmin L1 close}
      \frac{|\Om\Delta K|}{|\Om|}\le C(n,F,g,M)\,|\Om|^{1/(n+1)}\,,
  \end{equation}
  \begin{equation}
    \label{locmin hd close}
      \frac{\hd(\pa\Om,\pa K)}{|\Om|^{1/(n+1)}}\le C(n,F,g,M)\,|\Om|^{1/(n+1)^2}\,.
  \end{equation}
\end{theorem}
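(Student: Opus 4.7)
Both parts follow from Theorem \ref{thm main1} applied to the rescaling $\Om^{*}=(H^{F,0}_{\Om}/n)\,\Om$, for which $H^{F,0}_{\Om^{*}}=n$. The hypotheses $\F(\Om)^{n+1}/|\Om|^{n}\le M$ and $\diam(\Om)/|\Om|^{1/(n+1)}\le M$ translate into uniform bounds on $\F(\Om^{*})$ and $\diam(\Om^{*})$, which in particular give an a priori upper bound on the integer $L$ in terms of $F$ and $M$.

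For part (i), a volume-constrained critical point of $\E$ satisfies the Euler--Lagrange equation $H^{F}_{\Om}=\lambda-g$ for some Lagrange multiplier $\lambda\in\R$. Since $\Om\subset B_{M}$ and $\diam(\Om)\le M|\Om|^{1/(n+1)}$, the oscillation of $H^{F}_{\Om}$ on $\pa\Om$ is at most $\mathrm{Lip}(g|_{B_{M}})\,M\,|\Om|^{1/(n+1)}$; the dilation identity $\int_{\pa\Om}H^{F}_{\Om}(x\cdot\nu_{\Om})\,d\H^{n}=n\,\F(\Om)$, obtained by differentiating $\F((1+t)\Om)$ at $t=0$, then forces $\lambda-\bar g=H^{F,0}_{\Om}+O(|\Om|^{1/(n+1)})$ for a suitable reference value $\bar g$ of $g$ on $\Om$. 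Dividing by $H^{F,0}_{\Om}\sim|\Om|^{-1/(n+1)}$ yields $\de_{F}(\Om^{*})=\de_{F}(\Om)\le C|\Om|^{2/(n+1)}$ and $H^{F}_{\Om^{*}}\ge\kappa\,n$ for some $\kappa$ close to one, so Theorem \ref{thm main1} applies and delivers (i).

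For part (ii) I would proceed in two stages. First, exclude $L\ge 2$: if a sequence of $r_{0}$-local minimizers $\Om_{h}$ with $|\Om_{h}|\to 0$ converged (after rescaling and translation) to $\bigcup_{i=1}^{L}(x_{i}+K_{F})$ with $L\ge 2$, the connectedness of $\Om_{h}$ together with the $C^{1,\alpha}$-convergence away from tangency points (from anisotropic Allard regularity, as used in Theorem \ref{thm main1}) would force mutually tangent limit components. A local anisotropic neck-merging surgery supported in the admissible tube $I_{|\Om_{h}|^{1/(n+1)}r_{0}}(\pa\Om_{h})$ would then decrease $\F$ by at least $c(L)|\Om_{h}|^{n/(n+1)}$---reflecting the strict subadditivity of $v\mapsto\F\bigl((v/|K_{F}|)^{1/(n+1)}K_{F}\bigr)$---while changing the potential term by at most $\|g\|_{\infty}|\Om_{h}|^{(n+2)/(n+1)}$; for $|\Om_{h}|$ small this contradicts $r_{0}$-local minimality. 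Once $L=1$, $\Om$ is Hausdorff-close to $K=x_{0}+s\,K_{F}$ with $s=(|\Om|/|K_{F}|)^{1/(n+1)}$; for $|\Om|<v_{0}$ this $K$ is itself admissible, and $\E(\Om)\le\E(K)$ combined with $|K|=|\Om|$ and the Lipschitz character of $g$ on $B_{M}$ gives
\[
\F(\Om)-\F(K)\le C(n,F,g,M)\,|\Om|^{1/(n+1)}\,|\Om\Delta K|.
\]
Chaining this with the Figalli--Maggi--Pratelli quantitative Wulff inequality $|\Om\Delta K|^{2}\le C(n,F)\,|\Om|^{2}\,(\F(\Om)/\F(K)-1)$ and using $\F(K)\sim|\Om|^{n/(n+1)}$ closes the loop and proves \eqref{locmin L1 close}. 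Finally, \eqref{locmin hd close} follows from \eqref{locmin L1 close} via the standard density estimates for volume-constrained $\Lambda$-minimizers (with $\Lambda$ controlled by $\|g\|_{\infty}$), which convert $L^{1}$ closeness into Hausdorff closeness at rate $(L^{1})^{1/(n+1)}$.

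The main obstacle is the exclusion of $L\ge 2$ for $r_{0}$-local minimizers: because the admissible competitors are confined to a thin tube around $\pa\Om$, the global single-Wulff replacement is not in general available, and one has to build a genuine local surgery near the tangency points of the limiting Wulff shapes, quantifying its energetic gain using the $C^{1}$-information provided by Theorem \ref{thm main1}.
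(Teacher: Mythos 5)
For part (i) your route is essentially the paper's: derive the Euler--Lagrange equation $H^F_\Om=\ell-g$, use a first-variation/dilation identity to relate $\ell$ to $H^{F,0}_\Om$, and conclude that $\de_F(\Om^*)\to 0$ as $|\Om|\to 0$, so that Theorem \ref{thm main1} applies. (Minor quibble: the rate you claim, $\de_F\le C|\Om|^{2/(n+1)}$, is too optimistic in general. After rewriting $\ell-H^{F,0}_\Om=\frac{1}{(n+1)|\Om|}\int_\Om\Div(g\,x)$ as in \eqref{formula for lambda}, the term $\frac{1}{(n+1)|\Om|}\int_\Om\nabla g\cdot x$ is only $O(1)$ under the stated hypothesis $\Om\subset B_M$, because $|x|\le M$ cannot be improved by translation. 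Hence $|H^F_\Om-H^{F,0}_\Om|$ is $O(1)$, and one only gets $\de_F\le C|\Om|^{1/(n+1)}$ after dividing by $H^{F,0}_\Om\gtrsim|\Om|^{-1/(n+1)}$ as in \eqref{deficit critical}. This doesn't affect the conclusion.)

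The serious gap is in part (ii), in the step excluding $L\ge 2$, which you yourself flag as ``the main obstacle.'' Your proposed neck-merging surgery in the admissible tube is not carried out, and it would require $C^1$ control of $\pa\Om_h$ near the tangency points. That $C^{1,\alpha}$ convergence away from tangencies is mentioned in the Introduction (Section \ref{section ciraolomaggi}) for the isotropic case of \cite{ciraolomaggi2017}, but it is \emph{not} part of the conclusion of Theorem \ref{thm main1}, whose proof here goes through the torsion potential rather than Allard-type regularity. So you would be invoking regularity you do not have. The paper avoids surgery entirely: in Step one it proves $\Om^*$ is a $(S,\rho_0)$-minimizer of $\F$ (a nontrivial volume-fixing comparison argument, \eqref{rho0S minimizer}); this gives two-sided \emph{volume density} estimates \eqref{volume density estimates}. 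If $L\ge 2$, connectedness forces two Wulff shapes to be nearly tangent, and one finds a boundary point $x$ of $\Om^*$ near the tangency. Since two smooth Wulff shapes touching at $x$ fill $|B_\rho(x)|(1-o(1))$ as $\rho\to 0$, the upper density estimate $|\Om^*\cap B_\rho(x)|\le(1-\k)|B_\rho(x)|$ is violated for $\e$ small; see \eqref{st} and the lines following. This is a purely measure-theoretic argument and requires no local surgery or $C^1$ information. Once $L=1$, the remainder of your outline (Hausdorff closeness from density estimates, then admissibility of $K=x_0+sK_F$ and the quantitative Wulff inequality) does match the paper's Step three, modulo the careful scaling bookkeeping in \eqref{via 6}--\eqref{via 8} needed to pass from the $t$-rescaling to the $s$-rescaling. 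You should redo the $L\ge 2$ exclusion along the density-estimate lines, not via surgery.
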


This result was shown for volume-constrained global minimizers of $\E$
in \cite[Theorem 1]{FigalliMaggiARMA}.
The key difference between the case of global minimizers
and the case of critical points/local minimizers addressed here is that \eqref{critical point of E 0} and \eqref{locmin hp} do not immediately allow the comparison of the energy of $\Om$ with that of a Wulff shape with same volume. This direct comparison was the key step in \cite{FigalliMaggiARMA} to exploit the quantitative Wulff inequality from \cite{FigalliMaggiPratelliINVENTIONES} and to deduce the $L^1$-proximity of any global minimizer to a Wulff shape.

Volume-constrained critical points of $\E$ turn out be volume-constrained almost-critical points of $\F$ thanks to a variational argument, so Theorem \ref{thm main1} allows one to deduce that $\Om$ is close in volume to a finite family of Wulff shapes. In the case of local minimizers, using volume density estimates we can show this family consists of a single Wulff shape, and then $\pa\Om$ is close to the boundary of this Wulff shape in Hausdorff distance. But this implies that a Wulff shape of the appropriate volume is an admissible competitor in \eqref{locmin hp}, which in turn enables us to exploit the quantitative Wulff inequality and obtain \eqref{locmin L1 close} and \eqref{locmin hd close}; see Section \ref{section locmin}.

In \cite[Theorem 2]{FigalliMaggiARMA} it was proven that a volume-constrained global minimizer $\Om$ of $\E$ is actually convex with
\begin{equation}
\label{valid for}
  \Big\|\nabla^2F(\nu_{\Om^*})\nabla\nu_{\Om^*}-\Id\Big\|_{C^0(\pa\Om^*)}\le C(n,F,g)\,|\Om|^{2/(n+3)}\,.
\end{equation}
(Let us recall here that $\nabla^2F(\nu_{K_F})\nabla\nu_{K_F}=\Id$ on $\pa K_F$.)
In our setting of connected volume-constrained local minimizers, once \eqref{locmin hd close} is proven, one can also repeat all the remaining analysis from \cite{FigalliMaggiARMA}. In particular, arguing as in \cite[Appendix C]{FigalliMaggiARMA}, one can show that $\pa\Om$ is a $C^1$-small normal deformation of $\pa K$ with quantitative bounds on the $C^1$-norm of the normal deformation in terms of explicit powers of $|\Om|$. Then, one can also repeat the argument from \cite[Theorem 13]{FigalliMaggiARMA} to show that \eqref{valid for} holds and thus that $\Om$ is convex. As this part of the argument would be identical to that of \cite{FigalliMaggiARMA}, we omit it and simply remark that, thanks to Theorem \ref{thm main1}, all the small-volume regime properties of volume-constrained global minimizers of $\E$ proved in \cite{FigalliMaggiARMA} hold for connected local minimizers as well.

\subsection{Weak Alexandrov's theorem for convex integrands}\label{section weak alexandrov}  A way to look at the classical Alexandrov's theorem is to consider it as a {\it convexity property} of the volume-constrained perimeter functional; Alexandrov's theorem says that the only critical points of the volume-constrained perimeter are its global minimizers.  The property that global minimizers are the only critical points is a characteristic consequence of convexity.

The same can be said about the anisotropic Alexandrov's theorem for smooth elliptic integrands \cite{HeLiMaGe09}. Once again, we have an energy functional whose only critical points are its global minimizers. A natural question is whether this property depends on the assumption that the anisotropy is smooth and elliptic. Indeed, anisotropic energies that fail to be either smooth or elliptic (or both) are of great interest in applications, and the anisotropic isoperimetric problem is totally unaffected by the lack of smoothness or of ellipticity (recall that Wulff shapes are the unique isoperimetric sets of every convex integrand). Therefore one conjectures that Alexandrov's theorem should hold, in some proper formulation, for every anisotropic energy. This conjecture seems open in ambient space dimension $n+1\ge 3$. A result of Morgan \cite{MorganPlanar}, which proves that Wulff shapes are the only anisotropic critical immersion of a curve in the plane, is an indication in favor of this conjecture. Of course, there is a substantial difference between the planar case and higher dimensions, because in the planar case criticality implies convexity of the connected components.

With this premise in mind, we now discuss a generalization of Theorem \ref{thm main1}, namely Theorem \ref{thm main} below, which serves as a sort of {\it weak version of Alexandrov's theorem valid for arbitrary convex integrands}.

\begin{theorem}[Weak anisotropic Alexandrov theorem]
  \label{thm main} Let $\{F_h\}_{h\in\N}$ be a sequence of smooth elliptic integrands that converges pointwise to a limit function F, and assume that, for some positive constants $m$, $M$, $\l_h$ and $\Lambda_h$ ($m$ and $M$ independent of $h$)
  \begin{equation}\label{smooth elliptic approx}
  \begin{split}
    m\le F_h\le M&\qquad\mbox{on $\S^n$}\,,
    \\
    \l_h\,\Id\le \nabla^2 F_h(\nu)\le \Lambda_h\,\Id&\qquad\mbox{on $\nu^\perp$, $\forall\nu\in\S^n$}\,.
  \end{split}
  \end{equation}
  Then the following holds: If $\{\Om_h\}_{h\in\N}$ is a sequence of bounded open sets with smooth boundary, normalized so that $H_{\Om_h}^{F_h,0}=n$ with
  \begin{equation}
    \label{xyzttt}
      \mbox{$H^{F_h}_{\Om_h}\ge \kappa \,n$ on $\pa\Om_h$}\,,\qquad
      \sup_{h\in\N}\diam(\Om_h)<\infty\,,\qquad \sup_{h\in\N}\frac{\F_h(\Om_h)}{\F_h(K_{F_h})}\le L+\s\,
  \end{equation}
  for some $\kappa>0$,
  $L\in\N$ and $\s\in(0,1)$, and if
  \begin{equation}
    \label{small def main theorem}
      \lim_{h\to\infty}\max\Big\{\frac1{\l_h^2}\,,\frac{\Lambda_h}{\lambda_h}\Big\}\,\eta_{F_h}(\Om_h)+\de_{F_h}(\Om_h)=0\,,
  \end{equation}
  then there exists an open set $\Om$, which is the disjoint union of at most $L$-many translations of $K_F$ such that, up to translations and up to extracting subsequences,
  \begin{equation}
    \label{convergence main theorem}
     \lim_{h\to\infty}\big|\F_h(\Om_h)-\F(\Om)\big|+|\Om_h\Delta\Om|=0\,.
  \end{equation}
\end{theorem}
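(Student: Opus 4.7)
The plan is to run the anisotropic Ros/Reilly machinery on the Finslerian torsion potentials $u_h$ of $\Omega_h$ (solving $\Delta_{F_h}u_h=1$ in $\Omega_h$ with $u_h=0$ on $\partial\Omega_h$) and then pass to the limit, using the factors $1/\lambda_h^2$ and $\Lambda_h/\lambda_h$ in \eqref{small def main theorem} to compensate for the degeneration of ellipticity. The starting point is Lemma \ref{lemma magenta} (promised in the introduction), which under $H^{F_h}_{\Omega_h}\ge\kappa n$ gives $\eta_{F_h}(\Omega_h)\le\delta_{F_h}(\Omega_h)/\kappa\to 0$. Combining this with estimate \eqref{ros bound aniso elliptic} yields
\[
\int_{\Omega_h}\Big|\nabla(\nabla_{F_h}u_h)-\frac{\mathrm{Id}}{n+1}\Big|^2\le C(n)\,|\Omega_h|\,\frac{\Lambda_h}{\lambda_h}\,\eta_{F_h}(\Omega_h)\longrightarrow 0,
\]
so the Euclidean Jacobian of $\nabla_{F_h}u_h$ is $L^2$-close to $\mathrm{Id}/(n+1)$, which is the anisotropic analog of saying that the torsion Hessian is close to a multiple of the identity.

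Next I would extract a limit. The uniform diameter bound, together with $\mathcal{F}_h(\Omega_h)\le(L+\sigma)\mathcal{F}_h(K_{F_h})$ and $m\le F_h\le M$, gives a uniform Euclidean perimeter bound, so up to translation and subsequence $\Omega_h\to\Omega$ in $L^1$ for some bounded set of finite perimeter $\Omega$. The ellipticity-independent Lipschitz bound of Proposition \ref{proposition lipschitz estimate} gives $\|u_h\|_{W^{1,\infty}}\le C$ uniformly in $h$, so along a further subsequence $u_h$ converges uniformly to a Lipschitz limit $u$ vanishing on $\partial\Omega$. Pointwise convergence $F_h\to F$ upgrades on compact subsets of $\{p\neq 0\}$ to $C^1$-convergence of $\nabla(F_h^2/2)$, and together with the above $L^2$-control this allows us to pass to the limit to obtain $\nabla(\nabla_F u)=\mathrm{Id}/(n+1)$ on $\Omega$ in a suitable weak sense. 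Since this equation characterizes Finslerian paraboloids associated to translates of dilates of the Wulff shape, each connected component of $\Omega$ must be of the form $x_j+r_j K_F$ with $u$ the corresponding paraboloid.

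The hard part, corresponding to step six of the proof, is to fix $r_j=1$ and to count the components. The $L^2$-nature of the deficit excludes the sliding/maximum-principle arguments used for the $C^0$ case in \cite{ciraolomaggi2017}, since one cannot localize the mean-curvature deficit at a prescribed contact point. Instead, a family of Pohozaev-type identities of different homogeneities --- obtained by testing $\Delta_{F_h}u_h=1$ against multipliers such as $u_h$ itself (giving a Dirichlet-energy identity) and $x\cdot\nabla u_h$ (the Pohozaev multiplier) --- produces rigidity relations whose error terms are controlled respectively by $\lambda_h^{-2}\,\eta_{F_h}(\Omega_h)$ and $\delta_{F_h}(\Omega_h)$, both of which vanish by \eqref{small def main theorem}. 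Passing these identities to the limit forces $r_j=1$ (consistent with the normalization $H^{F_h,0}_{\Omega_h}=n$), and the perimeter hypothesis bounds the number of components by $L$. Finally, $|\mathcal{F}_h(\Omega_h)-\mathcal{F}(\Omega)|\to 0$ follows because the same Pohozaev identity pins the total energy to a definite multiple of $\mathcal{F}_h(K_{F_h})$, which converges to $L'\mathcal{F}(K_F)=\mathcal{F}(\Omega)$ by pointwise convergence $F_h\to F$ and dominated convergence, while the $L^1$-convergence of the sets is already in hand. The principal technical obstacle is therefore the interplay in step six between the non-smoothness of $\Delta_{F_h}$ near $\{\nabla u_h=0\}$, the degeneration of ellipticity, and the integral (rather than uniform) nature of the deficit, which is exactly where the weighted smallness condition \eqref{small def main theorem} is designed to be used.
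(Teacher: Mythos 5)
Your plan follows the paper's strategy faithfully in outline: anisotropic Ros/Reilly machinery on the Finslerian torsion potentials, the ellipticity-independent Lipschitz bound, $L^1$-compactness, passage to a limit potential $u$ that is piecewise a Wulff paraboloid, and a family of Pohozaev-type identities of varying homogeneity to pin down the radii. Two ingredients in the paper's step six/seven, however, are not what you propose, and I think they are where the argument would stall without them.

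First, the multipliers. Testing $\Delta_{F_h}u_h=1$ against $u_h$ gives the energy identity $\int F_h(\nabla u_h)^2=-\int u_h$, and $x\cdot\nabla u_h$ gives a single Rellich--Pohozaev identity; neither produces the needed relation between moments of different homogeneity. What the paper actually integrates is the one-parameter family of vector fields $\theta_{\alpha,h}=F_h(\nabla u_h)^{\alpha}\nabla_{F_h}u_h$, $\alpha>0$. Writing out $\Div\theta_{\alpha+1,h}$ and using $\Div(\nabla_{F_h}u_h)=1$ together with the control on $\nabla^2 u_h - \nabla^2((F_h)^2_*/2)(\nabla F_h(\nabla u_h))/(n+1)$ (estimate \eqref{estimate 1b}) and the boundary estimate \eqref{estimate 2} on $F_h(\nabla u_h)-\tfrac{1}{n+1}$ yields, in the limit, the homogeneity relation
\begin{equation*}
\frac{n+1+\alpha}{(n+1)^2}\int F(\nabla u)^\alpha=\frac{n+2+\alpha}{n+1}\int F(\nabla u)^{\alpha+1}\qquad\forall\alpha>0\,,
\end{equation*}
which, upon computing $\int F(\nabla u)^\alpha$ on the components $A_j=x_j+s_jK_F$ by the coarea formula, becomes $\sum_j s_j^{n+1+\alpha}=\sum_j s_j^{n+2+\alpha}$ for all $\alpha>0$; combining two adjacent values of $\alpha$ gives $\sum_j s_j^{n+1+\alpha}(s_j-1)^2=0$, hence $s_j=1$. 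The multipliers you name would not produce that shift in homogeneity.

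Second, passing $\int_{\Omega_h}F_h(\nabla u_h)^\alpha$ to the limit is itself a genuine difficulty, since you only have $\nabla u_h\overset{*}{\rightharpoonup}\nabla u$ in $L^\infty$: nonlinear functionals of $\nabla u_h$ do not automatically converge. The paper first upgrades the variational characterization of $u_h$ to get convergence of the Dirichlet-type energy $\int F_h(\nabla u_h)^2$ (via competitors $(u+\varepsilon_h)_-$), and then invokes the fundamental theorem of Young measures plus the strict radial convexity of $t\mapsto F((1+t)\nu)^2$ to conclude that the generating Young measure is supported on the level set $\{F=F(\nabla u(x))\}$, which is exactly what is needed to pass any integrand of the form $g(\nabla u_h)$ with $g(0)=0$ to the limit. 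Related subtlety: your assertion that pointwise convergence $F_h\to F$ upgrades to $C^1$-convergence of $\nabla(F_h^2/2)$ on compacts of $\{p\neq 0\}$ does not follow from \eqref{smooth elliptic approx} alone; the paper instead works with subdifferentials and a Mazur lemma argument to show $v(x)\in\partial(F^2/2)(\nabla u(x))$, precisely because the gradient of the limit need not be the pointwise limit of gradients. These are the concrete places your sketch would need to be filled in before it could be called a proof.
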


\begin{remark}
  {\rm The assumption \eqref{small def main theorem} is implied by $\max\{1/\l_h^2,\Lambda_h/\l_h\}\,\de_{F_h}(\Om_h)\to 0$
  because $\eta_F\le\de_F/\kappa$ (see Lemma~\ref{lemma magenta} below).
  In particular, Theorem \ref{thm main1} is a corollary of Theorem \ref{thm main} if one takes $F$ to be smooth and elliptic and $F_h\equiv F$.}
\end{remark}

The interpretation of Theorem \ref{thm main} as a weak version of Alexandrov's theorem for general convex integrands is the following. Clearly, every convex integrand $F$ can be approximated by smooth elliptic integrands $F_h$ as in \eqref{smooth elliptic approx}, as the ratio $\max\{1/\l_h^2,\l_h/\Lambda_h\}$ is allowed to vanish in the limit as $h\to\infty$.  Theorem \ref{thm main} thus asserts that (unions of) Wulff shapes of $F$ are the only possible accumulation points of sequences of almost-critical points of the approximating $\F_h$ with anisotropic Heintze-Karcher deficit vanishing faster than the rate of degeneracy of the ellipticity constants. This fast convergence assumption may be purely technical in nature. In our argument, it arises exactly in the derivation of \eqref{ros bound aniso elliptic} from \eqref{ros bound aniso}, as previously explained.

Theorem \ref{thm main} immediately leads to the following delicate question: given a convex integrand $F$ and a bounded open set $\Om$ with Lipschitz boundary that is a critical point for the volume-constrained anisotropic energy $\F$, is it possible to construct a sequence of smooth elliptic integrands $F_h$ and smooth open sets $\Om_h$ such that $F_h$ converges to $F$ as in \eqref{smooth elliptic approx} and $\Om_h$ satisfies \eqref{xyzttt}, \eqref{small def main theorem} and $|\Om_h\Delta\Om|\to 0$? Whenever this is possible, of course, Theorem \ref{thm main} implies that finite unions of Wulff shapes are the only critical points for the convex integrand $F$.

\bigskip

\noindent{\bf Acknowledgments.} RN supported by the NSF Graduate Research Fellowship under Grant DGE-1110007. FM, RN, and CM supported by the NSF Grants DMS-1565354 and DMS-1361122.

\section{Basic facts on integrands}\label{section basic}  We recall without proof some standard facts about convex integrands and their gauge functions. By {\it convex integrand} we refer to a positive function $F$ on $\S^n$ with a convex one-homogeneous extension to $\R^{n+1}$. The {\it gauge function of $F$} is defined by
\begin{equation}
  \label{def gauge of F}
  F_*(x) = \sup \big\{x \cdot \nu :F(\nu)<1\big\} \qquad x \in \R^{n+1}\,
\end{equation}
and is itself a convex integrand. By convexity of $F$, one has $(F_*)_*=F$. Moreover, the {\it Fenchel inequality} holds:
\begin{equation}
  \label{fenchel inequality}
  x\cdot\nu\le F_*(x)\,F(\nu)\qquad\forall x,\nu\in\R^{n+1}\,.
\end{equation}
If we denote the minimum and the maximum values of $F$ on $\S^n$ by
\begin{equation}
  \label{mm1}
  m_F=\min_{\S^n}\,F,\qquad M_F=\max_{\S^n}F\,,
\end{equation}
then we have
\begin{equation}
  \label{mm2}
  m_F\le|\nabla F(\nu)|\le M_F
\end{equation}
whenever $F$ is differentiable at $\nu$ (that is to say, at almost every $\nu\in\R^{n+1}$), and moreover
\begin{equation}
  \label{mFstar MFstar}
  m_{F_*}=\frac1{M_F}\qquad M_{F_*}=\frac1{m_F}\,.
\end{equation}
By \eqref{wulff shape}, \eqref{def gauge of F} and since $(F_*)_*=F$,
\begin{equation}
  \label{wulff shapes as level sets}
  K_F=\{F_*<1\}\qquad K_{F_*}=\{F<1\}\,.
\end{equation}
Recall that the subdifferential of $F$ at $\nu\in\R^{n+1}$ is defined by
\[
\pa F(\nu)=\Big\{x\in\R^{n+1}:F(w)\ge F(\nu)+x\cdot(w-\nu)\quad\forall w\in\R^{n+1}\Big\}\,,
\]
so $\pa F(\nu)=\{\nabla F(\nu)\}$ if $F$ is differentiable at $\nu$. We immediately see from \eqref{wulff shape} that
\[
K_F=\pa F(0)\,,\qquad K_{F_*}=\pa F_*(0)
\]
and hence, by one-homogeneity of $F$,
\begin{equation}
  \label{boundary wulff as paF}
  \pa K_F=\{F_*=1\}=\bigcup_{\nu\ne 0}\pa F(\nu)\,,\qquad \pa K_{F_*}=\{F=1\}=\bigcup_{x\ne 0}\pa F_*(x)\,.
\end{equation}
(Notice that we use the same symbol $\pa$ both for topological boundaries and for subdifferentials.)

If $F\in C^1(\S^n)$ and is strictly convex, then $F_*\in C^1(\S^n)$ and is strictly convex, and therefore $\pa F(\nu)=\{\nabla F(\nu)\}$ and $\pa F_*(x)=\{\nabla F_*(x)\}$ for every $\nu,x\ne 0$. Under these assumptions, using \eqref{boundary wulff as paF}, one deduces the useful properties
\begin{equation}
  \label{F and its gauge}
  \begin{split}
  F(\nabla F_*(x))=1\,,& \qquad F_*(x)\,\nabla F(\nabla F_*(x))=x\,\qquad \forall x\ne 0\,,\\
  F_*(\nabla F(\nu))=1\,,& \qquad F(\nu)\nabla F_*(\nabla F(\nu))= \nu \qquad\forall \nu\ne 0\,.
  \end{split}
\end{equation}
A short explanation of the vector identities in \eqref{F and its gauge} is as follows. By $\{F_*<1\}$, the normal to $K_F$ at $\nabla F(\nu)\in\pa K_F$ (for $\nu\ne 0$) is parallel to $\nabla F_*(\nabla F(\nu))$. At the same time, by exploiting \eqref{wulff shape} and the one-homogeneity of $F$, we see that the normal to $K_F$ at $\nabla F(\nu)$ is parallel to $\nu$ itself. Thus $\nabla F_*(\nabla F(\nu))=\a\,\nu$ and then one finds $\a=1/F(\nu)$ thanks to $F(\nabla F_*(x))=1$ for $x\ne 0$.

We conclude with some remarks about the functions $F^2/2$ and $F_*^2/2$. First, an important consequence of \eqref{F and its gauge} is that
\begin{equation}
  \label{inverse}
  \nabla(F_*^2/2)=[\nabla(F^2/2)]^{-1} \qquad \text{on } \R^{n+1}\setminus \{0\}\,.
\end{equation}
Indeed, for every $z\in\R^{n+1}$,
  \begin{eqnarray*}
    \nabla(F_*^2/2)(\nabla(F^2/2)(z))&=&F_*\big(F(z)\nabla F(z)\big)\,\nabla F_*\big(F(z)\nabla F(z)\big)
    \\
    &=&F(z)\,F_*(\nabla F(z))\,\nabla F_*(\nabla F(z))=F(z)\,\frac{z}{F(z)}=z\,.
  \end{eqnarray*}
Moreover, \eqref{inverse} holds for generic convex integrands in the sense that
\begin{equation}\label{eq:invertingsubdifferentials}
z\in \partial (F^2/2)(x) \iff x\in \partial (F_*^2/2)(z)\,.
\end{equation}
When $F\in C^\infty(\S^n)$ is an elliptic integrand (so that \eqref{elliptic integrand} holds with constants $\l$ and $\Lambda$),
one has that $F^2/2\in C^{1,1}(\R^{n+1})\cap C^\infty(\R^{n+1}\setminus\{0\})$ with
\begin{eqnarray}\label{GF formulas}
\nabla (F^2/2)(\nu)&=&\left\{
\begin{split}
  &F(\nu)\,\nabla F(\nu)\,,\qquad\mbox{if $\nu\ne 0$}\,,
  \\
  &0\,,\hspace{1.8cm}\qquad\mbox{if $\nu=0$}\,,
\end{split}
\right .
\\\nonumber
\nabla^2(F^2/2)(\nu)&=&\nabla F(\nu)\otimes\nabla F(\nu)+F(\nu)\nabla^2F(\nu)\qquad\mbox{if $\nu\ne 0$}\,,
\end{eqnarray}
and
\begin{equation}
  \label{elliptic integrand GF}
  \l_*\,\Id\le \nabla^2 (F^2/2)(\nu)\le \Lambda_*\,\Id\qquad\mbox{on $\R^{n+1}\setminus\{0\}$}
\end{equation}
for positive constants $\l_*$ and $\Lambda_*$. By \eqref{GF formulas}, we see that we can take
\[
\l_*=m_F\,\min\{\l,m_F\}\qquad \Lambda_*=2 M_F\max\{M_F,\Lambda\}\,.
\]
We have the identity
\begin{equation}\label{eqn: hessian inverses}
\na^2(F^2/2)(\nu) \circ \na^2(F_*^2/2)(\na F(\nu)) = \Id\qquad\forall \nu\in\S^n\,.
\end{equation}
To conclude this section, we note that will sometimes use the shorthand
\begin{equation}\label{eqn: cahn}
\na_Fu(x) = \na(F^2/2)(\na u(x))
\end{equation}

\section{Proof of the main theorem} This section contains the proof of Theorem \ref{thm main}.  Section \ref{section distance function} and Section \ref{section torsion potential} serve to introduce, the anisotropic signed distance function and the anisotropic torsion potential on a smooth bounded open set respectively. In Section \ref{section anisotropic reilly} we prove an anisotropic version of Reilly's identity, while in Section \ref{section lipschitz estimate} we obtain a Lipschitz estimate on the anisotropic torsion potential that is interestingly independent of ellipticity. In Section \ref{section hk with bounds} we exploit the torsion potential to give a proof of the anisotropic Heintze-Karcher inequality of \cite{HeLiMaGe09} in the spirit of Ros' argument. The key result is identity \eqref{hk anisotropic plus}, which relates the gap in the  anisotropic Heintze-Karcher inequality to the properties of the torsion potential. Together with the Lipschitz estimate, this is a key fact to obtain compactness. This is finally done in Section \ref{section proof final}, where we prove Theorem \ref{thm main}.

\subsection{The anisotropic signed distance function}\label{section distance function} In the next proposition we collect some useful facts on the anisotropic signed distance function from a bounded open smooth set.

\begin{proposition}\label{proposition signed distance}
  Let $F$ be an elliptic integrand, let $\Om$ be a bounded open smooth set, and define the $F$-anisotropic signed distance function of $\Om$ by
  \begin{equation}
    \label{anisotropic signed distance}
      \g(x)=\inf\big\{F_*(x-y):y\in\Om\big\}-\inf\big\{F_*(y-x):y\in\Om^c\big\}\,,\qquad x\in\R^{n+1}\,.
  \end{equation}
  Then there exists an open neighborhood $N$ of $\pa\Om$ such that $\g$ is smooth in $N$ and
  \begin{equation}
    \label{nablag}
      \nabla\g(y)=\frac{\nu_\Om(y)}{F(\nu_\Om(y))}\qquad\forall y\in\pa\Om\,.
  \end{equation}
\end{proposition}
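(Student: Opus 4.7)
My plan is to reduce the whole statement to the fact that the anisotropic normal map
\[
\Psi:\partial\Omega\times(-\varepsilon,\varepsilon)\to\R^{n+1}\,,\qquad \Psi(y,t)=y+t\,\nabla F(\nu_\Omega(y))\,,
\]
is a smooth diffeomorphism onto a neighborhood $N$ of $\pa\Om$ for some $\e>0$, and that on $N$ one has the identity
\begin{equation}\label{claim psi}
\g(\Psi(y,t))=t\,.
\end{equation}
Smoothness of $\Psi$ is immediate from the smoothness of $\pa\Om$ and of $F$ on $\S^n$. At $t=0$, the differential $D\Psi_{(y,0)}(\t,s)=\t+s\,\nabla F(\nu_\Om(y))$ is invertible since Euler's identity gives $\nabla F(\nu_\Om(y))\cdot\nu_\Om(y)=F(\nu_\Om(y))\ge m_F>0$, so $\nabla F(\nu_\Om(y))$ is transverse to $T_y\pa\Om$. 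By the inverse function theorem and compactness of $\pa\Om$, one then obtains a uniform $\e>0$ and an open $N=\Psi(\pa\Om\times(-\e,\e))$ on which $\Psi^{-1}$ is smooth.

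The bulk of the work is to verify \eqref{claim psi}. For $t\ge 0$, the set $x=\Psi(y,t)$ lies outside $\Om$, so $\g(x)=\inf\{F_*(x-z):z\in\overline\Om\}$. Using strict convexity and smoothness of $F_*$ away from the origin (both consequences of the ellipticity assumption on $F$) together with smoothness of $\pa\Om$, a standard compactness plus first-variation argument shows that for $\e>0$ small enough this infimum is achieved at a unique boundary point $y^*\in\pa\Om$. The Lagrange condition for $y^*$ reads $\nabla F_*(x-y^*)\parallel\nu_\Om(y^*)$, and by the vector identities \eqref{F and its gauge} (which imply $\nabla F_*(\nabla F(\nu))=\nu/F(\nu)$) this is equivalent to
\[
x-y^*=s\,\nabla F(\nu_\Om(y^*))\qquad\text{for some }s>0\,,
\]
that is, $x=\Psi(y^*,s)$. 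Injectivity of $\Psi$ on $\pa\Om\times(-\e,\e)$ forces $y^*=y$ and $s=t$, and then one-homogeneity of $F_*$ combined with $F_*(\nabla F(\nu))=1$ gives $\g(x)=F_*(t\,\nabla F(\nu_\Om(y)))=t$. The case $t<0$ is completely analogous, using the $F_*$-projection onto $\overline{\Om^c}$ and the sign convention in \eqref{anisotropic signed distance}.

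Once \eqref{claim psi} is established, $\g=\pi_2\circ\Psi^{-1}$ is smooth on $N$, where $\pi_2$ is the projection onto the second factor. For the gradient formula, differentiating \eqref{claim psi} in $t$ at $t=0$ yields $\nabla\g(y)\cdot\nabla F(\nu_\Om(y))=1$, while differentiating $\g\equiv 0$ along $\pa\Om$ yields $\nabla\g(y)\perp T_y\pa\Om$; hence $\nabla\g(y)=\alpha\,\nu_\Om(y)$ and Euler's identity $\nu_\Om(y)\cdot\nabla F(\nu_\Om(y))=F(\nu_\Om(y))$ forces $\alpha=1/F(\nu_\Om(y))$, which is \eqref{nablag}. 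The main technical obstacle is the uniqueness-of-projection step: it is here that the strict convexity of $F_*$ provided by the ellipticity of $F$, together with a uniform tubular-neighborhood argument, is essential; everything else reduces to the inverse function theorem and elementary homogeneity identities.
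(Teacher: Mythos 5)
Your proof is correct, and it takes a genuinely different route to the two key facts (uniqueness of the projection near $\pa\Om$, and smoothness of $\g$). The paper proceeds by constructing explicit interior and exterior Wulff balls $K_1=\{F_*(\cdot-x_y)<r\}\subset\Om$ and $K_2=\{F_*(z_y-\cdot)<r\}\subset\Om^c$ that touch $\pa\Om$ at $y$, deduces from the Fenchel identities that $x_y,z_y$ lie on the line $y+\R\,\nabla F(\nu_\Om(y))$, and then observes directly that the $F_*$-projection is single-valued on the segment $[x_y,z_y]$; smoothness of $\g$ in the resulting tube is then taken from the implicit-function-theorem argument in \cite[Lemma 14.16]{GT}. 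You instead treat the anisotropic normal exponential map $\Psi(y,t)=y+t\,\nabla F(\nu_\Om(y))$ as the central object: the inverse function theorem plus compactness of $\pa\Om$ makes it a diffeomorphism onto a tubular neighborhood, and the first-order condition for the $F_*$-projection, combined with the injectivity of $\Psi$, forces the projection of $\Psi(y,t)$ to be $y$ and gives the clean identity $\g\circ\Psi(y,t)=t$, from which both the smoothness $\g=\pi_2\circ\Psi^{-1}$ and the gradient formula (by differentiating in $t$ and tangentially) follow in one stroke. The trade-off is that the paper's Wulff-ball construction is more explicit about the uniform radius $r_0$ of the tube (which is convenient when one wants to track constants), whereas your approach is more economical and hides the uniformity inside the inverse function theorem/compactness step. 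Two places where you are a bit terse and would need to spell things out in a full write-up: the global (not merely local) injectivity of $\Psi$ on $\pa\Om\times(-\e,\e)$ is a compactness argument that deserves a sentence, and in the Lagrange-multiplier step you should note why the multiplier is positive --- e.g.\ because $y^*$ minimizes $F_*(x-\cdot)$ over $\ov\Om$, the segment $[y^*,x]$ cannot enter $\Om$, which together with $\nabla F_*(x-y^*)\cdot(x-y^*)=F_*(x-y^*)>0$ forces $\nabla F_*(x-y^*)$ to be a \emph{positive} multiple of $\nu_\Om(y^*)$, so that $x=\Psi(y^*,s)$ with $s=F_*(x-y^*)\le F_*(x-y)=t<\e$.
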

\begin{proof} Note that $\g<0$ in $\Om$, $\g>0$ in $\Om^c$.
   Since $\Om$ has smooth boundary, it satisfies uniform exterior and interior balls conditions. Since $F$ is a smooth elliptic integrand, the Wulff shape $K_F=\{F_*<1\}$ is uniformly convex, and $K_F^-=\{x\in\R^{n+1}:F_*(-x)<1\}$ is uniformly convex as well. Combining these facts, we see that there exists $r>0$ with the following property: for every $y\in\pa\Om$, there exist $x_y\in\Om$ and $z_y\in\Om^c$ such that
  \begin{eqnarray}
    \label{xy}
    \big\{w\in\R^{n+1}:F_*(w-x_y)<r\big\}\subset\Om\,,&\qquad F_*(y-x_y)=r\,,
    \\
    \label{zy}
    \big\{w\in\R^{n+1}:F_*(-(w-z_y))<r\big\}\subset\Om^c\,,&\qquad F_*(-(y-z_y))=r\,;
  \end{eqnarray}
  see Figure \ref{fig distance}.
  \begin{figure}
    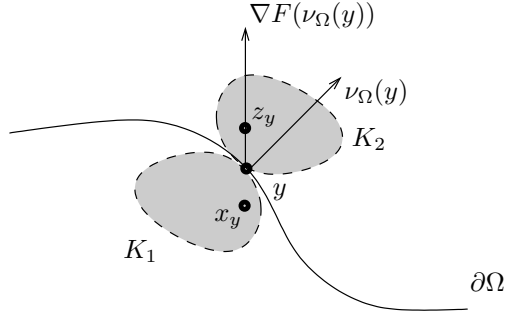\caption{{\small The uniformly convex sets $K_1=\{F_*(\cdot-x_y)<r\}$ and $K_2=\{F_*(z_y-\cdot)<r\}$ touch $\Om$ at $y$, respectively, from inside and from outside. The point $y$ lies on the segment joining centers $x_y$ and $z_y$, which is parallel to $\nabla F(\nu_\Om(y))$.}}\label{fig distance}
  \end{figure}
  We claim that $y-x_y$ and $z_y-y$ are parallel to $\nabla F(\nu_\Om(y))$. Indeed, by \eqref{xy},
  $\nabla F_*(y-x_y)=\a\,\nu_\Om(y)$ with $\a>0$; applying $F$ to both sides and exploiting \eqref{F and its gauge}, we find $\a\,F(\nu_\Om(y))=1$ and so
  \begin{equation}
    \label{xy1}
      \nabla F_*(y-x_y)=\frac{\nu_\Om(y)}{F(\nu_\Om(y))}\,.
  \end{equation}
  An analogous argument shows that $-\nabla F_*(z_y-y)=-\b\,\nu_\Om(y)$ for some $\b>0$, which satisfies $\b=1/F(\nu_\Om(y))$ thanks again to \eqref{F and its gauge}. Thus
  \begin{equation*}
    \label{zy1}
      \nabla F_*(z_y-y)=\frac{\nu_\Om(y)}{F(\nu_\Om(y))}\,.
  \end{equation*}
  Applying $\nabla F$ to both sides of \eqref{xy1} and \eqref{zy1} and taking \eqref{F and its gauge} into account, we find
  \begin{equation}
    \label{xyz}
      \frac{y-x_y}{F_*(y-x_y)}=\nabla F(\nu_\Om(y))=\frac{z_y-y}{F_*(z_y-y)}\,,
  \end{equation}
  as claimed. Now consider the projection map $p:\R^{n+1}\to\pa\Om$ defined by
  \begin{equation}\label{eqn: projection}
  p(x)=\left\{
  \begin{split}
    \big\{y\in\pa\Om:F_*(y-x)=-\g(x)\big\}\qquad\mbox{if $x\in\Om$}\,,
    \\
    \big\{y\in\pa\Om:F_*(x-y)=\g(x)\big\}\qquad\mbox{if $x\in\Om^c$}\,.
  \end{split}
  \right .
  \end{equation}
  Fix $y\in\pa\Om$. By the first claim, if $x\in[x_y,y]$ or $x\in[y,z_y]$, then $p(x)=\{y\}$.So there exists a constant $c>0$, depending only on $m_F$, $M_F$ and the $C^2$ norm of $F$, such that
  \[
  c\le\min\big\{|x_y-y|,|z_y-y|\big\}\,.
  \]
 Hence there exists $r_0$ such that if we set
  \[
  N=\bigcup_{y\in\pa\Om}\Big\{y+t\,\nabla F(\nu_\Om(y)):|t|<r_0\Big\}
  \]
  then for every $x\in N$ there exists a unique $y\in\pa\Om$ such that $p(x)=\{y\}$, with
  \[
  x=y+\g(x)\nabla F(\nu_\Om(y))\,.
  \]
  Exploiting the implicit function theorem as in \cite[Lemma 14.16]{GT}, we find that $\g$ is smooth on $N$. Now, if $x_0\in\Om$, then
  \[
  \g(x)\ge-F_*(p(x_0)-x)\qquad\forall x\in\Om\,,\qquad \qquad \g(x_0)=-F_*(p(x_0)-x_0)\,,
  \]
  so that $\nabla\g(x_0)=\nabla F_*(p(x_0)-x_0)$ for every $x_0\in\Om$. By \eqref{xy1},
  \[
  \nabla\g(x_0)=\frac{\nu_\Om(p(x_0))}{F(\nu_\Om(p(x_0)))}\,.
  \]
  Letting $x_0\to y\in\pa\Om$, we obtain \eqref{nablag}.
\end{proof}

\subsection{The anisotropic torsion potential}\label{section torsion potential} Let $F$ be an elliptic functional and $\Om$ a bounded open set with smooth boundary. The {\it $F$-anisotropic torsion potential of $\Om$} is the unique minimizer $u\in H^1_0(\Om)$ of the strictly convex functional
\[
\int_\Om \frac{F^2(\nabla u)}{2}+u\,.
\]
The Euler-Lagrange equation
\begin{equation}
  \label{anisotropic torsion potential weak}
  \int_\Om \nabla (F^2/2)(\nabla u)\cdot\nabla\vphi=-\int_\Om\vphi\qquad\forall\vphi\in C^1_c(\Om)\,
\end{equation}
holds, that is to say, $u$ is a distributional solution of
\[
\left\{
\begin{split}
  \Delta_Fu=1\,,&\qquad\mbox{on $\Om$}
  \\
  u=0\,,&\qquad\mbox{on $\pa\Om$}\,.
\end{split}
\right .
\]
Here, we have introduced the {\it Finslerian Laplace operator}
\[
\Delta_Fu=\Div(\nabla (F^2/2)(\nabla u))\,.
\]
Notice that the Finslerian Laplace operator satisfies the classical {\it comparison principle}:
\begin{equation}
  \label{comparison principle}
  \left\{
  \begin{split}
    v_1,v_2\in H^1_0(\Om)&
    \\
    -\Delta_Fv_1\ge-\Delta_Fv_2&\qquad\mbox{on $\Om$}
    \\
    v_1\ge v_2&\qquad\mbox{on $\pa\Om$}
  \end{split}
  \right .
  \quad
  \Rightarrow
  \quad
  \mbox{$v_1\ge v_2$ on $\Om$.}
\end{equation}
Indeed, setting $w=\max\{v_2-v_1,0\}$ we find
  \begin{eqnarray*}
  0&\ge&\int_\Om w\,(-\Delta_Fv_2+\Delta_Fv_1)=\int_\Om\nabla w\cdot\big(\nabla (F^2/2)(\nabla v_2)-\nabla (F^2/2)(\nabla v_1)\big)
    \\
    &\ge&\l_*\int_{\{v_2>v_1\}}|\nabla v_2-\nabla v_1|^2
  \end{eqnarray*}
  thanks to \eqref{elliptic integrand GF}. Based on classical regularity arguments, one can prove that $u\in C^{1,\a}(\overline{\Om})\cap H^2(\Om)$ for some $\a\in(0,1)$, see, e.g. \cite[Proposition 2.3]{CianchiSalani}. The critical set
\[
\C=\big\{x\in\ov{\Om}:\nabla u(x)=0\big\}
\]
is thus closed in $\ov{\Om}$ and, since $F^2/2\in C^\infty(\R^{n+1}\setminus\{0\})$, $u$ is smooth in a neighborhood of each $x\in\ov{\Om}\setminus\C$. The following proposition collects some further properties of $u$.

\begin{proposition}\label{proposition some properties of u}
  The critical set $\C$ of the anisotropic torsion potential has Lebesgue measure zero. The torsion potential $u$ is negative in $\Om$ with positive outer normal derivative along $\pa\Om$. In particular, $u$ is smooth in a neighborhood of $\pa\Om$.
\end{proposition}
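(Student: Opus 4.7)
My plan is to prove the three assertions sequentially, leveraging the $H^2$-regularity of the torsion potential and the explicit form of the torsion potential on Wulff shapes, which gives easy barriers from inside.

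First, for the critical set, I will argue that if $\Delta_F u = 1$ almost everywhere in $\Om$, then the set where $\nabla u=0$ has zero measure. Since $u \in H^2(\Om)$, each $\partial_i u \in H^1(\Om)$, and the Stampacchia lemma gives $\nabla(\partial_i u) = 0$ almost everywhere on $\{\partial_i u = 0\}$. Intersecting over $i$ yields $\nabla^2 u = 0$ almost everywhere on $\C$. Since $\nabla(F^2/2)$ is globally Lipschitz by \eqref{elliptic integrand GF} and $\nabla u \in H^1(\Om;\R^{n+1})$, the Sobolev chain rule produces $\nabla(F^2/2)(\nabla u) \in H^1(\Om)$ with distributional derivative equal almost everywhere to $\nabla^2(F^2/2)(\nabla u)\,\nabla^2 u$ on $\Om\setminus\C$ (where $F^2/2$ is smooth) and vanishing almost everywhere on $\C$ (because $\nabla^2 u = 0$ there). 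Hence $\Delta_F u = 0$ almost everywhere on $\C$, and since $\Delta_F u = 1$ almost everywhere in $\Om$, this forces $|\C|=0$.

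For the negativity, applying the comparison principle \eqref{comparison principle} with $v_1 = 0$ and $v_2 = u$ immediately gives $u \leq 0$ in $\Om$. To upgrade to strict inequality I exploit the explicit torsion potential on Wulff shapes. Suppose $u(x_0) = 0$ at some $x_0 \in \Om$, and choose $\rho > 0$ small enough that $B^* := x_0 + \rho K_F \subset\Om$. The function $w(x) = (F_*(x - x_0)^2 - \rho^2)/(2(n+1))$ satisfies $\Delta_F w = 1$ in $B^*$ and $w = 0$ on $\partial B^*$; indeed, by one-homogeneity of $\nabla(F^2/2)$ and identity \eqref{inverse}, $\nabla(F^2/2)(\nabla w(x)) = (x - x_0)/(n+1)$, whose divergence is $1$. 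Since $u \leq 0 = w$ on $\partial B^*$, comparison gives $u \leq w$ in $B^*$, so $0 = u(x_0) \leq w(x_0) = -\rho^2/(2(n+1)) < 0$, a contradiction. Hence $u < 0$ throughout $\Om$.

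For the positive outer normal derivative (and the consequent smoothness near $\partial\Om$) I reuse the inscribed Wulff shape construction from Proposition~\ref{proposition signed distance}. Fix $y \in \partial\Om$ and let $K^- = x_y + r K_F \subset\ov\Om$ with $K^- \cap \partial\Om = \{y\}$, as in \eqref{xy}. The function $\bar v(x) = (F_*(x - x_y)^2 - r^2)/(2(n+1))$ again satisfies $\Delta_F \bar v = 1$ in $K^-$ and $\bar v = 0$ on $\partial K^-$, and since $u \leq 0 = \bar v$ on $\partial K^-$, comparison yields $u \leq \bar v$ in $K^-$ with equality at $y$. Computing $\nabla\bar v(y) = r\,\nu_\Om(y)/((n+1)\,F(\nu_\Om(y)))$ via \eqref{xy1}, the one-sided derivative of $u - \bar v \leq 0$ at its zero $y$ in the inward $\nu_\Om$-direction gives $\nabla u(y)\cdot\nu_\Om(y) \geq r/((n+1)\,F(\nu_\Om(y))) > 0$. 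Thus $\nabla u \neq 0$ on $\partial\Om$, and by $C^{1,\alpha}$-continuity and compactness, $\nabla u \neq 0$ in an open neighborhood of $\partial\Om$. On that neighborhood $\Delta_F u = 1$ becomes a uniformly elliptic linear PDE with smooth coefficients, so Schauder bootstrap (together with the smoothness of $\partial\Om$) yields $u \in C^\infty$ up to $\partial\Om$. The most delicate step is the chain-rule argument for $|\C|=0$: since $F^2/2$ is only $C^{1,1}$ at the origin, one cannot compute $\Delta_F u$ pointwise on $\C$ from classical differentiation, and the argument hinges on the Sobolev-level identity $\nabla^2 u = 0$ almost everywhere on $\C$, which absorbs the absence of $C^2$-regularity of $F^2/2$ at $0$.
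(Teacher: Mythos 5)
Your proof is correct and follows essentially the same route as the paper: the $|\C|=0$ step uses the Ambrosio--Dal Maso/Leoni--Morini Lipschitz chain rule together with the fact that $\nabla^2 u$ vanishes a.e.\ on $\C$ (the paper phrases this as ``$\nabla v=0$ a.e.\ on $\C$'' rather than invoking Stampacchia by name, but the content is identical), and both you and the paper use translates of the explicit Wulff-shape torsion potential $h(x)=F_*(x)^2/2(n+1)$ as barriers for negativity and Hopf. The one genuine, if minor, difference is how strict negativity is obtained: the paper establishes the Hopf inequality first, deduces smoothness and strict negativity near $\pa\Om$, and then passes to the interior with a somewhat compressed comparison argument, whereas you first get $u\le 0$ by comparing with the zero function and then rule out an interior zero $x_0$ by comparing with $h(\cdot-x_0)-\rho^2/2(n+1)$ on a small inscribed Wulff shape around $x_0$; your interior-barrier argument is arguably the cleaner of the two, but it buys no new generality.
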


\begin{proof}
  {\it Step one}: The function $f:\R^{n+1}\to\R^{n+1}$ defined by $f(\nu)=\nabla (F^2/2)(\nu)$  is Lipschitz with $f(0)=0$. Setting $v=\nabla u\in H^1(\Om;\R^{n+1})$, we have $f\circ v\in H^1(\Om;\R^{n+1})$. In particular, \eqref{anisotropic torsion potential weak} implies that
  \begin{equation}
    \label{el strong 1}
      \trace\big(\nabla (f\circ v)\big)=1\qquad\mbox{a.e. on $\Om$}\,.
  \end{equation}
  Let us denote by $L_x$ the affine $n$-dimensional space $L_x=v(x)+\nabla v(x)[\R^{n+1}]$. By the Sobolev chain rule for vector valued Lipschitz functions \cite{ambrosiodalmasochainrule} (see also \cite[Theorem 1.1]{leonimorinichainrule}),  for almost every $x\in\Om$ we have that $(f\circ v)|_{L_x}$ is differentiable at $v(x)$, with
  \[
  \nabla (f\circ v)(x)=\nabla \big((f\circ v)|_{L_x}\big)(v(x))\circ\nabla v(x)\,.
  \]
  Since $\nabla v=0$ almost everywhere on $\C$, we conclude that $\nabla (f\circ v)=0$ almost everywhere on $\C$. This fact, combined with \eqref{el strong 1}, implies that $|\C|=0$.

  \bigskip

  \noindent {\it Step two}: We show that
  \[
  h(x) =\frac{F_*(x)^2}{2(n+1)}\qquad x\in\R^{n+1}\,,
  \]
  satisfies $\Delta_Fh=1$ on $\R^{n+1}$. Indeed $\nabla h(x)=F_*(x)\nabla F_*(x)/(n+1)$, so, by one-homogeneity of $F$ and by \eqref{F and its gauge}, we have
  \[
  F(\nabla h(x))=\frac{F\big(F_*(x)\nabla F_*(x)\big)}{n+1}=\frac{F_*(x)}{n+1}\,F(\nabla F_*(x))=\frac{F_*(x)}{n+1}\,.
  \]
  At the same time, by zero-homogeneity of $\nabla F$ and again by \eqref{F and its gauge},
  \[
  \nabla F(\nabla h(x))=\nabla F(\nabla F_*(x))=\frac{x}{F_*(x)}\qquad\forall x\ne 0\,.
  \]
  Thus $\nabla (F^2/2)(\nabla h(x))=x/(n+1)$ for $x\in \R^{n+1}\setminus\{0\}$, and the desired result follows.

  \bigskip

  \noindent {\it Step three}: We use translations of $h$ as barriers to prove Hopf's lemma and the negativity of $u$ in $\Om$. As seen in Proposition \ref{proposition signed distance}, for every $y_0\in\pa\Om$ we can find $x_0\in\Om$ such that
  \[
  \inf_{y\in\Om^c}F_*(y-x_0)=F_*(y_0-x_0)\,,\qquad \nabla F_*(y_0-x_0)=\frac{\nu_\Om(y_0)}{F(\nu_\Om(y_0))}\,.
  \]
  Corresponding to this choice of $x_0$, we have that
  \[
  \beta=\inf\Big\{h(y-x_0):y\in\Om^c\Big\}=h(y_0-x_0)\,,\qquad \nabla h(y_0-x_0)\cdot\nu_\Om(y_0)>0\,.
  \]
  If we set $v(x)=h(x-x_0)-\beta$ for $x\in\Om$, then $v\ge 0$ on $\pa\Om$, and $\Delta_Fv=1$ in $\Om$. By the comparison principle \eqref{comparison principle}, we find $v\ge u$ in $\Om$, and since $v(y_0)=u(y_0)=0$ with $\nabla v(y_0)\cdot\nu_\Om(y_0)>0$ we conclude that
  \[
  -\nu_\Om(y_0)\cdot\nabla u(y_0)=\lim_{t\to 0^+}\frac{u(y_0-t\nu_\Om(y_0))-u(y_0)}t\le
  \lim_{t\to 0^+}\frac{v(y_0-t\nu_\Om(y_0))-v(y_0)}t<0\,.
  \]
  This proves that $\nu_\Om\cdot\nabla u>0$ on $\pa\Om$. Since $\pa\Om=\{u=0\}$, we have that $|\nabla u|>0$ on $\pa\Om$. Therefore, because $u\in C^{1,\a}(\ov{\Om})$, we find $\dist(\C,\pa\Om)>0$. Since $u$ is smooth on $\ov{\Om}\setminus\C$, we conclude that $u$ is smooth in a neighborhood of $\pa\Om$. Thus, $u$ is strictly negative in a neighborhood of $\pa\Om$, and therefore on the rest of $\Om$ by the comparison principle.
%
%
  \end{proof}

\subsection{The anisotropic Reilly's identity}\label{section anisotropic reilly} The goal of this section is to prove an anisotropic variant of Reilly's identity. Let us recall that if $\Om$ is an open set with smooth boundary and $F$ is an elliptic integrand, then the $F$-anisotropic mean curvature of $\Om$ (with respect to the outer unit normal $\nu_\Om$) is defined as
\[
H^F_\Om=\Div^{\pa\Om}(\nabla F(\nu_\Om))\qquad\mbox{on $\pa\Om$}\,.
\]
The anisotropic mean curvature shares many basic properties of its isotropic counterpart. Of particular importance to us will be the anisotropic variant of the classical identity
\[
\Delta u-\nabla^2u[\nu_\Om,\nu_\Om]=|\nabla u|\,H_\Om\qquad\mbox{on $\pa\Om$}\,,
\]
which holds when $\pa\Om$ is the level set of a smooth function $u$ with non-vanishing gradient on $\pa\Om$. The anisotropic counterpart of this formula involves the Finslerian Laplace operator and takes the form
\begin{equation}\label{eqn: ARMA id}
\Delta_F u-\nabla^2u [ \na F(\na u) , \na F(\na u)]= F(\na u)\,H_\Om^F\qquad\mbox{on $\pa\Om$}\,.
\end{equation}
This formula is derived, under different conventions, in \cite[Theorem 3]{WangXiaARMA}. For the sake of clarity we recall the short proof. First, we claim that
\begin{equation}
  \label{wx1}
  H^F_\Om=\Div(\nabla F(\nabla u))\,.
\end{equation}
Indeed by $\nu_\Om=\nabla u/|\nabla u|$ and the zero-homogeneity of $\nabla F$, we have
\[
H^F_\Om=\Div^{\pa\Om}(\nabla F(\nu_\Om))=\Div^{\pa\Om}(\nabla F(\nabla u))=\Div(\nabla F(\nabla u))\,,
\]
provided that
\[
\nabla u\cdot\nabla\big(\nabla F(\nabla u)\big)[\nabla u]=0\,.
\]
To prove this last identity, we denote by superscripts components and by subscripts partial derivatives, and compute
\begin{eqnarray*}
\big(\nabla(\nabla F(\nabla u))\big)^{ij}&=&(F_{\xi_j}(\nabla u))_{x_i}=\sum_k F_{\xi_j\xi_k}(\nabla u)u_{x_kx_i}\,,
\\
\big(\nabla\big(\nabla F(\nabla u)\big)[\nabla u]\big)^j&=&\sum_{ik}F_{\xi_j\xi_k}(\nabla u)u_{x_kx_i}u_{x_i}\,,
\\
\nabla u\cdot\nabla\big(\nabla F(\nabla u)\big)[\nabla u]&=&\sum_{ijk}F_{\xi_j\xi_k}(\nabla u)u_{x_kx_i}u_{x_i}u_{x_j}
\\
&=&\sum_{ik}u_{x_kx_i}u_{x_i}\sum_jF_{\xi_j\xi_k}(\nabla u)u_{x_j}\,.
\end{eqnarray*}
The last sum over $j$ is equal to zero because $\nabla^2F(\nu)[\nu]=0$ for every $\nu\in\S^n$. Indeed, $F_{\xi_k}(t\,\nu)=F_{\xi_k}(\nu)$ for every $t>0$ and $\nu\in\S^n$.
This proves \eqref{wx1}. Then,
\begin{eqnarray*}
\Delta_Fu&=&\Div(F(\nabla u)\nabla F(\nabla u))=\sum_i\big(F(\nabla u)F_{\xi_i}(\nabla u)\big)_{x_i}
  \\
  &=&\sum_{ij}F_{\xi_j}(\nabla u)F_{\xi_i}(\nabla u)u_{x_jx_i}+F(\nabla u)F_{\xi_i\xi_j}(\nabla u)u_{x_jx_i}
  \\
  &=&\nabla^2u[\nabla F(\nabla u),\nabla F(\nabla u)]+F(\nabla u)\Div(\nabla F(\nabla u))\,,
\end{eqnarray*}
and thus \eqref{eqn: ARMA id} holds thanks to \eqref{wx1}. We now exploit \eqref{eqn: ARMA id}  in the proof of the following anisotropic version of Reilly's identity.

\begin{proposition}[Anisotropic Reilly's identity]\label{proposition reilly} If $\Om$ is a bounded open set with smooth boundary, $F$ is an elliptic integrand and $u$ is the $F$-anisotropic torsion potential of $\Om$, then
\begin{equation}\label{reilly id anisop}
\int_\Omega (\Delta_F u)^2 -\trace\left((\nabla(\nabla_Fu))^2\right)= \int_{\pa \Om} H_\Om^F \, F(\nabla u)^2 F(\nu_\Om)\, d\H^n\,,
\end{equation}

\end{proposition}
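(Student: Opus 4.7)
The plan is to establish a pointwise divergence identity valid for any smooth vector field, apply the divergence theorem on $\Om$ (with the critical set $\C$ handled by an approximation), and then identify the resulting boundary integrand with $H_\Om^F F(\nabla u)^2 F(\nu_\Om)$ via the formula \eqref{eqn: ARMA id} and the Euler-type homogeneity relations from Section \ref{section basic}. Setting $V := \nabla_F u$, so that $\Div V = \Delta_F u$, a short index computation based on Leibniz' rule and commutativity of mixed partials yields
$$
(\Div V)^2 - \trace\bigl((\nabla V)^2\bigr) = \Div\bigl[(\Div V) V - (V\cdot\nabla) V\bigr]
$$
pointwise wherever $u$ is smooth. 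To integrate this over $\Om$, I would use a cutoff $\chi_\epsilon = \phi(|\nabla u|/\epsilon)$ vanishing in a neighborhood of $\C$; Proposition \ref{proposition some properties of u} gives $\mathrm{dist}(\C,\pa\Om)>0$ and $|\C|=0$, so $\chi_\epsilon = 1$ on $\pa\Om$ for small $\epsilon$. The error term $\int_\Om W\cdot\nabla\chi_\epsilon$, with $W=(\Div V)V - (V\cdot\nabla)V$, is bounded by $C\int_{\{|\nabla u|\le 2\epsilon\}}(|\nabla^2 u|+|\nabla^2 u|^2)$ using $|V| = O(|\nabla u|)$ and $|\nabla V|\le C|\nabla^2 u|$; this vanishes as $\epsilon\to 0$ by dominated convergence since $|\C|=0$ and $u\in H^2(\Om)$. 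What survives is
$$
\int_\Om (\Delta_F u)^2 - \trace\bigl((\nabla V)^2\bigr) = \int_{\pa\Om}\bigl\{(\Div V)(V\cdot\nu_\Om) - ((V\cdot\nabla)V)\cdot\nu_\Om\bigr\}\,d\H^n.
$$

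For the first boundary term, $u=0$ on $\pa\Om$ forces $\nabla u = |\nabla u|\nu_\Om$; then 1-homogeneity of $\nabla(F^2/2)$ together with Euler's identity $\nabla F\cdot\nu = F$ give $V\cdot\nu_\Om = F(\nabla u)^2/|\nabla u|$. The second term is the delicate one, and here the key observation is the global pointwise identity $V\cdot\nabla u = F(\nabla u)^2$ on $\Om$, a consequence of $\nabla(F^2/2)(\xi)\cdot\xi = F^2(\xi)$ (i.e.\ 2-homogeneity of $F^2/2$). Differentiating this in the direction $V$ and applying Leibniz' rule,
$$
((V\cdot\nabla)V)\cdot\nabla u = V\cdot\nabla\bigl(F(\nabla u)^2\bigr) - \nabla^2 u[V,V],
$$
and substituting $V = F(\nabla u)\nabla F(\nabla u)$ collapses both terms on the right to multiples of $F(\nabla u)^2\,\nabla^2 u[\nabla F(\nabla u),\nabla F(\nabla u)]$, leaving
$$
((V\cdot\nabla)V)\cdot\nu_\Om = \frac{F(\nabla u)^2}{|\nabla u|}\,\nabla^2 u[\nabla F(\nabla u),\nabla F(\nabla u)] \qquad \text{on }\pa\Om.
$$

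To conclude, I would substitute both quantities into the boundary integrand and invoke \eqref{eqn: ARMA id} in the form $\Delta_F u = F(\nabla u)H_\Om^F + \nabla^2 u[\nabla F(\nabla u),\nabla F(\nabla u)]$ on $\pa\Om$: the Hessian terms cancel and what remains is $H_\Om^F F(\nabla u)\cdot F(\nabla u)^2/|\nabla u| = H_\Om^F F(\nabla u)^2 F(\nu_\Om)$ after using $F(\nabla u)/|\nabla u| = F(\nu_\Om)$. The main obstacle is the technical handling of the critical set in the divergence theorem, where the merely Lipschitz regularity of $V$ forces one to work through the cutoff argument above; the algebraic identification of the boundary integrand is then a straightforward unfolding of the homogeneity identities for $F$ and $F^2/2$.
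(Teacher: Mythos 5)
Your proof is correct, and its overall architecture is the same as the paper's: write $(\Div V)^2 - \trace((\nabla V)^2)$ as the divergence of $S = (\Div V)V - \nabla V[V]$, integrate over $\Om$, and then unfold the boundary integrand using $\nabla_F u\cdot\nabla u = F(\nabla u)^2$ (Euler's relation for the $2$-homogeneous $F^2/2$), $\nabla^2F(\nu)[\nu]=0$, and identity \eqref{eqn: ARMA id}. Your derivation of the boundary identity $(\nabla V[V])\cdot\nabla u = \nabla^2u[V,V]$ by differentiating $V\cdot\nabla u = F(\nabla u)^2$ along $V$ is a clean reorganization of the paper's direct index computation \eqref{eqn: equal terms}, and your cancellation at the end is the right one. (Incidentally, the paper's intermediate display has two compensating sign typos in the divergence-theorem step; your version keeps the signs consistent throughout.)

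The one genuine point of divergence is the treatment of the low regularity at the critical set $\C$. You excise $\C$ with a cutoff $\chi_\epsilon = \phi(|\nabla u|/\epsilon)$, apply the smooth divergence theorem to $\chi_\epsilon S$, and kill the commutator error using the crucial cancellation $|S|\lesssim |\nabla u|(1+|\nabla^2 u|)$ against $|\nabla\chi_\epsilon|\lesssim\epsilon^{-1}|\nabla^2 u|$, concluding by dominated convergence thanks to $|\C|=0$ (Proposition~\ref{proposition some properties of u}) and $u\in H^2$. The paper instead isolates the assertion $\Div S = (\Div V)^2 - \trace((\nabla V)^2)$ in the distributional sense as a standalone lemma (Lemma~\ref{lemma bernstein}), proven by mollifying $V$ rather than cutting off near $\C$; the key input in both cases is that $\Div V$ is \emph{constant}, which you use to bound $|S|$ linearly in $|V|$, while the paper uses it to make the doubly-integrated-by-parts term $\sum_{ij}\int V^i_{x_i}(V^j\vphi)_{x_j}$ vanish. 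Your cutoff argument leans on the smoothness of $u$ away from $\C$, whereas the mollification argument only requires $V = \nabla_F u \in H^1$; in the present setting both hypotheses are available, so the two routes are equally valid. The mollification approach is slightly more portable; yours makes the role of the cancellation near $\C$ more visible.
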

In \eqref{reilly id anisop} we use notation introduced in \eqref{eqn: cahn}. Notice there are no regularity issues in \eqref{reilly id anisop} as $u\in H^2(\Om)$ and is smooth in a neighborhood of $\pa\Om$ (Proposition \ref{section torsion potential}). We first prove the following lemma:

\begin{lemma}\label{lemma bernstein}
  If $V\in H^1(\Om;\R^{n+1})$ is such that $\Div V=c_0$, then $S=(\Div V)V-\nabla V[V]\in L^1(\Om;\R^{n+1})$ with $\Div S\in L^1(\Om)$ given by
  \begin{equation}\label{eqn: Bern id}
  \Div\,S=(\Div V)^2-\trace((\nabla V)^2)\,.
  \end{equation}
\end{lemma}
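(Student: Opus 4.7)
The plan is to first verify the Bernstein-type identity \eqref{eqn: Bern id} pointwise for smooth vector fields, and then extend it to $V\in H^1$ by mollifying on relatively compact subdomains.

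For the smooth case, I would simply apply the Leibniz rule component-wise. Writing $S^i = (\Div V)V^i - \sum_j (\partial_j V^i) V^j$, direct differentiation gives
\begin{equation*}
\partial_i S^i = V\cdot\nabla(\Div V) + (\Div V)^2 - V\cdot\nabla(\Div V) - \sum_{ij}(\partial_j V^i)(\partial_i V^j),
\end{equation*}
where the two cross terms cancel after commuting mixed partials. Thus $\Div S = (\Div V)^2 - \trace((\nabla V)^2)$ pointwise for any $V\in C^2$, regardless of whether $\Div V$ is constant. (This constant divergence hypothesis is only needed later for integrability.)

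For the general case, observe first that $S\in L^1(\Om;\R^{n+1})$ and the right-hand side of \eqref{eqn: Bern id} is in $L^1(\Om)$: indeed, $(\Div V)V=c_0V\in L^2$, while each component of $\nabla V[V]$ is a sum of products of two $L^2$ functions, hence in $L^1$ by Cauchy-Schwarz; similarly, $c_0^2$ is bounded and $\trace((\nabla V)^2)$ is a sum of products of two $L^2$ entries of $\nabla V$. To verify \eqref{eqn: Bern id} distributionally, fix $\vphi\in C^1_c(\Om)$, pick an open set $U$ with $\spt\vphi\subset U\cc\Om$, and for $\e<\dist(U,\pa\Om)$ set $V_\e=V*\rho_\e$ on $U$. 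Then $V_\e\in C^\infty(U;\R^{n+1})$, $\Div V_\e = c_0*\rho_\e = c_0$ on $U$, and $V_\e\to V$ in $H^1(U)$. Defining $S_\e = c_0V_\e - \nabla V_\e[V_\e]$, the smooth case yields $\Div S_\e = c_0^2 - \trace((\nabla V_\e)^2)$ classically on $U$.

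To conclude, I would pass to the limit. Since $V_\e\to V$ in $L^2(U)$ and $\nabla V_\e\to \nabla V$ in $L^2(U)$, Cauchy-Schwarz gives $\nabla V_\e[V_\e]\to \nabla V[V]$ in $L^1(U)$ and $\trace((\nabla V_\e)^2)\to \trace((\nabla V)^2)$ in $L^1(U)$, so that $S_\e\to S$ in $L^1(U)$ as well. Integration by parts then gives
\begin{equation*}
-\int_\Om S\cdot \nabla\vphi = \lim_{\e\to 0}\int_U \vphi\,\Div S_\e = \int_\Om \vphi\bigl[c_0^2 - \trace((\nabla V)^2)\bigr],
\end{equation*}
which identifies the distributional divergence of $S$ with the $L^1$ function on the right-hand side of \eqref{eqn: Bern id}, completing the proof. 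The only delicate point is tracking the $L^1$-continuity of the quadratic expressions in $\nabla V_\e$ and $V_\e$, but this is immediate from Cauchy-Schwarz given the $H^1$-convergence of $V_\e$.
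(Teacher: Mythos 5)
Your proof is correct, and it follows the same overall strategy as the paper: verify the identity for smooth fields, then mollify $V$ on compactly contained subdomains and pass to the limit using $H^1$-convergence and Cauchy--Schwarz. The one genuine difference is in the smooth step. The paper proves the \emph{weak} form of the identity directly, namely $\int_\Om\nabla V[V]\cdot\nabla\vphi=-\int_\Om\vphi\,\trace((\nabla V)^2)$, by integrating by parts twice and using $\Div V=c_0$ to kill the resulting term $c_0\int_\Om\Div(V\vphi)=0$. You instead prove the \emph{pointwise} identity $\Div S=(\Div V)^2-\trace((\nabla V)^2)$ for any $C^2$ field, observing that the two cross terms $V\cdot\nabla(\Div V)$ cancel identically, so that constancy of $\Div V$ is not actually needed at this stage; this is a small conceptual clarification, showing that the hypothesis $\Div V=c_0$ plays no role in the algebra and is only invoked (if at all) for bookkeeping. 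Both routes are equally rigorous, and your limit passage (using $L^2$ convergence of $V_\e$ and $\nabla V_\e$ to upgrade the quadratic quantities to $L^1$ convergence) is exactly the paper's final step.
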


\begin{proof}[Proof of Lemma \ref{lemma bernstein}]
  Since $\Div V$ is constant we immediately find $\Div((\Div V)V)=(\Div V)^2$. First, we assume that $V$ is smooth. If $\vphi\in C^\infty_c(\Om)$, then
  \[
  \int_\Om \nabla V[V]\cdot\nabla\vphi=\sum_{ij}\int_\Om V^i_{x_j}V^j \vphi_{x_i}=\sum_{ij}\int_\Om V^i_{x_j}\,(V^j\vphi)_{x_i}-\int_\Om\,\vphi\,\trace((\nabla V)^2)\,,
  \]
  where
  \begin{eqnarray*}
  \sum_{ij}\int_\Om  V^i_{x_j}\,(V^j\vphi)_{x_i}
  &=&-\sum_{ij}\int_\Om V^i\,(V^j\vphi)_{x_i x_j}
  =\sum_{ij}\int_\Om V^i_{x_i}\,(V^j\vphi)_{x_j}
  \\
  &=&c_0 \sum_j\int_\Om (V^j\vphi)_{x_j}=0
  \end{eqnarray*}
  as $V^j\vphi=0$ on $\pa\Om$. This proves that if $V\in H^1(\Om)\cap C^{\infty}(\Om)$, and $\Div V$ is constant, then
  \begin{equation}
    \label{b}
      \int_\Om \nabla V[V]\cdot\nabla\vphi=-\int_\Om\vphi\,\trace((\nabla V)^2)\,,\qquad\forall \vphi\in C^\infty_c(\Om)\,.
  \end{equation}
  Now let $V\in H^1(\Om)$ with $\Div V=c_0$. Fix $\vphi\in C^\infty_c(\Om)$ and consider an $\e$-regularization $V_\e=V\star\rho_\e$ of $V$, so that $V_\e$ is smooth on $\Om_\e=\{x\in\Om:\dist(x,\pa\Om)>\e\}$. Since $\Div V_\e=(\Div V)\star\rho_\e$ is constant on $\Om_\e$, we can apply \eqref{b} to $V_\e$ if $\e$ is small enough to have $\spt\vphi\cc\Om_\e$. Since $V_\e\to V$ in $H^1$ on an open neighborhood of $\spt\vphi$, we can pass to the limit as $\e\to 0$ and deduce that \eqref{b} holds without the smoothness assumption on $V$.
\end{proof}

\begin{proof}[Proof of Proposition \ref{proposition reilly}] Since $u\in H^2(\Om)$, we apply Lemma \ref{lemma bernstein} with $V=\nabla_Fu$ to find
\[
\int_\Omega (\Delta_F u)^2 -\trace\left((\nabla(\nabla_Fu))^2\right)=\int_\Om\,\Div\big(\Delta_Fu\nabla_Fu-\nabla(\nabla_Fu)[\nabla_Fu]\big)\,.
\]
Since $u$ is smooth in a neighborhood of $\pa\Om$ with $\nu_\Om=\nabla u/|\nabla u|$ we can apply the divergence theorem to get
\[
\int_\Omega (\Delta_F u)^2 -\trace\left((\nabla(\nabla_Fu))^2\right)=\int_{\pa\Om}\,\big(\nabla(\nabla_Fu)[\nabla_Fu]-\Delta_Fu\nabla_Fu\big)\cdot\frac{\nabla u}{|\nabla u|}\,.
\]
 By
$\na \frac{F^2}{2}(\nabla u)\cdot\nabla u=F(\nabla u)^2$
 and \eqref{eqn: ARMA id}, we have
\begin{eqnarray*}
  -\int_{\pa\Om}\Delta_Fu\,\nabla_Fu\cdot\frac{\nabla u}{|\nabla u|}=
  \int_{\pa\Om}H_\Om^F\,F(\nabla u)^2 F(\nu_{\Om})
  -\nabla^2u[\nabla F(\nabla u),\nabla F(\nabla u)]\,\frac{F(\nabla u)^2}{|\nabla u|}\,.
\end{eqnarray*}
Thus, in order to complete the proof, it suffices to show that
\begin{equation}\label{eqn: equal terms}
\nabla(\na_F u)[\na_F u]\cdot \nabla u=F(\na u)^2\,\nabla^2u [ \na F(\na  u) , \na F(\na u)]\,.
\end{equation}
We have that
\[
[\nabla(\na_F u)]^{ij}=(F(\nabla u)F_{\xi_i}(\nabla u))_{x_j}=\sum_k F(\nabla u)F_{\xi_i\xi_k}(\nabla u)\,u_{x_kx_j}
+F_{\xi_i}(\nabla u)F_{\xi_k}(\nabla u)u_{x_jx_k}\,.
\]
Hence
\begin{eqnarray*}
\nabla(\na_F u)[\nabla_Fu]\cdot\nabla u&=&\sum_i \big(\nabla(\na_F u)[\nabla_Fu]\big)^iu_{x_i}
\\
&=&F(\nabla u)\sum_{ij}[\nabla(\na_F u)]^{ij}F_{\xi_j}(\nabla u)u_{x_i}
\\
&=&F(\nabla u)^2\sum_{kj} F_{\xi_j}(\nabla u)u_{x_kx_j}\sum_iF_{\xi_i\xi_k}(\nabla u)\,u_{x_i}
\\
&&+F(\nabla u) \sum_{kj} F_{\xi_k}(\nabla u)F_{\xi_j}(\nabla u)\,u_{x_jx_k}\,\sum_i F_{\xi_i}(\nabla u)\,u_{x_i}\,.
\end{eqnarray*}
Again, $\na F(\nu)[\nu]=0$ by homogeneity, so
\[
\sum_iF_{\xi_i\xi_k}(\nabla u)\,u_{x_i}=\nabla^2F(\nabla u)[\nabla u,e_k]=0\,.
\]
At the same time, $\sum_i F_{\xi_i}(\nabla u)\,u_{x_i}=\nabla F(\nabla u)\cdot\nabla u=F(\na u)$, so \eqref{eqn: equal terms} is proven.
\end{proof}

\subsection{Universal Lipschitz estimate}\label{section lipschitz estimate} We now turn to the proof of the following gradient bound for $u$, which is notably independent of the ellipticity constants of $F$.

\begin{proposition}\label{proposition lipschitz estimate}
  If $F$ is an elliptic integrand, $\Om$ is a bounded open smooth set with $H^F_\Om>0$ on $\pa\Om$ and $u$ is the $F$-anisotropic torsion potential of $\Om$, then
  \begin{equation}
    \label{lipschitz estimate}
    \sup_{\Om}|\nabla u|\le \frac1{m_F\,H_{\Om,\inf}^F}
  \end{equation}
  where $H^F_{\Om,\inf}$ denotes the infimum of $H^F_\Om$ over $\pa\Om$.
\end{proposition}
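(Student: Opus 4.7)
The strategy is a direct maximum principle argument for the auxiliary function $P(x):=F(\nabla u(x))^2$, combined with Hopf's lemma at the boundary. Since $m_F|\nabla u|\le F(\nabla u)$ by \eqref{mm2}, it suffices to prove $\sup_\Om F(\nabla u)\le 1/H^F_{\Om,\inf}$. Proposition~\ref{proposition some properties of u} guarantees that $u$ is smooth on $\ov\Om\setminus\C$ (and in particular in a neighborhood of $\pa\Om$), strictly negative inside $\Om$, and has $\nabla u\ne 0$ on $\pa\Om$; accordingly $P$ is continuous on $\ov\Om$, vanishes on the critical set $\C=\{\nabla u=0\}$, and is strictly positive on $\pa\Om$.

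The heart of the argument is to rule out an interior maximum of $P$ in $\Om\setminus\C$. Suppose for contradiction that $P$ attains its supremum at some $x_0\in\Om\setminus\C$. Writing $V:=\nabla_F u$, $A:=\nabla^2(F^2/2)(\nabla u)$, $U:=\nabla^2 u$, a direct computation gives $\nabla P=2UV$, so the first-order condition $\nabla P(x_0)=0$ translates into the vector identity $UV=0$ at $x_0$. Differentiating $P$ twice yields
\[
\trace\bigl(A\,\nabla^2P\bigr)=2\trace\bigl((AU)^2\bigr)+2\sum_k V^k\, L_0(u_{x_k}),\qquad L_0 v:=\sum_{ij}A_{ij}(\nabla u)\,v_{x_ix_j};
\]
differentiating the equation $\sum_{ij}A_{ij}(\nabla u)u_{x_ix_j}=1$ in $x_k$ gives $L_0(u_{x_k})=-\sum_{ij\ell}A_{ij,\xi_\ell}(\nabla u)\,u_{x_\ell x_k}u_{x_ix_j}$, so summing against $V^k$ produces a factor $(UV)_\ell$, which vanishes at $x_0$ by $UV=0$. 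Hence
\[
\trace(A\,\nabla^2 P)(x_0)=2\,\trace((AU)^2)\ge \frac{2\,(\trace(AU))^2}{n+1}=\frac{2}{n+1}>0,
\]
where Cauchy--Schwarz is applied to the real eigenvalues of $AU$ (which is similar via $A^{1/2}$ to the symmetric matrix $A^{1/2}UA^{1/2}$, and whose eigenvalues sum to $\trace(AU)=\Delta_Fu=1$). On the other hand, $\nabla^2 P(x_0)\preceq 0$ together with $A\succ 0$ forces $\trace(A\,\nabla^2 P)(x_0)\le 0$, a contradiction. Since $P\equiv 0$ on $\C$ while $P>0$ on $\pa\Om$, this forces $\sup_\Om P=\max_{\pa\Om}P$.

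At a point $y_0\in\pa\Om$ realizing this maximum, $\nabla u=|\nabla u|\nu_\Om$ and $V=F(\nabla u)\nabla F(\nu_\Om)$ along $\pa\Om$, so the vanishing of the tangential gradient $\nabla_TP(y_0)=0$ forces the vector $U\,\nabla F(\nu_\Om)$ to be parallel to $\nu_\Om(y_0)$. Combined with the boundary identity \eqref{eqn: ARMA id}, which (since $\Delta_F u=1$) reads $\nabla^2u[\nabla F(\nu_\Om),\nabla F(\nu_\Om)]=1-F(\nabla u)H^F_\Om$ on $\pa\Om$, this gives
\[
\nabla P(y_0)\cdot\nu_\Om(y_0)=\frac{2F(\nabla u(y_0))}{F(\nu_\Om(y_0))}\bigl(1-F(\nabla u(y_0))\,H^F_\Om(y_0)\bigr).
\]
Hopf's lemma forces this to be $\ge 0$, whence $F(\nabla u(y_0))\,H^F_\Om(y_0)\le 1$; combining with the previous paragraph and Fenchel gives \eqref{lipschitz estimate}.

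The delicate point is the cancellation $\sum_k V^k L_0(u_{x_k})=0$ at $x_0$: the third-derivative contributions in $L_0(u_{x_k})$ survive only through their contraction with $(UV)_\ell$, which vanishes precisely because of the first-order critical-point condition $\nabla P(x_0)=0$. This is what makes the Lipschitz estimate independent of the ellipticity constants $\lambda_*,\Lambda_*$ of $F^2/2$ from \eqref{elliptic integrand GF}: the only facts about $A$ used in the proof are its positivity (yielding both the Cauchy--Schwarz bound and the sign constraint $\trace(A\,\nabla^2 P)\le 0$ at an interior maximum) and the identity $\trace(AU)=1$, which is nothing but the PDE.
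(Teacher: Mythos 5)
Your proof is correct, and it follows a genuinely different route from the paper's. The paper proceeds by constructing the $F$-anisotropic signed distance function $\g$ of $\Om$, showing that $\Delta_F\g\ge H^F_{\Om,\inf}$ in the viscosity sense, and using the comparison principle to obtain the barrier inequality $\g\le H^F_{\Om,\inf}\,u$ (their step two); this gives the boundary gradient bound directly, and the interior reduction (that $|\nabla u|$ attains its maximum on $\pa\Om$) is then obtained by citing \cite[Theorem 15.1]{GT}. You instead run a self-contained Bernstein argument on $P=F(\nabla u)^2$: the interior computation $\trace(A\nabla^2P)=2\trace((AU)^2)>0$ at a hypothetical critical point of $P$ in $\Om\setminus\C$ — after the third-derivative terms are killed by the condition $\nabla P(x_0)=UV(x_0)=0$ — rules out an interior maximum; since $P\equiv 0$ on $\C$ while $P>0$ on $\pa\Om$, the maximum lands on $\pa\Om$; and there the first-order boundary conditions combined with \eqref{eqn: ARMA id} produce exactly $F(\nabla u(y_0))\,H^F_\Om(y_0)\le 1$. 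Each approach has its merits: the paper's barrier construction makes the geometric content explicit and reuses the signed distance machinery already developed in Section \ref{section distance function}, while yours avoids the viscosity-solution technicalities around the cut locus of $\g$ and replaces the appeal to \cite{GT} with a clean, short computation in which the independence of the ellipticity constants is visible at a glance (only $A\succ 0$ and $\trace(AU)=1$ are used). Two small terminological points: the boundary inequality $\nabla P(y_0)\cdot\nu_\Om(y_0)\ge 0$ is just the elementary first-order necessary condition at a boundary maximum (Hopf's lemma would give the strict inequality, which you don't need); and the bound $m_F|\nabla u|\le F(\nabla u)$ follows from one-homogeneity and \eqref{mm1} rather than from \eqref{mm2} or Fenchel.
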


\begin{proof} Recall from Proposition~\ref{proposition signed distance} that the $F$-anisotropic signed distance function $\g$ of $\Om$, defined in \eqref{anisotropic signed distance}, is smooth in a neighborhood of $\pa\Om$ with $\g<0$ in $\Om$ and
\[
\nabla\g(y)=\frac{\nu_\Om(y)}{F(\nu_\Om(y))}\qquad\forall y\in\pa\Om\,.
\]
Hence, $F(\nabla\g)=1$ on $\pa\Om$, and so
\[
\Delta_F\g=\Div(F(\nabla\g)\nabla F(\nabla\g))=\Div(\nabla F(\nu_\Om))=H^F_\Om\qquad\mbox{on $\pa\Om$}\,
\]
by the zero-homogeneity of $\nabla F$ and \eqref{wx1}.

\medskip

\noindent {\it Step one}: We show that $\g$ satisfies $\Delta_F\g\ge H^F_{\Om,\inf}$ in the viscosity sense in $\Om$. Indeed, let $P$ denote a second order polynomial touching $\g$ from above at some $x\in\Om$, i.e., for some $r>0$, $P\ge \g$ on $B_r(x)\subset\Om$ with $P(x)=\g(x)$. We want to prove that
\[
\Delta_FP(x)\ge H^F_{\Om,\inf}\,.
\]
Let $y\in p(x)$, where $p$ is the projection map as defined in \eqref{eqn: projection}, and note that $x_t=x+t\,(y-x)\in\Om$ for all $t\in(0,1)$. In particular, let $r_t\in(0,r)$ be such that $B_{r_t}(x_t)\subset\Om$. We claim that the second order polynomial $Q_t$ defined by
\[
Q_t(z)=P(z-x_t+x)+F_*(x_t-x)\,,\qquad z\in\R^{n+1}
\]
is such that
\begin{equation}
  \label{Qt}
  Q_t(x_t)=\g(x_t)\,, \qquad \qquad Q_t(z)\ge \g(z)\qquad\forall z\in B_{r_t}(x_t)\,.
\end{equation}
Indeed,
\[
Q_t(x_t)=P(x)+F_*(x_t-x)=\g(x)+t\,F_*(y-x)=(1-t)\g(x)=\g(x_t)\,,
\]
while if $z\in B_{r_t}(x_t)$ then $z-x_t+x\in B_r(x)\subset\Om$, and thus
\begin{eqnarray*}
Q_t(z)&\ge&\g(z-x_t+x)+F_*(x_t-x)=-\inf\big\{F_*(w-(z-x_t+x)):w\in\Om^c\big\}+F_*(x_t-x)
  \\
  &\ge&-\inf\big\{F_*(w-z):w\in\Om^c\big\}-F_*(x_t-x)+F_*(x_t-x)=\g(z)
\end{eqnarray*}
as $z\in\Om$. Now, if $t$ is close enough to $1$, then $x_t$ lies in the neighborhood $N$ of $\pa\Om$ in which $\g$ is smooth. Hence, for $t$ close enough to $1$, \eqref{Qt} implies that
\[
\Delta_FP(x)=\Delta_FQ_t(x_t)\ge\Delta_F\g(x_t)\,.
\]
Letting $t\to 1^-$, we find $\Delta_FP(x)\ge\Delta_F\g(y)=H^\Om_F(y)\ge H^\Om_{F,\inf}$, as required.

\bigskip

\noindent {\it Step two}: We claim that
\begin{equation}
  \label{gamma minore u}
  \g(x)\le H^F_{\Om,\inf}\,u(x)\qquad\forall x\in\Om\,.
\end{equation}
Both functions are equal to zero on $\pa\Om$ and are strictly negative in $\Om$, so the claim follows by showing that, for every $\a>0$,
\[
v(x)=(H^F_{\Om,\inf}-\a)u(x)-\g(x)\ge 0\qquad\forall x\in\Om\,.
\]
In turn, since $v$ is continuous in $\ov{\Om}$ and $v=0$ on $\pa\Om$, it suffices to show that $v$ has no interior minimum points in $\Om$.

Arguing by contradiction, let $x_0\in\Om$ be a local minimum point of $v$ in $\Om$. If $\nabla u(x_0)\ne 0$, then $u$ is smooth nearby $x_0$. Moreover, since $x_0$ is a local minimum of $v$, we have that
\[
(H^F_{\Om,\inf}-\a)u-[(H^F_{\Om,\inf}-\a)u(x_0)-\g(x_0)]
\]
touches $\g$ from above at $x_0$, and thus, by Step one,
\[
H^F_{\Om,\inf}\le\Delta_F[(H^F_{\Om,\inf}-\a)u](x_0)=(H^F_{\Om,\inf}-\a)\,\Delta_Fu(x_0)=H^F_{\Om,\inf}-\a\,,
\]
a contradiction to $\a>0$. This implies that $\nabla u(x_0)=0$. Hence $\g$ cannot be differentiable at $x_0$: otherwise $\nabla v(x_0)=0$ would imply $\nabla\g(x_0)=0$, whereas $\nabla\g=(\nu_\Om\circ p)/F(\nu_\Om\circ p)\ne 0$ at every differentiability point of $\g$.

We are thus left to consider the possibility of a local minimum point $x_0\in\Om$ of $v$, where $\nabla u(x_0)=0$ and $\g$ is not differentiable at $x_0$. By local minimality, for every $\nu\in\S^n$ and $t>0$ small enough, we have
\[
(H^F_{\Om,\inf}-\a)\frac{u(x_0+t\,\nu)-u(x_0)}t-\frac{\g(x_0+t\,\nu)-\g(x_0)}t\ge0\,.
\]
So $\nabla u(x_0)=0$ implies
\[
\liminf_{t\to 0^+}\frac{\g(x_0+t\,\nu)-\g(x_0)}t\le 0\,.
\]
Since $\g$ is not differentiable at $x_0$, there exists $\nu_0\in\S^n$ such that
\[
\liminf_{t\to 0^+}\frac{\g(x_0+t\,\nu_0)-\g(x_0)}t< 0\,,
\]
which in turn implies
\begin{equation}
  \label{minus infinity}
  \liminf_{t\to 0^+}\frac{\g(x_0+t\,\nu_0)+\g(x_0-t\nu_0)-2\g(x_0)}{t^2}=-\infty\,.
\end{equation}
We will now obtain a contradiction to \eqref{minus infinity} by proving that for every $x\in\Om$ there exists a second order polynomial $P$ touching $\g$ from below at $x$.

Indeed, since $\g$ is smooth in a neighborhood $N$ of $\pa\Om$, there exists $C>0$ such that for every $z\in N$ there exists a second order polynomial $P_z$ with the properties
\[
P_z(z)=\g(z)\,,\qquad\nabla^2P_z\ge-C\,\Id\,,\qquad\mbox{$\g(w)\ge P_z(w)$ for every $w$ near $z$}\,.
\]
Choose $x\in\Om$ with $B_r(x)\subset\Om$, let $y\in p(x)$, and set $x_t=x+t(y-x)$. For $t$ sufficiently close to $1$, $x_t\in N$ and we can find $r_t\in(0,r)$ such that $B_{r_t}(x_t)\subset\Om$ and, setting $P_t=P_{x_t}$,
\[
P_t(x_t)=\g(x_t)\,,\qquad\nabla^2P_t\ge-C\,\Id\,,\qquad\mbox{$\g(w)\ge P_t(w)$ for every $w\in B_{r_t}(x_t)$}\,.
\]
Set
\[
P(w)=P_t(w-(x-x_t))+\g(x)-\g(x_t)\,.
\]
Clearly $P(x)=\g(x)$. If $w\in B_{r_t}(x)\subset\Om$, then $w-(x-x_t)\in B_{r_t}(x_t)$ and thus
\begin{align*}
P(w)&\le\g(w-(x-x_t))+\g(x)-\g(x_t) \\
&=\g(w-(x-x_t))-t\g(x) =\g(w-(x-x_t))-t\,F_*(y-x)\,.
\end{align*}
Now, since $w-(x-x_t)\in B_{r_t}(x_t)\subset\Om$, by definition of $\g$ there exists $\bar w\in\pa\Om$ such that
\[
\g(w-(x-x_t))=-F_*(\bar w-(w-(x-x_t)))
\]
so that, by the subadditivity of $F_*$,
\begin{align*}
P(w)&\le-F_*(\bar w-(w-(x-x_t)))-F_*(t(y-x))\\
&= -F_*(\bar w-(w+t(y-x)))-F_*(t(y-x))\\
&\le-F_*(\bar w-w)\le\g(w)\,,
\end{align*}
again by definition of $\g$ in $\Om$, and thanks to $w\in\Om$. This completes the proof of step two.

\bigskip

\noindent {\it Step three}: We claim that if $u$ is the $F$-anisotropic torsion potential of $\Om$, then
\begin{equation}
  \label{gt}
  \sup_\Om|\nabla u|\le\frac1{m_F\,H^F_{\Om,\inf}}\,.
\end{equation}
First, let us  pick $x\in\Om$ and $y\in\pa\Om$. By \eqref{gamma minore u}, $-u(x)\le-\g(x)/H^F_{\Om,\inf}$, thus
\[
\frac{|u(x)-u(y)|}{|x-y|}=\frac{-u(x)}{|x-y|}\le\frac1{H^F_{\Om,\inf}}\,\frac{-\g(x)}{|x-y|}\,,
\]
so, first using $|x-y|/m_F\ge F_*(y-x)=-\g(x)$ and then letting $x\to y$, we have proven
\begin{equation}
  \label{gt0}
  \sup_{\pa\Om}|\nabla u|\le\frac1{m_F\,H^F_{\Om,\inf}}\,.
\end{equation}
We now prove that $|\nabla u|$ (which is a H\"older continuous function on $\ov{\Om}$) achieves its maximum on $\pa\Om$.
Recall that $u$ is smooth and solves $1=\Delta_Fu$ pointwise on $\Om\setminus\C$, with
\[
\Delta_Fu=\sum_{ij}\Big(F(\nabla u)\,F_{\xi_i\xi_j}(\nabla u)+F_{\xi_i}(\nabla u)F_{\xi_j}(\nabla u)\Big)\,u_{x_ix_j}\,.
\]
Since the operator $\Delta_F$ is smooth on $\Om \setminus \C$, by \cite[Theorem 15.1]{GT}, the maximum of $|\nabla u|$ is attained on $\pa (\Om \setminus \C)$. Then, since $|\na u| = 0$ on $\C$,
\[
\sup_\Om|\nabla u| \le \sup_{\pa\Om}|\nabla u| \le \frac1{m_F\,H^F_{\Om,\inf}}\,.
\]
\end{proof}
\subsection{The anisotropic Heintze-Karcher and bounds on the torsion potential}\label{section hk with bounds} In this section we give a proof of the anisotropic Heintze-Karcher inequality \eqref{hk inq anisotropic} that uses the properties of the anisotropic torsion potential, in the spirit of Ros' argument \cite{Ros}; see Proposition \ref{proposition hessian bound} below. We recall from the introduction the definitions, for an open set with smooth boundary $\Om$,
\begin{eqnarray*}
\eta_F(\Om)&=&1-\frac{(n+1)|\Om|}{\int_{\pa\Om}\big(n\,F(\nu_\Om)/H^F_\Om\big)}\,,\qquad\mbox{if $H^F_\Om>0$}\,,
\\
\delta_F(\Om)&=&\Big(\frac1{\F(\Om)}\,\int_{\pa\Om}\Big|\frac{H_{\Om}^F}{H_{\Om}^{F,0}}-1\Big|^2F(\nu_{\Om})\Big)^{1/2}\,,
\end{eqnarray*}
where $H^{F,0}_\Om=n\,\F(\Om)/(n+1)|\Om|$. We have the following relation between $\eta_F$ and $\de_F:$
%

\begin{lemma}\label{lemma magenta} If $\k>0$ and $\Om$ is a smooth bounded open set with $H_{\Om}^{F}\ge \k H_{\Om}^{F,0}$ on $\pa\Om$, then
\[
\eta_F(\Om)\le \frac{1}{\k\,\F(\Om)}\int_{\pa\Om}\Big|\frac{H^F_\Om}{H^{F,0}_\Om}-1\Big|\,F(\nu_\Om)\,.
\]
In particular,
\[
\eta_F(\Om)\le\de_F(\Om)/\k.
\]
\end{lemma}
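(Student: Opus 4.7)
The approach is purely algebraic: I will rewrite $\eta_F(\Om)$ as a single boundary integral involving the pointwise quantity $H^F_\Om/H^{F,0}_\Om - 1$, exploit the hypothesis $H^F_\Om \ge \k H^{F,0}_\Om$ to bound the weight appearing in front, and finally apply Cauchy--Schwarz to deduce the bound in terms of $\de_F$.

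In detail, setting $A = \int_{\pa\Om} n F(\nu_\Om)/H^F_\Om$ so that $\eta_F(\Om) = (A - (n+1)|\Om|)/A$, the key observation is that $H^{F,0}_\Om$ is a \emph{constant}, equal to $n\,\F(\Om)/((n+1)|\Om|)$. This allows one to rewrite
\[
(n+1)|\Om| = \int_{\pa\Om} \frac{n F(\nu_\Om)}{H^{F,0}_\Om}\,d\H^n,
\]
and therefore
\[
A - (n+1)|\Om| = \int_{\pa\Om} \frac{n F(\nu_\Om)}{H^F_\Om}\Big(1 - \frac{H^F_\Om}{H^{F,0}_\Om}\Big)\,d\H^n.
\]
Taking absolute values, applying the hypothesis in the form $1/H^F_\Om \le 1/(\k H^{F,0}_\Om)$, and then substituting $n/H^{F,0}_\Om = (n+1)|\Om|/\F(\Om)$ produces
\[
|A - (n+1)|\Om|| \le \frac{(n+1)|\Om|}{\k\,\F(\Om)} \int_{\pa\Om} F(\nu_\Om)\,\Big|\frac{H^F_\Om}{H^{F,0}_\Om} - 1\Big|\,d\H^n.
\]

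To close the first inequality of the lemma I divide by $A$. Either $\eta_F(\Om) \le 0$, in which case the assertion is trivial because the right-hand side is non-negative, or $A > (n+1)|\Om|$, in which case the factor $(n+1)|\Om|/A$ is less than $1$ and the displayed inequality directly yields the claimed bound. The second inequality is then an immediate consequence of the Cauchy--Schwarz inequality and the definition of $\de_F(\Om)$:
\[
\int_{\pa\Om} F(\nu_\Om)\,\Big|\frac{H^F_\Om}{H^{F,0}_\Om} - 1\Big| \le \F(\Om)^{1/2}\Big(\int_{\pa\Om} F(\nu_\Om)\,\Big|\frac{H^F_\Om}{H^{F,0}_\Om} - 1\Big|^2\Big)^{1/2} = \F(\Om)\,\de_F(\Om).
\]

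There is no substantive obstacle: the lemma amounts to a one-line identity combined with an elementary pointwise estimate and Cauchy--Schwarz. The only conceptual point worth flagging is that writing $(n+1)|\Om|$ as a boundary integral of the constant quantity $nF(\nu_\Om)/H^{F,0}_\Om$ collapses the Heintze--Karcher deficit into a single pointwise integrand, on which the hypothesis $H^F_\Om \ge \k H^{F,0}_\Om$ can be invoked directly.
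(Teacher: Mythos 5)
Your argument is correct and follows essentially the same route as the paper: rewrite $(n+1)|\Om|$ as $\int_{\pa\Om} n F(\nu_\Om)/H^{F,0}_\Om$, collapse the deficit into a single boundary integrand, apply $1/H^F_\Om\le 1/(\k H^{F,0}_\Om)$, and finish with Cauchy--Schwarz. The only (harmless) difference is cosmetic: you justify $(n+1)|\Om|/A\le 1$ by a case split on the sign of $\eta_F(\Om)$, whereas the paper invokes the anisotropic Heintze--Karcher inequality \eqref{hk inq anisotropic}, which they have already established.
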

\begin{proof} Since $(n+1)|\Om|=\int_{\Om}n\,F(\nu_\Om)/H^{F,0}_\Om$, we find
\begin{eqnarray*}
\eta(\Om)
&=&
\frac{1}{\int_{\pa \Om}(n\,F(\nu_\Om)/H^F_\Om)}\,
\Big(\int_{\pa\Om}\frac{n\,F(\nu_\Om)}{H_{\Om}^{F}}-\int_{\pa\Om}\frac{n\,F(\nu_\Om)}{H_{\Om}^{F,0}}\Big)
\\
&=&
\frac{1}{\int_{\pa\Om}(\,F(\nu_\Om)/H^F_\Om)}
\,\int_{\pa\Om}\Big(1-\frac{H_{\Om}^{F}}{H_{\Om}^{F,0}}\Big)\,\frac{F(\nu_\Om)}{H_{\Om}^{F}}
\\
&\le&
\frac{1}{\k\,H^{F,0}_{\Om}\,\int_{\pa\Om}(\,F(\nu_\Om)/H^F_\Om)}
\,\int_{\pa\Om}\Big|1-\frac{H_{\Om}^{F}}{H_{\Om}^{F,0}}\Big|\,F(\nu_\Om)
\\
&=&
\frac{(n+1)|\Om|}{\int_{\pa\Om}(n\,F(\nu_\Om)/H^F_\Om)}
\,\frac{1}{\k\,\F(\Om)}\,\int_{\pa\Om}\Big|1-\frac{H_{\Om}^{F}}{H_{\Om}^{F,0}}\Big|\,F(\nu_\Om)
\end{eqnarray*}
where the first factor is less than $1$ exactly by the anisotropic Heintze-Karcher inequality. The proposition now follows from H\"{o}lder's inequality.
\end{proof}

\begin{proposition}\label{proposition hessian bound}
  If $F\in C^\infty(\S^n)$ is an elliptic integrand, $\Om$ is a bounded open smooth set with $H^F_\Om>0$, and $u$ is the $F$-torsion potential of $\Om$, then
  \begin{eqnarray}
    \label{hk anisotropic plus}
    \frac{|\Om|}{n+1}\Big(\int_{\pa\Om}\frac{n\,F(\nu_\Om)}{H^F_\Om}-(n+1)|\Om|\Big)&=&
    \int_{\pa\Om}\frac{F(\nu_\Om)}{H^F_\Om}\Big(\int_\Om\trace((\nabla(\nabla_Fu))^2)-\frac{(\Delta_Fu)^2}{n+1}\Big)
    \\\nonumber
    &+&\int_{\pa\Om}\frac{F(\nu_\Om)}{H^F_\Om}\int_{\pa\Om}H^F_\Om F(\nabla u)^2 F(\nu_\Om)-\Big(\int_{\pa\Om}F(\nabla u) F(\nu_\Om)\Big)^2\,.
  \end{eqnarray}
  Moreover: \begin{itemize}
  \item[(a)] The right-hand side of \eqref{hk anisotropic plus} is non-negative, so \eqref{hk anisotropic plus} implies the anisotropic Heintze-Karcher inequality \eqref{hk inq anisotropic};
  \item[(b)] If the right-hand side is equal to zero and $\Om$ is connected, then $\Om=x+r\,K_F$ for some $r>0$;
  \item[(c)] We have
  \begin{eqnarray}
    \label{estimate 1}
      C(n)\frac{\Lambda_*}{\lambda_*}\,|\Om|\,\eta_F(\Om)&\ge&\int_\Om\Big|\nabla(\nabla_Fu)-\frac{\Id}{n+1}\Big|^2\,,
      \\
      \label{estimate 1b}
      \frac{C(n)}{\lambda_*^2}\,|\Om|\,\eta_F(\Om)&\ge&\int_\Om\Big|\na^2u-\frac{\na^2(F_*^2/2)(\na F(\na u))}{n+1}\Big|^2\,,
  \end{eqnarray}
  and, if $\de_F(\Om)<1/2$ and $H^F_\Om\ge\k H^{F,0}_\Om$ for some $\k>0$ on $\pa\Om$, then
\begin{equation} \label{estimate 2}
  \frac{C(n)}{\k}\,\frac{|\Om|^{(n+2)/(n+1)}}{|K_F|^{1/(n+1)}}\,
  \Big\{\eta_F(\Om)+\frac{\de_F(\Om)^2}\k\Big\}
  \ge\int_{\pa\Om}\Big|\frac{(n/H^{F,0}_\Om)}{n+1}-F(\nabla u)\Big|^2\,F(\nu_\Om)\,.
  \end{equation}
  \end{itemize}
\end{proposition}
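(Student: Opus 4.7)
The plan is to adapt Ros' argument via the torsion potential $u$. From $\Delta_Fu=1$ and the divergence theorem, $|\Om|=\int_{\pa\Om}\nabla_Fu\cdot\nu_\Om\,d\H^n$; on $\pa\Om$, the zero-homogeneity of $\nabla F$ gives $\nabla F(\nabla u)=\nabla F(\nu_\Om)$ while Euler's identity yields $\nabla F(\nu_\Om)\cdot\nu_\Om=F(\nu_\Om)$, so $|\Om|=\int_{\pa\Om}F(\nabla u)F(\nu_\Om)\,d\H^n$. Applying Cauchy--Schwarz with the factorization $F(\nabla u)F(\nu_\Om)=[F(\nu_\Om)/H^F_\Om]^{1/2}[H^F_\Om F(\nu_\Om)F(\nabla u)^2]^{1/2}$, then using Proposition~\ref{proposition reilly} with $\Delta_Fu=1$ to replace $\int_{\pa\Om}H^F_\Om F(\nabla u)^2F(\nu_\Om)$ by $|\Om|-\int_\Om\trace((\nabla\nabla_Fu)^2)$, and rearranging yields \eqref{hk anisotropic plus}. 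For (a), the second right-hand term is the Cauchy--Schwarz gap and is non-negative. The first is non-negative by $\trace(A^2)\ge(\trace A)^2/(n+1)$ applied to $A:=\nabla(\nabla_Fu)=GH$, where $G:=\nabla^2(F^2/2)(\nabla u)$, $H:=\nabla^2u$: this $A$ is similar to the symmetric $\tilde A:=G^{1/2}HG^{1/2}$, hence diagonalizable over $\R$ with real eigenvalues summing to $\Delta_Fu=1$. (On $\C$, $A=0$ a.e.\ by the argument in Proposition~\ref{proposition some properties of u}.) For (b), equality forces every eigenvalue of $A$ to equal $1/(n+1)$, so $A=\Id/(n+1)$ on $\Om\setminus\C$; integrating gives $\nabla_Fu=(x-x_0)/(n+1)$ after translation, and inverting through \eqref{inverse} together with the $1$-homogeneity of $\nabla(F_*^2/2)$ produces $u(x)=F_*(x-x_0)^2/(2(n+1))+c$; imposing $u=0$ on $\pa\Om$ then forces $\pa\Om$ to be a level set of $F_*(\cdot-x_0)$, so $\Om=x_0+r\,K_F$ when $\Om$ is connected.

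\textbf{Estimates \eqref{estimate 1} and \eqref{estimate 1b}.} Dropping the non-negative Cauchy--Schwarz gap in \eqref{hk anisotropic plus} and dividing by $\int_{\pa\Om}F(\nu_\Om)/H^F_\Om$, the identity $\int(nF(\nu_\Om)/H^F_\Om)-(n+1)|\Om|=\eta_F(\Om)\int(nF(\nu_\Om)/H^F_\Om)$ yields
\[
\int_\Om\|A-\Id/(n+1)\|^2\le\frac{n|\Om|}{n+1}\,\eta_F(\Om).
\]
To convert the eigenvalue norm to the Frobenius norm, write $A-\Id/(n+1)=G^{1/2}(\tilde A-\Id/(n+1))G^{-1/2}$ (the identity is preserved by this conjugation since $G^{1/2}G^{-1/2}=\Id$), so the submultiplicativity $|PXP^{-1}|\le\|P\|_{\mathrm{op}}\|P^{-1}\|_{\mathrm{op}}|X|$ together with $\|G^{\pm1/2}\|_{\mathrm{op}}^2\le\Lambda_*^{\pm1}\lambda_*^{\mp1}$ and the symmetry of $\tilde A-\Id/(n+1)$ (whose Frobenius norm equals $\|A-\Id/(n+1)\|$) give $|A-\Id/(n+1)|^2\le(\Lambda_*/\lambda_*)\|A-\Id/(n+1)\|^2$; integration yields \eqref{estimate 1}. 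For \eqref{estimate 1b}, identify $\nabla^2(F_*^2/2)(\nabla F(\nabla u))=G^{-1}$ via \eqref{eqn: hessian inverses} and the $0$-homogeneity of $\nabla^2(F_*^2/2)$; then $\nabla^2u-G^{-1}/(n+1)=G^{-1/2}(\tilde A-\Id/(n+1))G^{-1/2}$, whose squared Frobenius norm is at most $\lambda_*^{-2}\|A-\Id/(n+1)\|^2$, producing the sharper $\lambda_*^{-2}$ dependence in \eqref{estimate 1b}.

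\textbf{Estimate \eqref{estimate 2}.} This is the main obstacle. Observe that $(n/H^{F,0}_\Om)/(n+1)=|\Om|/\F(\Om)=:\bar v$ is precisely the mean of $v:=F(\nabla u)$ with respect to $d\mu:=F(\nu_\Om)d\H^n$ (of total mass $\F(\Om)$), so \eqref{estimate 2} is a weighted variance bound on $v$. Setting $\alpha:=H^F_\Om/H^{F,0}_\Om\ge\k$, the Cauchy--Schwarz gap in \eqref{hk anisotropic plus} reads $\int d\mu/\alpha\int\alpha v^2d\mu-|\Om|^2$, and $H^F_\Om\ge\k H^{F,0}_\Om$ gives $\int F(\nu_\Om)/H^F_\Om\le\F(\Om)/(\k H^{F,0}_\Om)$; dropping the trace contribution from \eqref{hk anisotropic plus} then yields $\text{CS gap}\le C(n)|\Om|^2\eta_F(\Om)/\k$. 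To relate this to the target $\F(\Om)\int(v-\bar v)^2d\mu=\F(\Om)\int v^2d\mu-|\Om|^2$, I decompose $v=\bar v+\tilde v$ with $\int\tilde vd\mu=0$ and expand the error $\F(\Om)\int v^2d\mu-\int d\mu/\alpha\int\alpha v^2d\mu$ into three pieces: (i) a constant-in-$v$ part equal to $\bar v^2$ times the $\alpha$-Cauchy--Schwarz gap $\F(\Om)^2-\int d\mu/\alpha\int\alpha d\mu$, which after symmetrization becomes $\frac12\int\int(\alpha(x)-\alpha(y))^2/(\alpha(x)\alpha(y))d\mu d\mu\le C\F(\Om)^2\de_F(\Om)^2/\k^2$, contributing $C|\Om|^2\de_F^2/\k^2$; (ii) a mixed piece which, using $\int\tilde vd\mu=0$, reduces to $-2\bar v\int d\mu/\alpha\int(\alpha-1)\tilde vd\mu$ and is bounded via Cauchy--Schwarz and Young's inequality with parameter $c\F(\Om)$ by $C|\Om|^2\de_F^2/(c\k^2)+c\F(\Om)\|\tilde v\|_{L^2(d\mu)}^2$; (iii) a remainder quadratic in $\tilde v$, controlled analogously with the uniform bound $\|\tilde v\|_\infty\le C(F,n)\bar v/\k$ coming from Proposition~\ref{proposition lipschitz estimate} together with $H^F_\Om\ge\k H^{F,0}_\Om$. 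After absorbing the $c\F(\Om)\|\tilde v\|^2$ contributions into the left-hand side and using the Wulff inequality $|\Om|^2/\F(\Om)\le C(n)|\Om|^{(n+2)/(n+1)}/|K_F|^{1/(n+1)}$, one obtains \eqref{estimate 2}. The delicate point, and the main technical hurdle, is that a naive bound on the error is only \emph{linear} in $\de_F$; recovering the quadratic dependence requires exploiting the orthogonality $\int\tilde vd\mu=0$ in step (ii) to force a second factor of $\de_F$ out of the $\alpha$-deviation.
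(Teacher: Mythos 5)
Your main identity, parts (a)--(b), and estimates \eqref{estimate 1}, \eqref{estimate 1b} follow essentially the paper's route: Ros' argument via the torsion potential, the anisotropic Reilly identity, Cauchy--Schwarz, and the symmetrization $A^{1/2}BA^{1/2}$ to diagonalize $AB$ and pass from the eigenvalue norm to the Frobenius norm via $\|A^{\pm 1/2}\|_{\mathrm{op}}$ controlled by $\l_*$, $\Lambda_*$. This all checks out.

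For \eqref{estimate 2}, however, your route diverges genuinely from the paper's, and as presented it has a real gap. The paper does not compare the full Cauchy--Schwarz gap $\int d\mu/\a\,\int\a v^2 d\mu-|\Om|^2$ directly to the unweighted variance $\F(\Om)\int\tilde v^2 d\mu$. Instead it keeps the $H^F_\Om$-weighted structure throughout: from the elementary Hilbert-space inequality $|v|^2|w|^2-(v\cdot w)^2\ge\frac{|v|^2}{2}\big|\tfrac{|w|}{|v|}v-w\big|^2$ with $v=(H^F_\Om)^{-1/2}$, $w=(H^F_\Om)^{1/2}F(\nabla u)$ one gets
\[
C(n)\,|\Om|\,\eta_F(\Om)\ \ge\ \int_{\pa\Om}\Big|\tfrac{|w|}{|v|}\tfrac{1}{H^F_\Om}-F(\nabla u)\Big|^2 H^F_\Om F(\nu_\Om)\,,
\]
then replaces the pointwise factor $1/H^F_\Om$ inside the ``constant'' by $1/H^{F,0}_\Om$; the resulting error, $\frac{|w|^2}{|v|^2}\int|1/H^{F,0}_\Om-1/H^F_\Om|^2H^F_\Om F(\nu_\Om)$, is already quadratic in the deficit and is bounded by $|\Om|\de_F^2/\k$ using $|w|^2\le|\Om|$ (which comes from Reilly) and $1/H^F_\Om\le 1/(\k H^{F,0}_\Om)$. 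Only at the very end is the weight $H^F_\Om\ge\k H^{F,0}_\Om$ used once more to drop to the unweighted $L^2(F(\nu_\Om)\H^n)$ and to the optimal constant $c=|\Om|/\F(\Om)$. This produces exactly $\k^{-2}$ and a purely dimensional constant $C(n)$, with no absorption of quadratic-in-$\tilde v$ errors at all.

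Your decomposition, by contrast, must absorb terms such as $\b\int(1-\a)\tilde v^2\,d\mu$ (with $\b=\int d\mu/\a\le\F(\Om)/\k$) via Young's inequality with an $L^\infty$ bound on $\tilde v$. Working this out gives a contribution of order $\|\tilde v\|_\infty^2\,\F(\Om)^2\de_F^2/\k^2\sim|\Om|^2\de_F^2/\k^4$, which is two powers of $\k$ worse than \eqref{estimate 2}. Moreover, the $L^\infty$ bound you invoke, $\|\tilde v\|_\infty\le C(F,n)\bar v/\k$, carries $F$-dependent constants (the ratio $M_F/m_F$) coming from comparing $F(\nabla u)$ to $|\nabla u|$; the sharp version $F(\nabla u)\le 1/H^F_{\Om,\inf}$ \emph{on $\pa\Om$} does hold (it follows from $\pa_{\nu_\Om}\g\ge H^F_{\Om,\inf}\pa_{\nu_\Om}u$ on $\pa\Om$ together with \eqref{nablag}), but this needs to be stated and used instead. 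Even after fixing the $F$-dependence, the $\k^{-4}$ discrepancy remains. So as written, your argument does not prove \eqref{estimate 2} with the stated constant; you would need to drop the variance decomposition and keep the weighted form, as the paper does.

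Two minor remarks: in (b) you should also justify why $A=\Id/(n+1)$ a.e.\ on $\Om$ (not just on $\Om\setminus\C$) before integrating; the paper's step three does this implicitly via $|\C|=0$ from Proposition~\ref{proposition some properties of u}, so you should cite it. And for \eqref{estimate 1b}, note $\nabla^2(F_*^2/2)(\nabla F(\nabla u))=G^{-1}$ requires \eqref{eqn: hessian inverses} evaluated at $\nu=\nabla u/|\nabla u|$ together with the zero-homogeneity of both Hessians; you gesture at this but it is worth making explicit.
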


\begin{proof} {\it Step one}: We prove \eqref{hk anisotropic plus}, following Ros' argument in \cite{Ros}. By the divergence theorem, since $\nu_\Om=\nabla u/|\nabla u|$ on $\pa\Om$, we have
  \begin{eqnarray*}
  |\Om|^2&=&\Big(\int_\Om\Delta_Fu\Big)^2=\Big(\int_{\pa\Om} F(\nabla u)\nabla F(\nabla u)\cdot\nu_\Om\Big)^2
  =\Big(\int_{\pa\Om} \frac{F(\nabla u)^2}{|\nabla u|}\Big)^2=\Big(\int_{\pa\Om} F(\nabla u)\,F(\nu_\Om)\Big)^2
  \\
  &\le&\int_{\pa\Om}H^F_\Om\,F(\nabla u)^2\,F(\nu_\Om)\,\int_{\pa\Om}\frac{F(\nu_\Om)}{H^F_\Om}\,.
  \end{eqnarray*}
  By the anisotropic Reilly's identity,
  \begin{eqnarray*}
  \int_{\pa \Om} H_\Om^F \, F(\nabla u)^2 F(\nu_\Om)\, d\H^n&=&\int_\Omega (\Delta_F u)^2 -\trace\left((\nabla(\nabla_Fu))^2\right)
  \\
  &=&\frac{n}{n+1}\int_\Omega (\Delta_F u)^2 +\int_\Omega \frac{(\Delta_F u)^2}{n+1} -\trace\left((\nabla(\nabla_Fu))^2\right)
  \\
  &\le&\frac{n}{n+1}\int_\Omega (\Delta_F u)^2 =\frac{n|\Om|}{n+1}\,,
  \end{eqnarray*}
where the last inequality follows from the following linear algebra consideration. Set $A=\nabla^2(F^2/2)(\nabla u)$ and $B=\nabla^2 u$, so $\nabla(\nabla_Fu)=AB$ and therefore
  \[
  \trace((\nabla(\nabla_Fu))^2)=\trace((AB)^2)\,.
  \]
  Since $A$ is positive definite and symmetric on $\Om\setminus\C$, $A^{1/2}$ and $A^{-1/2}$ exist and
  \[
  A^{1/2}BA^{1/2}=A^{-1/2}(AB)A^{1/2}
  \]
  is symmetric. Hence, there exist an orthogonal matrix $O$ and a diagonal matrix $L$ such that
  \begin{equation}
    \label{smart}
      A^{1/2}BA^{1/2}=O^{-1}LO\,.
  \end{equation}
  So,
  \begin{equation}\label{eqn: matrices}
  AB=A^{1/2}O^{-1}LOA^{-1/2}\,,\qquad (AB)^2=A^{1/2}O^{-1}L^2OA^{-1/2}
  \end{equation}
 and
 \begin{equation}\label{eqn: matrices2}
 B - A^{-1}= A^{-1/2}O^{-1}(L-\Id)OA^{-1/2}\,.
 \end{equation}
 Because the trace operator is invariant under conjugation,
  \[
  \trace(AB)=\trace(L)\,,\qquad\trace((AB)^2)=\trace(L^2)\,.
  \]
  By H\"older's inequality, $(n+1)\,\trace(L^2)\ge\trace(L)^2$ so that
  \[
  \trace((\nabla(\nabla_F u))^2)-\frac{(\Delta_Fu)^2}{n+1}=\trace(L^2)-\frac{(\trace L)^2}{n+1}\ge0\,,
  \]
We have now shown the anisotropic Heintze-Karcher inequality and the identity \eqref{hk anisotropic plus} for the anisotropic torsion potential $u$.

  \medskip

  \noindent {\it Step two}: We prove \eqref{estimate 1}, \eqref{estimate 1b}, and \eqref{estimate 2}. Let us first notice that if $v$ and $w$ belong to a Hilbert space with norm $|\cdot|$ and $v\cdot w\ge0$ then
  \begin{eqnarray}\nonumber
    |v|^2|w|^2-(v\cdot w)^2&\ge&|v|\,|w|\,\Big(|v||w|-(v\cdot w)\Big)=\frac{|v|\,|w|}2\,\bigg|\sqrt{\frac{|w|}{|v|}}v-
    \sqrt{\frac{|v|}{|w|}}w\bigg|^2
    \\\label{hilbert}
    &=&\frac{|v|^2}2\,\bigg|\frac{|w|}{|v|}v-w\bigg|^2\,.
  \end{eqnarray}
  Given $(\l_i)_{i=1}^N$ such that $\sum_i\l_i=1$, we can apply \eqref{hilbert} to $w=\sum_i\l_i\,e_i$ and $v=\sum_ie_i$ to obtain
  \begin{eqnarray*}
  N\,\sum_i\l_i^2-(\sum_i\l_i)^2&\ge&\frac{N}2\,\Big|\frac{(\sum_i\l_i^2)^{1/2}}{N}\,\sum_ie_i-\sum_i\l_ie_i\Big|^2
  \\
  &\ge&\frac{N}2\,\inf_{c\in\R}\sum_i(c-\l_i)^2=\frac{N}2\sum_i\Big(\frac{\sum_j\l_j}N-\l_i\Big)^2
  =\frac{N}2\sum_i\Big(\frac1N-\l_i\Big)^2\,.
  \end{eqnarray*}
  Thanks to \eqref{elliptic integrand GF},
  \[
  |A^{1/2}|\le \sqrt{(n+1)\,\Lambda_*}\qquad |A^{-1/2}|\le\sqrt{\frac{n+1}{\l_*}}\,.
  \]
  By \eqref{eqn: matrices}, if we set $N=n+1$ and denote by $\l_i$ the eigenvalues of $L$, then
  \begin{align*}
  \Big|\nabla(\nabla_F u)-\frac{\Id}{n+1}\Big|^2 &\le C(n)\frac{\Lambda_*}{\lambda_*}\,\Big|L-\frac{\Id}{n+1}\Big|^2\\
  &
  \le C(n)\frac{\Lambda_*}{\lambda_*}
  \sum_i\Big(\frac1{n+1}-\l_i\Big)^2\le C(n)\frac{\Lambda_*}{\lambda_*}\, \Big(\trace(L^2)-\frac{\trace(L)^2}{n+1}\Big)\,.
  \end{align*}
  Hence, by \eqref{hk anisotropic plus},
  \[
  \frac{n\,|\Om|}{n+1}\,\eta_F(\Om)\ge \int_\Om\trace((\nabla(\nabla_Fu))^2)-\frac{(\Delta_Fu)^2}{n+1}
  \ge\frac1{C(n)}\frac{\l_*}{\Lambda_*}\int_\Om\Big|\nabla(\nabla_F u)-\frac{\Id}{n+1}\Big|^2\,.
  \]
 This proves \eqref{estimate 1}. In the same way, using \eqref{eqn: matrices2} and recalling \eqref{eqn: hessian inverses} to express $A^{-1}$, we have
  \begin{align*}
  \Big| \na^2 u - \frac{\na^2 (F^2_*/2)(\na F(\na u))}{n+1}\Big|^2	
  &\le \frac{C(n)}{\lambda_*^2}\,\Big|L-\frac{\Id}{n+1}\Big|^2\\
&  \le \frac{C(n)}{\lambda_*^2}
  \sum_i\Big(\frac1{n+1}-\l_i\Big)^2
  \le\frac{C(n)}{\lambda_*^2}\Big(\trace(L^2)-\frac{\trace(L)^2}{n+1}\Big)\,.
  \end{align*}
  So, again by \eqref{hk anisotropic plus},
   \[
  \frac{n\,|\Om|}{n+1}\,\eta_F(\Om)\ge \int_\Om\trace((\nabla(\nabla_Fu))^2)-\frac{(\Delta_Fu)^2}{n+1}
  \ge\frac{\l_*^2}{C(n)}\int_\Om\Big|\na^2 u -\frac{\na^2(F_*^2/2)(\na F(\na u))}{n+1}\Big|^2\,,
  \]
proving \eqref{estimate 1b}.

Similarly, \eqref{hk anisotropic plus} implies
  \[
  \frac{|\Om|}{n+1}\,\eta_F(\Om)\,\int_{\pa\Om}\frac{n\,F(\nu_\Om)}{H^F_\Om}\ge |v|^2|w|^2-(v\cdot w)^2
  \]
  where $v=(H^F_\Om)^{-1/2}$ and $w=(H^F_\Om)^{1/2}F(\nabla u)$ are seen as vectors of $L^2(F(\nu_\Om)\H^n\llcorner\pa\Om)$. Writing down \eqref{hilbert} we find
  \[
  \frac{|\Om|}{n+1}\,\eta_F(\Om)\,\int_{\pa\Om}\frac{n\,F(\nu_\Om)}{H^F_\Om}
  \ge\frac12\int_{\pa\Om}\frac{F(\nu_\Om)}{H^F_\Om}\int_{\pa\Om}\Big|\frac{|w|}{|v|}\frac{1}{(H_\Om^F)^{1/2}}-(H_\Om^F)^{1/2}F(\nabla u)\Big|^2\,F(\nu_\Om)\,.
  \]
  After simplifying we have,
  \[
  C(n)\,|\Om|\,\eta_F(\Om)\ge\int_{\pa\Om}\Big|\frac{|w|}{|v|}\frac{1}{H_\Om^F}-F(\nabla u)\Big|^2\,H^F_\Om\,F(\nu_\Om)
  \]
  and hence
  \[
  C(n)\,
  \Big\{|\Om|\,\eta_F(\Om)
  +\frac{|w|^2}{|v|^2}\int_{\pa\Om}\Big|\frac1{H^{F,0}_\Om}-\frac1{H^F_\Om}\Big|^2\,H^F_\Om\,F(\nu_\Om)
  \Big\}
  \ge\int_{\pa\Om}\Big|\frac{|w|}{|v|}\frac{1}{H_\Om^{F,0}}-F(\nabla u)\Big|^2\,H^F_\Om\,F(\nu_\Om)\,.
  \]
  Thanks to \eqref{reilly id anisop} we have $|w|^2\le|\Om|$, so that by $1/H^F_\Om\le1/\k\,H^{F,0}_\Om$,
  \begin{eqnarray*}
  \frac{|w|^2}{|v|^2}\int_{\pa\Om}\Big|\frac1{H^{F,0}_\Om}-\frac1{H^F_\Om}\Big|^2\,H^F_\Om\,F(\nu_\Om)
  &\le&
  \frac{|\Om|}{\int_{\pa\Om}F(\nu_\Om)/H^F_\Om}\int_{\pa\Om}\Big|\frac{H^F_\Om}{H^{F,0}_\Om}-1\Big|^2\,\frac{F(\nu_\Om)}{H^F_\Om}
  \\
  &\le&\frac{|\Om|}{\k\,H^{F,0}_\Om\,\int_{\pa\Om}F(\nu_\Om)/H_\Om^F}\int_{\pa\Om}\Big|\frac{H^F_\Om}{H^{F,0}_\Om}-1\Big|^2\,F(\nu_\Om)
  \\
  &=&
  \frac{|\Om|}\k\,\frac{(n+1)\,|\Om|}{\int_{\pa\Om}n\,F(\nu_\Om)/H_\Om^F}\,\de_F(\Om)^2\le  \frac{|\Om|}\k\,\de_F(\Om)^2
  \end{eqnarray*}
  where in the last step we have used the anisotropic Heintze-Karcher inequality. Thus,
  \begin{equation}
    \label{newark1}
      C(n)\,|\Om|\,\Big\{\eta_F(\Om)+\frac{\de_F(\Om)^2}\k\Big\}
  \ge\int_{\pa\Om}\Big|\frac{|w|}{|v|}\frac{1}{H_\Om^{F,0}}-F(\nabla u)\Big|^2\,H^F_\Om\,F(\nu_\Om)\,.
  \end{equation}
  Exploiting again $H_\Om^{F}\ge \kappa H_\Om^{F,0}$
  \begin{eqnarray}\label{newark2}
        \int_{\pa\Om}\Big|\frac{|w|}{|v|}\frac{1}{H_\Om^{F,0}}-F(\nabla u)\Big|^2\,H^F_\Om\,F(\nu_\Om)
    \ge
    \kappa H^{F,0}_\Om\,\inf_{c\in\R}\int_{\pa\Om}\Big|c-F(\nabla u)\Big|^2\,F(\nu_\Om)\,.
  \end{eqnarray}
  By the divergence theorem, the optimal $c$ is given by the average
  \[
  c=\frac{\int_{\pa\Om}F(\nabla u)F(\nu_\Om)}{\int_{\pa\Om}F(\nu_\Om)}
  =
  \frac{\int_{\Om}\Delta_Fu}{\F(\Om)}=\frac{|\Om|}{\F(\Om)}=\frac{(n/H^{F,0}_\Om)}{n+1}\,.
  \]
  Thus by combining \eqref{newark1}, \eqref{newark2}, and the last identity, we have
  \begin{equation}
    \label{newark5}
    C(n)\,\frac{|\Om|}{\k\,H^{F,0}_\Om}\,\Big\{\eta_F(\Om)+\frac{\de_F(\Om)^2}\k\Big\}
        \ge \int_{\pa\Om}\Big|\frac{n/H^{F,0}_\Om}{n+1}-F(\nabla u)\Big|^2\,F(\nu_\Om)\,,
  \end{equation}
  where by the Wulff inequality \eqref{wulff inequality},
    \[
  \frac{|\Om|}{H^{F,0}_\Om}\le C(n)\,\frac{|\Om|^2}{\F(\Om)}\le C(n)\,\frac{|\Om|^{(n+2)/(n+1)}}{|K_F|^{1/(n+1)}}\,.
  \]
%
  We thus conclude the proof of \eqref{estimate 2}.

  \medskip

  \noindent {\it Step three}: We finally notice that if the right-hand side of \eqref{hk anisotropic plus} is equal to zero and $\Om$ is connected, then $\Om=x+r\,K_F$ for some $x\in\R^{n+1}$ and $r>0$. Indeed, in this case, \eqref{estimate 1} gives
  \[
  \nabla(\nabla_Fu)=\frac{\Id}{n+1}\qquad\mbox{on $\Om$}
  \]
  so that, for some $x_0\in\R^{n+1}$,
  \[
  \nabla_Fu(x)=\frac{x-x_0}{n+1}\qquad\forall x\in\Om\,.
  \]
  Since $\nabla_Fu(x)=\nabla(F^2/2)(\nabla u)$, by \eqref{inverse}
  \[
  \nabla u(x)=\nabla(F_*^2/2)\Big(\frac{x-x_0}{n+1}\Big)=\frac{\nabla (F_*)^2(x-x_0)}{2(n+1)}\qquad\forall x\in\Om\,.
  \]
  In particular, for some $c\in\R$ we have
  \[
  u(x)=c+\frac{F_*(x-x_0)^2}{2(n+1)}\qquad\forall x\in\Om\,,
  \]
  so that $u=0$ on $\pa\Om$ implies that $\pa\Om$ is a level set of $F_*(\cdot-x_0)$, as claimed.
  \end{proof}

\subsection{Proof of Theorem \ref{thm main}}\label{section proof final} {\it Step one}: We first recall our setting.  We consider a convex integrand $F$ that is the pointwise limit
  of a sequence $\{F_h\}_{h\in\N}$ of smooth elliptic integrands with
  \begin{equation}\label{hp main theorem xx}
  \begin{split}
    m\le F_h\le M&\qquad\mbox{on $\S^n$}\,,
    \\
    \l_h\,\Id\le \nabla^2 F_h(\nu)\le \Lambda_h\,\Id&\qquad\mbox{on $\nu^\perp$, $\forall\nu\in\S^n$}\,,
  \end{split}
  \end{equation}
  where $m$, $M$, $\l_h$, and $\Lambda_h$ are positive constants ($m$ and $M$ independent of $h$).
  Notice that, by convexity,
  \begin{equation}
    \label{convergence hp}
      \mbox{$F_h\to F$ uniformly on compact subsets of $\R^{n+1}$}\,.
  \end{equation}
  We also consider a sequence  $\{\Om_h\}_{h\in\N}$ of bounded open sets with smooth boundary with
  \begin{equation}
    \label{mc 123}
    H_{\Om_h}^{F_h, 0}=n\,,\qquad H^{F_h}_{\Om_h}\ge\k\,n\quad\mbox{on $\pa\Om_h$}
  \end{equation}
  and
  \begin{equation}
    \label{bounds x}
      \sup_{h\in\N}\diam(\Om_h)<\infty\,,\qquad \F_h(\Om_h)\le (L+\s)\,\F_h(K_{F_h})\,
  \end{equation}
  for some $\k>0$, $L\in\N$ and $\s\in(0,1)$. Moreover, we assume that
  \begin{equation}
    \label{small def main theorem x}
      \lim_{h\to\infty}\max\Big\{\frac1{\l_h^2},\frac{\Lambda_h}{\lambda_h}\Big\}\,\eta_{F_h}(\Om_h)+\de_{F_h}(\Om_h)=0\,.
  \end{equation}
  We want to prove the existence of an open set $\Om$, which is the disjoint union of at most $L$-many translations of $K_F$, such that, up to translations and up to extracting subsequences,
  \begin{equation}
    \label{convergence main theorem x}
     \lim_{h\to\infty}\big|\F_h(\Om_h)-\F(\Om)\big|+|\Om_h\Delta\Om|=0\,.
  \end{equation}
  {color{blue}We now turn to the proof of \eqref{convergence main theorem x}.

  \medskip

  \noindent {\it Step two}: Next}  {color{ForestGreen}

  \medskip

  \noindent {\it Step two}: We now turn to the proof of \eqref{convergence main theorem x}. Next} we obtain some immediate compactness properties for the sets $\Om_h$. Since \eqref{hp main theorem xx} implies $\F_h(E) \ge m P(E)$ for any set of finite perimeter $E$, \eqref{bounds x} implies
  \begin{equation}
    \label{bounded perimeter and volume}
    \sup_{h\in\N}|\Om_h|+P(\Om_h)<\infty\,.
  \end{equation}
  By \eqref{bounds x}, up to translations, we may assume $\Om_h\subset B_R$ for some $R>0$, and since $\sup_{h\in\N}P(\Om_h)<\infty$, the compactness theorem for sets of finite perimeter implies that, up to subsequences, there exists a bounded set of finite perimeter $\Om$ such that $|\Om_h\Delta\Om|\to 0$ as $h\to\infty$.

  \medskip

  \noindent {\it Step three}: We show that, if $u_h:\R^{n+1}\to(-\infty,0]$ denotes the $F_h$-torsion potential of $\Om_h$ extended by zero outside of $\Om_h$, then, up to extracting further subsequences,
  \[
  \mbox{$u_h\to u$ uniformly on $\R^{n+1}$}
  \]
  where $|\{u<0\}\setminus\Om|=0$ and
  \begin{equation}
    \label{formula x}
  u(x)=\sum_{j\in J}\min\Big\{\frac{F_*(x-x_j)^2-s_j^2}{2(n+1)},0\Big\}\qquad x\in\R^{n+1}\,,
  \end{equation}
  where $J$ is an at most countable set, $x_j\in\R^{n+1}$ and $s_j>0$. Indeed, by \eqref{estimate 1}, \eqref{estimate 2} and \eqref{small def main theorem x} we have that
  \begin{equation}
      \label{estimate 1 x}
      \lim_{h\to\infty}\int_{\Om_h}\Big|\nabla(\nabla_{F_h}u_h)-\frac{\Id}{n+1}\Big|^2=0\,,
  \end{equation}
  \begin{equation} \label{estimate 2 x}
  \lim_{h\to\infty}\int_{\pa\Om_h}\Big|\frac{1}{n+1}-F_h(\nabla u_h)\Big|^2\,F_h(\nu_{\Om_h})=0\,,
  \end{equation}
  \begin{equation}
      \label{estimate 1b recall}
      \lim_{h\to\infty}\int_{\Om_h}\Big|\na^2u_h-\frac{\na^2((F_h)_*/2)^2(\na {F_h}(\na u_h))}{n+1}\Big|^2=0\,.
  \end{equation}
  By the universal Lipschitz estimate of Proposition \ref{proposition lipschitz estimate} and by \eqref{mc 123},
  \begin{equation}\label{eqn: lip bd}
  \Lip(u_h)\le \frac1{m\,H^{F_h}_{\Om_h,\inf}}\le C(n,m,\k)\,.
  \end{equation}
  Since $\{u_h>0\}=\Om_h\subset B_R$, it follows that $u_h\to u$ uniformly on $\R^{n+1}$ for a non-positive Lipschitz function $u$. We notice that
  \begin{equation}
    \label{inclusion x}
    \big|\{u<0\}\setminus\Om\big|=0\,.
  \end{equation}
  Indeed, by uniform convergence, for every $\e>0$ we have $\{u<-\e\}\subset\Om_h$ if $h$ is large enough. This last fact, combined with \eqref{estimate 1 x}, implies that $\{\nabla(\nabla_{F_h}u_h)\}_{h\in\N}$ is bounded in $L^2(\{u<0\})$. In addition, $|\nabla_{F_h}u_h|\le C$ on $\R^{n+1}$ thanks to  \eqref{mm2}, \eqref{hp main theorem xx}, and \eqref{eqn: lip bd}. Since $\{u<-\e\}$ is bounded and $\e>0$ is arbitrary, we find that
  \begin{equation}
    \label{strong to v}
      \nabla_{F_h}u_h\to v\qquad\mbox{in $L^2(\{u<0\})$}
  \end{equation}
  for some $v\in H^1(\{u<0\};\R^{n+1})$. Now, $f(M)=|M-\Id|^2$ is a convex function on $\R^{n+1}\otimes\R^{n+1}$, so that
  \[
  \int_A\,f(\nabla w)\le\liminf_{h\to\infty}\int_A\,f(\nabla w_h)
  \]
  whenever $A\subset\R^{n+1}$ is open and $w_h\to w$ in $L^1(A;\R^{n+1})$ for some $w\in W^{1,1}(A;\R^{n+1})$. Applying this to $w_h=\nabla_{F_h}u_h$, and thanks to \eqref{estimate 1 x}, we find that
  \[
  \int_{\{u<-\e\}}\Big|\nabla v-\frac{\Id}{n+1}\Big|^2=0\qquad\forall \e>0\,,
  \]
  and thus
  \[
  \nabla v=\frac{\Id}{n+1}\qquad\mbox{on $\{u<0\}$}.
  \]
  Therefore, if $\{A_j\}_{j\in J}$ are the connected components of the open set $\{u<0\}$ (here $J$ is an at most countable set), then there exists $x_j\in\R^{n+1}$ such that
  \begin{equation}
    \label{terms of}
      v(x)=\frac{x-x_j}{n+1}\qquad\forall x\in A_j\,.
  \end{equation}
  We now need to translate \eqref{terms of} in terms of $u$. To this end, we claim that
  \begin{equation}
    \label{eq:vinsubdifferential}
    v(x)\in\partial (F^2/2)(\nabla u(x))\qquad\mbox{for a.e. $x\in\R^{n+1}$}\,.
  \end{equation}
  Indeed, by the convexity of $F_h^2$ and since $2\,\nabla_{F_{h}}u_h(x)=\nabla (F_h^2)(\nabla u_h(x))$, we have that
  \begin{equation}\label{eq:subdifferentialatFh}
  F_h(z)^2\ge F_h(\nabla u_h(x))^2+2\,\nabla_{F_{h}}u_h(x)\cdot\big(z-\nabla u_h(x)\big)\,\qquad\forall z\in\R^{n+1}\,.
  \end{equation}
  By \eqref{convergence hp}, \eqref{strong to v} and the uniform Lipschitz bound on the potentials $u_h$, \eqref{eq:subdifferentialatFh} implies that, for almost every $x\in\R^{n+1}$ and for every $z\in\R^{n+1}$,
  \begin{equation}\label{eq:subdifferentialatF}
  F(z)^2\ge F(\nabla u_h(x))^2+2\,v(x)\cdot\big(z-\nabla u_h(x)\big)+o(1)\qquad\mbox{as $h\to\infty$}
  \end{equation}
  where $o(1)\to 0$ as $h\to\infty$. Handling $\nabla u_h$ is more delicate, because we only have $\nabla u_h\rightharpoonup\nabla u$ in $L^2(\R^{n+1})$. However, by Mazur's lemma, for every $h\in\N$ there exist $N_h\ge h$ and $\{\alpha^h_l\}_{l=h}^{N_h}\subset[0,1]$ with $\sum_{l=h}^{N_h}\alpha^h_{l}=1$ such that
  \begin{equation}
    \label{mazur}
      \sum_{l=h}^{N_h}\alpha^h_{l}\nabla u_{l}\to\nabla u\qquad\mbox{in $L^2(\R^{n+1})$ and a.e. on $\R^{n+1}$}\,.
  \end{equation}
  In particular,  thanks to \eqref{mazur}, this implies that by convexity of $F^2$ and by \eqref{eq:subdifferentialatF}, for almost every $x\in\R^{n+1}$ and for every $z\in\R^{n+1}$ we have
  \begin{align*}
    F(z)^2=\sum_{l=h}^{N_h}\alpha^h_{l} F(z)^2&\ge \sum_{l=h}^{N_h}\alpha^h_{l}F(\nabla u_h(x))^2+2\sum_{l=h}^{N_h}\alpha^h_{l}\,v(x)\cdot(z-\nabla u_h(x))+{\rm O}(1)\\
    &\ge  F\Big(\sum_{l=h}^{N_h}\alpha^h_{l}\nabla u_h(x)\Big)^2+2\, v(x)\cdot\Big(z-\sum_{l=h}^{N_h}\alpha^h_{l}\nabla u_h(x)\Big)+{\rm O}(1)
    \\
    &= F(\nabla u(x))^2+2\,v(x)\cdot(z-\nabla u(x))\,.
  \end{align*}
  This proves \eqref{eq:vinsubdifferential}. Inverting \eqref{eq:vinsubdifferential} using \eqref{eq:invertingsubdifferentials}, we have that
  $$
  \nabla u(x)\in\partial (F_*^2/2)(v(x)) \qquad\mbox{for a.e. $x\in\R^{n+1}$}\,.
  $$
  By \eqref{terms of}, and since $F_*$ is differentiable almost everywhere on $\R^{n+1}$, this implies that
  $$
  \nabla u(x)=\nabla(F_*^2/2)(x-x_j)\qquad\mbox{for a.e. $x\in A_j$}\,.
  $$
  We thus conclude that there exist $c_j\in \R$ such that
  $$
  u(x)=\frac{F_*(x-x_j)^2}{2(n+1)}+c_j\qquad\mbox{for a.e. $x\in A_j$.}
  $$
  Since $u<0$ in $A_j$ and $u=0$ on $\pa A_j$, we deduce that $c_j>0$ and $A_j=x_j+s_j\,K_F$. Hence \eqref{formula x} follows.

  \bigskip

  \noindent {\it Step four}: The next few steps of the argument are devoted to showing that $s_j=1$ for every $j\in J$. To begin, we prove that
  \begin{equation}\label{eqn: conv of sq}
 \lim_{h\to \infty} \int_{\Om_h}F_h(\nabla u_h)^2=\int_{\{u<0\}}F(\na u)^2 \,.
  \end{equation}
  First note that by the convergence of $F_h$ to $F$ \eqref{convergence hp} and the Lipschitz bound on $u_h$ \eqref{eqn: lip bd}, we have
  \begin{equation}\label{eqn: killed an h}
  \int_{\Om_h}F_h(\nabla u_h)^2=\int_{\Om_h}F(\na u_h)^2+o(1)\qquad\mbox{as $h\to\infty$}
  \end{equation}
  where $o(1)$ denotes a vanishing sequence. Since $\{u<-\e\}\subset\Om_h$ for $h$ large enough, by $\na u_h \rightharpoonup \na u_h$ in $L^2$ and the convexity of $F$, for every $\e>0$ we have
  \[
  \liminf\limits_{h\rightarrow \infty}\int_{\Om_h}F(\na u_h)^2\ge \liminf\limits_{h\rightarrow \infty}\int_{\{u<-\e\}}F(\na u_h)^2\ge\int_{\{u<-\e\}}  F(\na u)^2\,.
  \]
  Letting $\e\rightarrow 0$ we see that, by \eqref{eqn: killed an h},
 \begin{equation}\label{eqn:upper energy}
  \liminf\limits_{h\rightarrow \infty}\int_{\Om_h}F_h(\na u_h)^2\ge\int_{\{u<0\}}F(\na u)^2\,.
  \end{equation}
  To prove the opposite inequality, we recall that $u_h$ is the unique minimizer of $\int_\Om F_h(\na v)^2$ among $v\in W^{1,2}_0(\Om_h)$. Setting
  \[
  \e_h=-\inf_{\R^{n+1}\setminus\Om_h}u\,,\qquad v_h=(u+\e_h)_{-}\in W^{1,2}_0(\Om_h)\,,
  \]
  we find that
  \begin{equation}\label{eqn:energy limsup}
  \int_{\Om_h}F(\na u_h)^2\le\int_{\Om_h}F_h(\na v_h)^2=\int_{\Om_h}F(\na v_h)^2+o(1)=\int_{\{u<-\e_h\}}F(\na u)^2+o(1)
  \end{equation}
  where in the first identity we have used the convergence of $F_h$ to $F$ \eqref{convergence hp}. Combining \eqref{eqn:energy limsup} with \eqref{eqn:upper energy}, we prove \eqref{eqn: conv of sq}.

  \bigskip

  \noindent {\it Step five}: We show that
  \begin{equation}\label{eq:convergence of Fhalpha}
  \lim_{h \to \infty}\int_{\Om_h } F_h(\na u_h)^\alpha = \int_{\{u<0\} } F(\na u)^\alpha\qquad\forall\a>0\,.
  \end{equation}
  Notice that this would be obvious if we had the strong convergence of $\nabla u_h$ to $\nabla u$, and that in that case,  one could actually assert \eqref{eq:convergence of Fhalpha} with any non-negative locally bounded function $H$ replacing $F^\a$. However, we only have that $\nabla u_h\overset{*}{\rightharpoonup} \nabla u$ in $L^\infty(\R^{n+1})$. To obtain \eqref{eq:convergence of Fhalpha}, we will exploit the strict convexity of $t\in(-1,\infty)\mapsto F((1+t)\nu)^2$ through the theory of Young measures.

  Let us recall that, by the uniform Lipschitz bound \eqref{eqn: lip bd} and since $\spt\,u_h\subset B_R$ for $R$ independent of $h$, we can apply the fundamental theorem of Young measures \cite[Chapter 1, Theorem 11]{EvansWeakConv} to find a measurable family of probability measures $\{\nu_x\}_{x\in \R^{n+1}}$ such that
  \[
  \int_{\R^{n+1}}\vphi(x)\,\psi(\nabla u_h(x))\,dx\to\int_{\R^{n+1}}\vphi(x)\int_{\R^{n+1}}\psi(\xi)\,d\nu_x(\xi)\qquad\forall \vphi\in L^1(B_R)\,,\psi\in C^0_c(\R^{n+1})\,.
  \]
  In particular, we easily deduce that
  \begin{equation}
    \label{average}
      \nabla u(x)=\int_{\R^{n+1}}\xi\,d\nu_x(\xi)\qquad\mbox{for a.e. $x\in\R^{n+1}$}\,.
  \end{equation}
  By \eqref{eqn: conv of sq} and by convexity of $F^2$, if $p$ denotes a measurable selection of $\pa F^2(\nabla u)$ on $\R^{n+1}$ (that is, $p:\R^{n+1}\to\R^{n+1}$ is a measurable map with $p(x)\in\pa F^2(\nabla u(x))$ for almost every $x\in \R^{n+1}$), then
  \begin{eqnarray*}
  \int_{\R^{n+1}}F(\na u)^2&=&\lim_{h\to \infty} \int_{\R^{n+1}} F(\na u_h)^2 \, dx = \int_{B_R}dx \int_{\R^{n+1}} F(\xi)^2 \, d\nu_x(\xi)
  \\
  & \geq& \int_{B_R}dx \int_{\R^{n+1}} F(\na u(x))^2 + p(x) \cdot (\xi -\na u(x) ) \, d \nu_x(\xi)
  \\
  &=&\int_{\R^{n+1}}F(\nabla u)^2+\int_{B_R}\,dx\int_{\R^{n+1}}p(x) \cdot (\xi -\na u(x) ) \, d \nu_x(\xi)=\int_{\R^{n+1}}F(\nabla u)^2\,,
  \end{eqnarray*}
  where in the last identity we have used \eqref{average}. Hence, for every measurable selection $p$ of $\pa F^2(\na u)$,  almost every $x\in\R^{n+1}$, and $\nu_x$-almost every $\xi\in\R^{n+1}$, we have
  \[
  F(\xi)^2 = F(\na u(x))^2 + p(x)\cdot (\na u(x) -\xi)\qquad\mbox{for a.e. $x\in\R^{n+1}$ and }\,.
  \]
  As $t\mapsto F(\nu+t\,z)^2$ is strictly convex and increasing on $t>0$ whenever $z$ is an outer normal direction to $\{F<F(\nu)\}$ at $\nu$, this in turn implies that for almost every $x\in\R^{n+1}$
  \[
  \spt(\nu_x) \subset \{F= F(\na u(x))\} \,.
  \]
  Consequently, for any $g\in C^0(\R^{n+1})$ with $g(0)=0$, we have
  \[
  \lim_{h\to \infty} \int_{\R^{n+1}} g(\na u_h )\,dx = \int_{\R^{n+1}}\,dx \int_{\R^{n+1}} g(\xi) \, d\nu_x(\xi)=\int_{B_R} g(\na u)\,.
  \]
  Recalling that
  \[
  \int_{B_R} F_h(\na u_h)^\alpha = \int_{B_R} F(\na u_h)^\alpha + o(1)\qquad\mbox{as $h\to\infty$}
  \]
  and setting $g=F^\a$, the claim follows.

  \bigskip

  \noindent {\it Step six}: We prove that if $\a>0$, then
  \begin{equation}\label{eq:pohozaevalpha}
  \frac{n+1+\alpha}{(n+1)^2}\int_{\R^{n+1}} F(\nabla u)^\alpha=\frac{n+2+\alpha}{n+1}\int_{\R^{n+1}} F(\nabla u)^{\alpha+1}.
  \end{equation}
  Indeed, let us consider the vector field
  \begin{equation*}
  \theta_{\a,h}=F_h(\nabla u_h)^\alpha\,\nabla_{F_h}u_h\,.
  \end{equation*}
Recalling that $\nabla_{F_h}u_h=F_h(\na u_h)\na F_h(\na u_h)$, we have
\begin{eqnarray*}
\nabla \big(F_h(\nabla u_h)^{\alpha+1}\big)\cdot\big(\nabla_{F_h}u_h\big)
=
(\alpha+1)F_h(\nabla u_h)^{\alpha+1} \na^2 u_h[\nabla F_h(\nabla u_h),\nabla F_h(\nabla u_h)]
\end{eqnarray*}
so that $\Div(\nabla_{F_h}u_h)=1$ yields
\begin{eqnarray*}
\Div \theta_{\a+1,h}&=&F_h(\nabla u_h)^{\alpha+1}+(\alpha+1)F_h(\nabla u_h)^{\alpha+1} \na^2 u_h[\nabla F_h(\nabla u_h),\nabla F_h(\nabla u_h)]
\\
&=& \bigg(1+\frac{\alpha+1}{n+1}\bigg)F_h(\nabla u_h)^{\alpha+1}
\\
&&+ (\alpha+1)\, F_h(\nabla u_h)^{\alpha+1}\,
\bigg(\nabla^2u_h-\frac{\na^2[(F_h)_*/2]^2(\na {F_h}(\na u_h))}{n+1}\bigg)[\nabla F_h(\nabla u_h),\nabla F_h(\nabla u_h)],
\end{eqnarray*}
where we have used the identity
\[
\na^2[H^2/2](\nu)[\nu,\nu]=1\qquad\mbox{with $H=(F_h)_*$ and $\nu=\nabla F_h(\nabla u_h)$}\,.
\]
By \eqref{estimate 1b recall}, and taking into account the Lipschitz bound \eqref{eqn: lip bd}, we find
\begin{equation}\label{eq:divofthetaalpha+1}
\int_{\Omega_h}\Div \theta_{\a+1,h}=\frac{n+2+\alpha}{n+1}\int_{\Om_h} F_h(\nabla u_h)^{\alpha+1}+o(1)\qquad\mbox{as $h\to\infty$}\,.
\end{equation}
At the same time, by the divergence theorem, the Lipschitz bound, and \eqref{estimate 2 x}, we find
\begin{eqnarray*}
\int_{\Omega_h}\Div\, \theta_{\a+1,h}
&=&
\int_{\partial\Omega_h} F_h(\nabla u)^{\alpha+1} \nabla_{F_h}u\cdot\nu_{\Om_h} \\\nonumber
&=&
\frac{1}{n+1}\int_{\partial\Omega_h} F_h(\nabla u)^{\alpha} \nabla_{F_h}u\cdot\nu_{\Om_h}
\\
&&+\int_{\partial\Omega_h}\left(F_h(\nabla u)-\frac{1}{n+1}\right) F_h(\nabla u)^{\alpha} \nabla_{F_h}u\cdot\nu_{\Om_h}
\\
\nonumber
&=&\frac{1}{n+1}\int_{\partial\Omega_h} F_h(\nabla u)^{\alpha} \nabla_{F_h}u\cdot\nu_{\Om_h}+o(1)
\\\nonumber
&=&\frac{1}{n+1}\int_{\Omega_h}\Div\,\theta_{\a,h}+o(1)\,,\qquad\mbox{as $h\to\infty$}\,.
\end{eqnarray*}
By using \eqref{eq:divofthetaalpha+1} with $\a$ and $\a+1$ we thus conclude that
\begin{equation}
  \label{previous}
  \frac{n+2+\alpha}{n+1}\int_{\Om_h} F_h(\nabla u_h)^{\alpha+1}=
\frac{n+1+\alpha}{(n+1)^2}\int_{\Om_h} F_h(\nabla u_h)^{\alpha}+o(1)\qquad\mbox{as $h\to\infty$}
\end{equation}
which implies \eqref{eq:pohozaevalpha} thanks to \eqref{eq:convergence of Fhalpha} if $\a>0$. Notice that \eqref{previous} holds also when $\a=0$ and that, in this case, it implies
\begin{equation}
  \label{eq:forvolume}
  \frac{n+2}{n+1}\int_{\R^{n+1}} F(\nabla u)=\frac{|\Om|}{n+1}\ge \frac{|\{u<0\}|}{n+1}\,,
\end{equation}
where the limit on the left-hand side is computed by \eqref{eq:convergence of Fhalpha}, the limit  on the right-hand side follows by $|\Om_h\Delta\Om|\to 0$, and the inequality is a consequence of \eqref{inclusion x}.

\bigskip

\noindent {\it Step seven}: We finally complete the proof. We recall from \eqref{formula x} that $\{u<0\}$ is decomposed in at most countably many open connected components $A_j=x_j+s_j\,K_F$, $j\in J$, with $u(x)=(F_*(x-x_j)^2-s_j^2)/2(n+1)$ for $x\in A_j$. Hence, by \eqref{F and its gauge},
\[
F(\nabla u(x))=\frac{F(F_*(x-x_j)\,\nabla F_*(x-x_j))}{n+1}=\frac{F_*(x-x_j)}{n+1}\qquad \forall x\in A_j\,,
\]
and by scaling, we have that
\begin{equation}\label{eq:exact}
\int_{\R^{n+1}} F(\nabla u)^\alpha=\frac{1}{(n+1)^{\alpha}}\sum_{j\in J}s_j^{n+1+\alpha}\int_{K_F} F_*^\alpha\,.
\end{equation}
Moreover, thanks to $K_F=\{F_*<1\}$, the coarea formula, and the zero-homogeneity of $\nabla F_*$,
\[
\int_{K_F} F_*^\alpha=\int_0^1\,t^\a\,dt\int_{\{F_*=t\}}\frac{d\H^n}{|\nabla F_*|}
=\int_0^1\,t^{n+\a}\,dt\int_{\pa K_F}\frac{d\H^n}{|\nabla F_*|}=\frac{C_K}{n+1+\a}
\]
provided we set
\[
C_K=\int_{\partial K_F}\frac{d\H^n}{|\nabla F_*|}\,.
\]
Hence, \eqref{eq:exact} becomes
\begin{equation}\label{eq:exact2}
\int_{\R^{n+1}} F(\nabla u)^\alpha = \frac{C_K}{(n+1)^\alpha (n+1+\alpha)}\sum_{j\in J}s_j^{n+1+\alpha}\,.
\end{equation}
Combining  \eqref{eq:pohozaevalpha} with \eqref{eq:exact2}, we find that
\begin{eqnarray*}
\frac{C_K}{(n+1)^{\alpha+2}}\sum_{j\in J}s_j^{n+1+\alpha} =\frac{n+1+\alpha}{(n+1)^2}\int_{\R^{n+1}} F(\nabla u)^\alpha &=&
\frac{n+2+\alpha}{n+1}\int_{\R^{n+1}} F(\nabla u)^{\alpha+1}\\
&=&\frac{C_K}{(n+1)^{\alpha+2}}\sum_{j\in J}s_j^{n+2+\alpha}\,,
\end{eqnarray*}
that is
\[
\sum_{j\in J}s_j^{n+1+\alpha}=\sum_{j\in J}s_j^{n+2+\alpha}\qquad\forall \a>0\,.
\]
In particular, by the arbitrariness of $\a$,
\begin{equation*}
\sum_{j\in J}s_j^{n+3+\alpha}=\sum_{j\in J}s_j^{n+2+\alpha}=\sum_{j\in J}s_j^{n+1+\alpha}\,,\qquad\forall\a>0\,,
\end{equation*}
and thus
\begin{equation*}
\sum_{j\in J}s_j^{n+1+\alpha}(s_j-1)^2=\sum_{j\in J}s_j^{n+3+\alpha}+s_j^{n+1+\alpha}-2\,s_j^{n+2+\alpha}=0\,.
\end{equation*}
This implies that $s_j=1$ for every $j\in J$. Using \eqref{eq:forvolume}, we get
\begin{equation}
\frac{C_K}{(n+1)^2}\sum_{j\in J}s_j^{n+2}=\frac{n+2}{n+1}\int_{\R^{n+1}} F(\nabla u) =\frac{|\Omega|}{n+1}\ge \frac{|\{u<0\}|}{n+1}=\frac{C_K}{(n+1)^2}\sum_{j\in J}s_j^{n+1}\,,
\end{equation}
which, combined with $s_j=1$ for every $j\in J$, implies the finiteness of $J$ as well as $|\Omega|=|\{u<0\}|$. In particular, \eqref{inclusion x} implies
  \[
  \Om=\{u<0\}
  \]
  up to modifying $\Om$ in a set of Lebesgue measure zero, which proves the conclusion that $|\Om_h\Delta\Om|\to 0$ for an open set $\Om$ consisting of a union of finitely many disjoint translations of $K_F$. In turn, from this last property, we have $(n+1)|\Om|=\F(\Om)$, so $(n+1)|\Om_h|=\F_h(\Om_h)$ implies $\F_h(\Om_h)\to\F(\Om)$ as $h\to\infty$. Finally, $\F_h(\Om_h)\le (L+\s)\,\F_h(K_{F_h})$ gives
  \[
  \#\,J\,\F(K_F)=\F(\Om)\le (L+\s)\F(K_F)
  \]
  so $\#\,J\le L$. This completes the proof of Theorem \ref{thm main}.

  \section{Proof of Theorem \ref{thm locmin}}\label{section locmin}

  We start by recalling our assumptions. We consider a smooth elliptic integrand $F$ and a smooth potential $g$. We let $M>0$ and consider an open connected set with smooth boundary $\Om$ such that
  \begin{equation}
    \label{hp locmin}
      \frac{\F(\Om)^{n+1}}{|\Om|^n}\le M\,,\qquad\frac{\diam(\Om)}{|\Om|^{1/(n+1)}}\le M\,,\qquad \Om\subset B_M\,.
  \end{equation}
  In case (i) we assume that $\Om$ is a volume-constrained critical point of $\E$, so that, by smoothness and by the area formula, there exists a constant $\ell$ such that
  \begin{equation}
    \label{critical point of E}
    H^F_\Om+g=\ell\qquad\mbox{on $\pa\Om$}\,.
  \end{equation}
  Then a first variation argument allows one to compute that
  \begin{equation}
      \label{formula for lambda}
          \ell=H^{F,0}_\Om+\frac1{(n+1)|\Om|}\int_\Om\,\Div(g(x)\,x)\,dx\,,
  \end{equation}
  see e.g. \cite[Appendix A.1]{FigalliMaggiARMA}. Let us now set
  \[
  \Om^*=t\,\Om\,,\qquad t=\frac{H^{F,0}_\Om}n=\frac{\F(\Om)}{(n+1)|\Om|}\,.
  \]
  By \eqref{hp locmin} we easily find
  \[
  H^{F,0}_{\Om^*}=n\,,\qquad \F(\Om^*)\le M\,,\qquad \diam(\Om^*)\le M^{(n+2)/(n+1)}\,.
  \]
  By the Wulff inequality \eqref{wulff inequality},
  \begin{equation}
    \label{via 3}
      \frac1{t}=\frac{(n+1)|\Om|}{\F(\Om)}\le C(n,F)\,|\Om|^{1/(n+1)}
  \end{equation}
  so that $\Om\subset B_M$, \eqref{critical point of E} and \eqref{formula for lambda} imply
  \begin{equation}
  \label{deficit critical}
  \Big\|\frac{H^F_\Om}{H^{F,0}_\Om}-1\Big\|_{C^0(\pa\Om)}\le C(n,F)\,\Big(\|g\|_{C^0(B_M)}+M\,\|\nabla g\|_{C^0(B_M)}\Big)\,|\Om|^{1/(n+1)}\,,
  \end{equation}
  and thus
  \[
  \de_F(\Om^*)=\de_F(\Om)\le C(n,F,M,g)\,|\Om|^{1/(n+1)}\,.
  \]
  By Theorem \ref{thm main1}, for every $\e>0$ there exists $v_\e$ depending on $n$, $F$, $g$, $M$, and $\e$, such that if $|\Om|\le v_\e$, then
  \begin{equation}
    \label{vicino L}
      \Big|\Om^*\Delta \bigcup_{i=1}^L(x_i+K_F)\Big|\le\e\,,
  \end{equation}
  for some $L\ge 1$ (bounded from above in terms of $F$ and $M$) and $\{x_i\}_{i=1}^L\subset\R^{n+1}$ such that the sets $\{x_i+K_F\}_{i=1}^L$ are mutually disjoint. This proves statement (i).

  We now assume that $\Om$ is a volume-constrained $r_0$-local minimizer of $\E$, that is
  \begin{equation}
    \label{locminproof 1}
      \E(\Om)\le\E(A)\qquad\mbox{whenever $|A|=|\Om|$ and $\Om\Delta A\cc I_{r_0\,|\Om|^{1/(n+1)}}(\pa\Om)$}\,.
  \end{equation}
  We shall apply statement (i) with a choice of $\e$ depending on $r_0$, $n$, $F$, $g$ and $M$, and then we shall assume $|\Om|\le v_0$ for a suitable $v_0$ depending on $r_0$, $n$, $F$, $g$ and $M$. We now divide the argument into steps.

  \bigskip

  \noindent {\it Step one}: We claim that there exist constants $\rho_0$ (depending on $n$, $F$, $g$, $M$ and $r_0$) and $S$ (depending on $n$, $F$, and $M$) such that $\Om^*$ is a $(S,\rho_0)$-minimizer of $\F$ in $\R^{n+1}$, that is,
  \begin{equation}
    \label{rho0S minimizer}
      \F(\Om^*)\le\F(G)+S\,|G\Delta\Om^*|\qquad\mbox{whenever $\diam(G\Delta\Om^*)<\rho_0$}\,.
  \end{equation}
  (This will be used in the next step to get density estimates, see \eqref{volume density estimates} below.) In proving \eqref{rho0S minimizer} we can assume without loss of generality that
  \begin{eqnarray}
    \label{on G}
    \F(G)\le\F(\Om^*)\,,\qquad G\Delta\Om^*\cc B_{\rho_0}(x_0)\qquad\mbox{for some $x_0\in\pa\Om^*$}\,.
  \end{eqnarray}
  We first show that, if we let
  \[
  \mu=\Big(\frac{|\Om^*|}{|G|}\Big)^{1/(n+1)}
  \]
  then we have
  \begin{equation}
    \label{mu minus 1}
      |\mu-1|\le C(n)\,\frac{|\Om^*\Delta G|}{|\Om^*|}\qquad\mbox{and}\qquad|\mu-1|\le C(n,F)\,\rho_0^{n+1}\,.
  \end{equation}
  Indeed, by \eqref{vicino L} and provided $\e\le |K_F|/2$, we have
  \begin{equation}
    \label{volume Omega star}
      \Big(L-\frac{1}2\Big)|K_F|\le |\Om^*|\le (L+1)\,|K_F|\,.
  \end{equation}
 So, if $\rho_0$ is small enough to have $|B_{\rho_0}|\le |K_F|/4$, then \eqref{on G} implies
  \[
  |G|\ge|\Om^*|-|B_{\rho_0}|\ge \Big(L-\frac34\Big)\,|K_F|\ge \frac{|\Om^*|}{8}\,,\qquad\mbox{and}\qquad |G|\le |\Om^*|+\frac{|K_F|}4\le \frac32\,|\Om^*|\,,
  \]
  hence
  \[
  |\mu-1|\le C(n)\,\frac{|\Om^*\Delta G|}{|G|}\le C(n)\,\frac{|\Om^*\Delta G|}{|\Om^*|}\,,
  \]
  that is, the first estimate in \eqref{mu minus 1}. The second estimate immediately follows by combining the first one with \eqref{volume Omega star} and $|\Om^*\Delta G|\le|B_{\rho_0}|$. Now we set, for $x_0\in\pa\Om^*$ as in \eqref{on G},
  \[
  A^*=\mu\,(G-x_0)+x_0\,,\qquad A=\frac{A^*}t\,,
  \]
  and claim that $A$ is admissible in \eqref{locminproof 1}. By definition of $\mu$ we have $|A^*|=|\Om^*|$, so that $\Om^*=t\,\Om$ implies $|A|=|\Om|$. We thus need to check that
  \[
  \Om\Delta A\cc I_{r_0\,|\Om|^{1/(n+1)}}(\pa\Om)\,.
  \]
  We first claim that
  \begin{equation}
    \label{via}
    \Om^*\Delta A^*\cc I_{2\,\rho_0}(\pa\Om^*)\,.
  \end{equation}
  The argument is entirely elementary, but we include it for the sake of clarity. Let us first pick $y\in A^*\setminus\Om^*$ and let $z\in G$ be such that $y=\mu(z-x_0)+x_0$. If $z\in B_{\rho_0}(x_0)$, then
  \[
  \dist(y,\pa\Om^*)\le |y-x_0|\le \mu\,|z-x_0|\le \mu\,\rho_0\le 2\rho_0\,,
  \]
  where we have used the second inequality in \eqref{mu minus 1} and have assumed $\rho_0$ small enough to get $\mu\le 2$. On the other hand, if $z\not\in B_{\rho_0}(x_0)$, then $G\setminus B_{\rho_0}(x_0)=\Om^*\setminus B_{\rho_0}(x_0)$ implies that $z\in\Om^*$, so that a point on the segment joining $y$ and $z$ lies on $\pa\Om^*$, and
  \[
  \dist(y,\pa\Om^*)\le |z-y|= |\mu-1|\,|z-x_0|\le\diam(\Om^*)\,|\mu-1|\le \rho_0\,,
  \]
  where we have used again the second inequality in \eqref{mu minus 1} and the smallness of $\rho_0$. This proves that
  \[
  \dist(A^*\setminus\Om^*,\pa\Om^*)\le\rho_0\,.
  \]
  Now let us pick $y\in\Om^*\setminus A^*$. If $y\not\in G$, then, by \eqref{on G}, $y\in B_{\rho_0}(x_0)$ and thus $\dist(y,\pa\Om^*)\le\rho_0$. Otherwise, $y\in G\setminus A^*$, which means that the point $z$ defined by $y=\mu(z-x_0)+x_0$ cannot lie in $G$. Thus the segment joining $z$ and $y$ meets a point in the boundary of $G$, so that, again by \eqref{mu minus 1},
  \[
  \dist(y,\pa G)\le |y-z|=|\mu-1|\,|z-x_0|=|\mu-1|\,\frac{|y-x_0|}\mu\le 2\,|\mu-1|\,|y-x_0|\le 2\, \diam(\Om^*)\,|\mu-1|\le \rho_0\,,
  \]
  provided $\rho_0$ is small enough. By $\pa G\subset I_{\rho_0}(\pa\Om^*)$ we get
  \[
  \dist(y,\pa \Om^*)\le\rho_0+\dist(y,\pa G)\,,
  \]
  and thus $\dist(\Om^*\setminus A^*,\pa\Om^*)\le2\rho_0$. The proof of \eqref{via} is complete.

  By $|\Om^*|=|A^*|$, \eqref{via}, $\Om^*=t\,\Om$ and $A^*=t\,A$ we obtain that
  \begin{equation}
    \label{via 2}
      |\Om|=|A|\qquad \Om\Delta A\cc I_{2\rho_0/t}(\pa\Om)\,.
  \end{equation}
  By \eqref{via 3}, if $\rho_0$ is small enough with respect to $r_0$, $n$, $M$ and $F$, then \eqref{via 2} implies that $A$ is admissible in \eqref{locminproof 1}. By $\E(\Om)\le\E(A)$, and assuming without loss of generality that $r_0\le M$, we deduce that
  \begin{eqnarray}\label{via 4}
    \F(\Om)\le\F(A)+\|g\|_{C^0(B_{2M})}\,|\Om\Delta A|\,.
  \end{eqnarray}
 Multiplying by $t^n$, this becomes
  \begin{eqnarray*}
  \F(\Om^*)&\le&\F(A^*)+\frac{\|g\|_{C^0(B_{2M})}}t\,|\Om^*\Delta A^*|\le \F(A^*)+C(n,F)\,\|g\|_{C^0(B_{2M})}\,v_0^{1/(n+1)}\,|\Om^*\Delta A^*|
    \\
    &\le&
    \F(A^*)+\,|\Om^*\Delta A^*|
  \end{eqnarray*}
  where we have used first \eqref{via 3}, and then the smallness of $v_0$. Now, $\F(A^*)=\mu^n\,\F(G)$ so that by the first estimate in \eqref{mu minus 1} and by $\F(\Om^*)=(n+1)\,|\Om^*|$, we obtain
  \begin{eqnarray}\nonumber
  \F(A^*)&\le&\Big(1+C(n,F,M)\,\frac{|\Om^*\Delta G|}{|\Om^*|}\Big)\F(G)
  \le\F(G)+C(n,F,M)\,\frac{\F(\Om^*)}{|\Om^*|}\,|\Om^*\Delta G|
    \\\label{via 5}
    &\le&\F(G)+C(n,F,M)\,|\Om^*\Delta G|\,.
  \end{eqnarray}
  Similarly, by $\F(G)\le\F(\Om^*)\le M$ and by \cite[Lemma 17.9]{maggiBOOK},
  \[
  |G\Delta A^*|\le C(n,M)\,|\mu-1|\,P(G)\le C(n,F,M)\,|\mu-1|\,\F(G)\le C(n,F,M)\,|\Om^*\Delta G|\,,
  \]
  so $|\Om^*\Delta A^*|\le C(n,F,M)\,|\Om^*\Delta G|$ by triangular inequality. Combining this last estimate with \eqref{via 3}, \eqref{via 4} and \eqref{via 5} we conclude that \eqref{rho0S minimizer} holds.

  \bigskip

  \noindent {\it Step two}: We show that $L=1$ in \eqref{vicino L}. Starting from \eqref{rho0S minimizer}, and assuming without loss of generality that $\rho_0\le 1/S$, a standard argument shows the existence of  $\k\in(0,1)$ depending on $F$ and $n$ only such that
  \begin{equation}
    \label{volume density estimates}
      \k\,|B_\rho(x)|\le|\Om^*\cap B_\rho(x)|\le(1-\k)\,|B_\rho(x)|\,,\qquad\forall x\in\pa\Om^*\,,\rho<\rho_0\,.
  \end{equation}
  (See, e.g., \cite[Theorem 21.11]{maggiBOOK} for the case when $\F$ is the classical perimeter and use the constants $m_F$ and $M_F$ bounding the anisotropy to adapt the proof to the anisotropic case.)

  Let us introduce the shorthand $K_0 =\bigcup_{i=1}^L(x_i+K_F)$, and let $x\in\pa\Om^*$ be such that
  \[
  h_0=\dist(x,\pa K_0)=\sup_{z\in \pa \Om^*}\dist(z,\pa K_0)\,.
  \]
 If $x \in K_0^c,$
  then by the lower density estimate in \eqref{volume density estimates} and \eqref{vicino L},
  \[
  \k\,|B_{\min\{h_0,\rho_0\}}(x)|\le|\Om^*\cap B_{\min\{h_0,\rho_0\}}(x)|\le|\Om^*\setminus K_0|\le \e\,.
  \]
  If instead $x \in K_0$, then the analogous argument using the upper density estimate in \eqref{volume density estimates} shows that $ \k\,|B_{\min\{h_0,\rho_0\}}(x)|\le \e$ in this case as well.
  Taking $\e$ small enough with respect to $\rho_0$ and $\k$, and therefore with respect to $r_0$, $n$, $F$, $g$ and $M$, we find that $h_0<\rho_0$ and in particular that
  \[
  h_0^{n+1}\le C(n,F)\,\e\,.
  \]
  Furthermore, up to possibly decreasing $v_0$, we find that
  \[
  h_1^{n+1} = \sup_{z\in\pa K_0}\dist(z,\pa\Om)\le C(n,F)\,\e\,
  \]
  as well. Indeed, otherwise, we could find $r>0$, a sequence $\{\Om_h\}$ converging to $K_0$ in $L^1$ and $x\in K_0$ such that $B_{r}(x) \cap \pa \Om_h$ is empty for $h$ sufficiently large. Clearly, $K_0$  satisfies a lower perimeter density estimate. Pairing this estimate with the ellipticity of $F$ and the lower semi-continuity of $\F$ implies that
  \[
  cr^n \leq \F(K_0; B_r(x)) \leq \liminf_{h\to\infty} \F(\Om_h; B_r(x)) = 0,
  \]
  yielding a contradiction.
   We conclude that
  \begin{equation}
    \label{hd vs vol}
    h=\hd(\pa\Om^*,\pa K_0)\le C(n,F)\,\e^{1/(n+1)}\,.
  \end{equation}

Since $\Om$ is connected, so is $\Om^*\subset\bigcup_{i=1}^LI_h(x_i+K_F)$. Hence, if $L\ge 2$, then (up to relabeling the $x_i$s) we have
   \[
  I_h(x_1+K_F)\cap I_h(x_2+K_F)\ne\emptyset\,.
  \]
In particular,  if $z_1\in x_1+\pa K_F$ and $z_2\in x_2+\pa K_F$ are such that $|z_1-z_2|=\dist(x_1+K_F,x_2+K_F)$, then $|z_1-z_2|<2h\le C(n,F)\,\e^{1/(n+1)}$. Moreover, by \eqref{hd vs vol} there exists $x\in\pa\Om^*$ such that $|z_1-x|<C(n,F)\,\e^{1/(n+1)}$. Thus there exist $w_1$ and $w_2$ such that $w_1+(x_1+K_F)$ and $w_2+(x_2+K_F)$ are disjoint, with a common boundary point at $x$  and
  \begin{equation}
    \label{w1w2}
      \max\{|w_1|,|w_2|\}\le C(n,F)\,\e^{1/(n+1)}\,.
  \end{equation}
  By the upper estimate in \eqref{volume density estimates}, we find that for every $\rho<\rho_0$
  \begin{eqnarray}\nonumber
    \k\,|B_{\rho/2}(x)|&\le&|B_{\rho/2}(x)\setminus\Om^*|\le
    \Big|B_{\rho/2}(x)\setminus K_0\Big|+\e
    \\\nonumber
    &\le&
    \Big|B_{\rho/2}(x)\setminus\big((x_1+K_F)\cup(x_2+K_F)\big)\Big|+\e
    \\\label{st}
    &\le&
    \Big|B_{\rho/2}(x)\setminus\big((w_1+x_1+K_F)\cup(w_2+x_2+K_F)\big)\Big|+C(n,F)\,\e^{1/(n+1)}\,,
  \end{eqnarray}
  where in the last step we have used \eqref{w1w2}. By definition of $w_1$ and $w_2$ and by smoothness of $K_F$,
  \[
  \lim_{\rho_0\to 0^+}\rho_0^{-1-n}\Big|B_{\rho_0/2}(x)\setminus\big((w_1+x_1+K_F)\cup(w_2+x_2+K_F)\big)\Big|=0\,.
  \]
  Hence if $\e$ is small enough with respect to $\rho_0$, \eqref{st} leads to a contradiction. This shows that $L=1$. As a consequence, $K_0= x_1 +K_F$ satisfies upper and lower volume density estimates, so arguing as above, we find that
  \begin{equation}
    \label{hd vs vol 2}
    \hd\Big(\pa\Om^*,x_1+\pa K_F\Big)\le C(n,F)\,\Big|\Om^*\Delta (x_1+K_F)\Big|^{1/(n+1)}\le C(n,F)\,\e^{1/(n+1)}\le\frac{r_0}{2\,C_*(n,F)}
  \end{equation}
  where the last inequality is obtained by further decreasing $\e$ in terms of $r_0$, $n$ and $F$, with $C_*(n,F)$ as in \eqref{via 3}, that is to say, by taking $t^{-1}\le C_*(n,F)\,|\Om|^{1/(n+1)}$. In this way, we obtain
  \begin{equation}\label{ttt}
  \hd\Big(\pa\Om,p_1+\pa(K_F/t)\Big)\le \frac{r_0}{2\,C_*(n,F)\,t}\le \frac{r_0}2\,|\Om|^{1/(n+1)}\,,
  \end{equation}
  where $p_1=x_1/t$. This fact will be used in the next step.

  \medskip

  \noindent {\it Step three}: We now prove \eqref{locmin L1 close} and \eqref{locmin hd close} by comparing $\Om^*$ with a scaling of the Wulff shape of the same volume. To this end, we introduce the scale invariant {\it Wulff deficit} of $\Om\subset\R^{n+1}$, defined by
  \[
  \de_{W}(\Om)=\frac{\F(\Om)}{(n+1)|K_F|^{1/(n+1)}|\Om|^{n/(n+1)}}-1\,.
  \]
  We first claim that
  \begin{equation}
    \label{delta eps}
    \de_W(\Om)\le C(n,F)\,|\Om^*\Delta(x_1+K_F)|\le C(n,F)\,\e\,.
  \end{equation}
  Indeed, $\F(K_F)=(n+1)|K_F|$ and $\F(\Om^*)=(n+1)|\Om^*|$ (as $H^{F,0}_{\Om^*}=n$), so that
  \[
  |\F(\Om^*)-\F(K_F)|\le (n+1)\big||\Om^*|-|K_F|\big|\le (n+1)\,|\Om^*\Delta(x_1+K_F)|\,,
  \]
  while
  \begin{eqnarray*}
  \big|\F(K_F)-(n+1)|K_F|^{1/(n+1)}|\Om^*|^{n/(n+1)}\big|&\le& C(n,F)\,\big||K_F|^{n/(n+1)}-|\Om^*|^{n/(n+1)}\big|
  \\
  &\le& C(n,F)\,\big||K_F|-|\Om^*|\big|^{n/(n+1)}
  \\
  &\le& C(n,F)\,|\Om^*\Delta(x_1+K_F)|\,,
  \end{eqnarray*}
  thanks also to \eqref{volume Omega star}.
  Again using \eqref{volume Omega star} and the scaling invariance of $\de_W(\Om)$, we deduce \eqref{delta eps}.

  This said, let $s=(|K_F|/|\Om|)^{1/(n+1)}$. By \eqref{ttt},
  \[
  \hd\Big(\pa\Om,p_1+\pa (K_F/s)\Big)\le  \frac{r_0}2\,|\Om|^{1/(n+1)}+C(n,F)\,\Big|\frac1t-\frac1s\Big|
  \]
  where by \eqref{via 3} and \eqref{delta eps} and provided we take $\e$ small enough in terms of $r_0$,
  \begin{eqnarray}\label{via 6}
    \Big|\frac1t-\frac1s\Big|&=&\frac{\de_W(\Om)}t\le C(n,F)\,|\Om|^{1/(n+1)}\,\de_W(\Om)
    \\\nonumber
    &\le&\frac{r_0}2\,|\Om|^{1/(n+1)}\,.
  \end{eqnarray}
  We have thus shown that
  \[
  \hd\Big(\pa\Om,p_1+\pa (K_F/s)\Big)\le  r_0\,|\Om|^{1/(n+1)}\,,
  \]
  and since $|p_1+(K_F/s)|=|\Om|$, this implies that $A=p_1+(K_F/s)$ is an admissible competitor in \eqref{locminproof 1}.

  We can thus obtain \eqref{locmin L1 close} by the quantitative Wulff inequality of \cite{FigalliMaggiPratelliINVENTIONES} by direct comparison with the Wulff shape, as in the case of global minimizers \cite{FigalliMaggiARMA}. We repeat the simple argument for the convenience of the reader. By \eqref{locminproof 1}, we get $\E(\Om)\le\E(K_F/s)$, which in turn implies
  \begin{eqnarray}\nonumber
  \de_W(\Om)&=&\frac{\F(\Om)}{\F(K_F/s)}-1\le \frac1{\F(K_F/s)}\,\Big|\int_\Om g-\int_{p_1+(K_F/s)}g\Big|
  \\\label{via 7}
  &\le&\frac{C(n,F)}{|\Om|^{n/(n+1)}}\,\|g\|_{C^0(B_{2M})}\,|\Om\Delta (p_1+(K_F/s))|\,.
  \end{eqnarray}
  Notice that $x_1=t\,p_1$ could have actually been chosen so to satisfy
  \[
  |\Om\Delta (p_1+(K_F/s))|=\min_{p\in\R^{n+1}}|\Om\Delta (p+(K_F/s))|\,.
  \]
  Correspondingly, by the quantitative Wulff inequality of \cite{FigalliMaggiPratelliINVENTIONES},
  \[
  \de_W(\Om)\ge c(n)\,\Big(\frac{|\Om\Delta (p_1+(K_F/s))|}{|\Om|}\Big)^2\,,
  \]
  so that
  \[
  \Big(\frac{|\Om\Delta (p_1+(K_F/s))|}{|\Om|}\Big)^2\le  C(n,F,g,M)\,\frac{|\Om\Delta (p_1+(K_F/s))|}{|\Om|}\,|\Om|^{1/(n+1)},
  \]
  and \eqref{locmin L1 close} follows. Returning to \eqref{hd vs vol 2} we have
  \begin{equation}
    \label{via 8}
      \hd\Big(\pa\Om,p_1+\pa(K_F/t)\Big)\le C(n,F)\,\Big|\Om\Delta (p_1+K_F/t)\Big|^{1/(n+1)}\,,
  \end{equation}
  while by \eqref{via 6},\eqref{via 7}, and \eqref{locmin L1 close},
  \begin{eqnarray*}
    \Big|\frac1t-\frac1s\Big|&\le& C(n,F)\,\de_W(\Om)\,|\Om|^{1/(n+1)}
  \\
  &\le&C(n,F,g,M)\,\frac{|\Om\Delta (p_1+(K_F/s))|}{|\Om|}\,|\Om|^{2/(n+1)}
  \le C(n,F,g,M)\,|\Om|^{3/(n+1)}\,
  \end{eqnarray*}
 By \eqref{via 3} and \cite[Lemma 17.9]{maggiBOOK}
  \begin{eqnarray*}
    \Big|(p_1+K_F/s)\Delta (p_1+K_F/t)\Big|\le C(F)\,\frac1{t^n}\,\Big|\frac1t-\frac1s\Big|\le C(n,F,g,M)\,|\Om|^{n/(n+1)}\,|\Om|^{3/(n+1)}\,,
  \end{eqnarray*}
  while
  \begin{eqnarray*}
  \hd\Big(p_1+\pa(K_F/t),p_1+\pa(K_F/s)\Big)\le C(F)\,\Big|\frac1t-\frac1s\Big| \le C(n,F,g,M)\,|\Om|^{3/(n+1)}\,.
  \end{eqnarray*}
  By \eqref{via 8}, we conclude
  \begin{eqnarray*}
    \frac{\hd\Big(\pa\Om,p_1+\pa(K_F/s)\Big)}{|\Om|^{1/(n+1)}}
    &\le&
    \frac{\hd\Big(\pa\Om,p_1+\pa(K_F/t)\Big)}{|\Om|^{1/(n+1)}}+\frac{\hd\Big(p_1+\pa(K_F/t),p_1+\pa(K_F/s)\Big)}{|\Om|^{1/(n+1)}}
    \\
    &\le& C(n,F)\,
    \Big(\frac{|\Om\Delta (p_1+K_F/t)|}{|\Om|}\Big)^{1/(n+1)}+ C(n,F,g,M)\,\frac{|\Om|^{3/(n+1)}}{|\Om|^{1/(n+1)}}
    \\
    &\le& C(n,F,g,M)\,|\Om|^{1/(n+1)^2}\,,
  \end{eqnarray*}
  where in the last inequality we have used \eqref{locmin L1 close}. This proves \eqref{locmin hd close}.

\bibliography{references}
\bibliographystyle{alpha}

\end{document}